\title{Non-vanishing theorem for generalized log canonical pairs with a polarization}
\author{Kenta Hashizume}
\date{2020/12/30}
\keywords{generalized abundance, non-vanishing theorem, generalized lc pairs}
\subjclass[2010]{14E30, 14J17, 14J40}
\address{Graduate School of Mathematical Sciences, 
The University of Tokyo, 3-8-1 Komaba Meguro-ku Tokyo 153-8914, Japan}
\email{hkenta@ms.u-tokyo.ac.jp}
\newtheorem{thm}{Theorem}[section]
\newtheorem{lem}[thm]{Lemma}
\newtheorem{prop}[thm]{Proposition}
\newtheorem{ques}[thm]{Question}
\theoremstyle{definition}
\newtheorem{defn}[thm]{Definition}
\newtheorem{rem}[thm]{Remark}
\newtheorem{note}[thm]{Notation}
\newtheorem*{ack}{Acknowledgments} 
\newtheorem*{b-divisor}{b-divisors} 
\newtheorem*{sing}{Singularities of pairs} 
\newtheorem*{g-pair}{Generalized pairs} 
\newtheorem*{adj-g-pair}{Divisorial adjunction for generalized pairs} 
\newtheorem*{mmp-g-pair}{MMP for generalized pairs}
\newtheorem{step2}{Step}
\newtheorem{step3}{Step}
\newtheorem{step4}{Step}
\newtheorem{step5}{Step}
\newtheorem*{claim*}{Claim}
\begin{document}

\maketitle

\begin{abstract}
We prove that the non-vanishing conjecture holds for generalized lc pairs with a polarization. 
\end{abstract}

\tableofcontents

\section{Introduction}
We will work over an algebraically closed field of characteristic zero. 

In this paper, we deal with generalized pairs, which is an extended notion of log pairs. 
The notion was introduced by Birkar--Zhang \cite{bz} to prove the effectivity of the Iitaka fibration in some sense. 
In \cite{bz}, Birkar and Zhang proved effectivity and boundedness of several invariants, called the effective birationality, the ACC for generalized lc thresholds, and the global ACC. 
These results played critical roles to prove the main result of \cite{bz}. 
After that, applying them, Birkar proved the boundedness of complements \cite{birkar-compl} and the boundedness of Fano varieties with a fixed dimension and mild singularity \cite{birkar-bab}, which led us to a significant development in  birational geometry. 
Currently, generalized pairs are also used as powerful tools to prove results of  log pairs (\cite{moraga-mmp}, \cite{lt}, \cite{birkar-connectedness}, \cite{filipazzi-svaldi}).   

Generalized pairs are also interesting objects to study their geometry. 
The structure of generalized pairs naturally appears in the base variety of lc-trivial fibrations or Iitaka fibrations, higher codimensional adjunction formulas, and cones over lc varieties with nef anti-canonical divisors. 
A lot of results in birational geometry, in particular the minimal model program and the canonical bundle formula, is extended to the context of generalized pairs (\cite{hacon-moraga}, \cite{filcanbundleformula}, \cite{hanli}, \cite{hanliu-nonvanish}, \cite{hanliu-can-bundle-formula}, \cite{liu-sarkisov}). 
For the current status and open problems, see \cite{birkar-gen-pair} by Birkar. 

In this paper, we study the non-vanishing conjecture for generalized lc pairs. 
In the case of log pairs, the non-vanishing conjecture for lc pairs predicts that pseudo-effective log canonical divisor of an lc pair is effective up to $\mathbb{R}$-linear equivalence. 
The conjecture is one of the most important open problems in minimal model theory. 
By a similar way to the case of log pairs, we can define generalized lc pairs, generalized klt pairs, and so on, then we can formulate generalized lc pairs analogue of the non-vanishing conjecture. 
However, unfortunately, a generalized lc pair on an elliptic curve is a counterexample to the non-vanishing conjecture for generalized lc pairs. 
Hence, a weaker version of the conjecture, called weak non-vanishing conjecture (cf.~\cite[Question 3.5]{bhzariski}), is expected to hold. 
In addition to it, in general, the class of generalized lc pairs are strictly larger than the class of lc pairs. 
More precisely, there exists a normal projective variety having a structure of generalized lc pair but not having any boundary divisor with which the log pair is lc (\cite[Remark 4.13 (1)]{has-class}). 
So the weak non-vanishing conjecture for generalized lc pairs does not immediately follow from the non-vanishing conjecture for lc pairs. 


Though we have the above obstructions, the following main result of this paper shows that the non-vanishing conjecture for generalized lc pairs holds true when we consider generalized lc pairs with a polarization.

\begin{thm}\label{thm--abund-main-2}
Let $(X,B,\boldsymbol{\rm M})$ be a projective generalized lc pair such that $\boldsymbol{\rm M}$ is a finite $\mathbb{R}_{>0}$-linear combination of b-nef $\mathbb{Q}$-b-Cartier $\mathbb{Q}$-b-divisors. 
Let $A$ be an ample $\mathbb{R}$-divisor on $X$. 
Suppose that $K_{X}+B+A+\boldsymbol{\rm M}_{X}$ is pseudo-effective. 
Then followings hold true.
\begin{enumerate}[(1)]
\item \label{thm--abund-main-2-(1)}
There exists an effective $\mathbb{R}$-divisor $D$ on $X$ such that $K_{X}+B+A+\boldsymbol{\rm M}_{X} \sim_{\mathbb{R}}D$. 
\item \label{thm--abund-main-2-(2)}
Suppose further that $\boldsymbol{\rm M}_{X}$ is $\mathbb{R}$-Cartier. 
Then, for every real number $\alpha \geq 1$, there exists an effective $\mathbb{R}$-divisor $D_{\alpha}$ on $X$ such that $K_{X}+B+A+\alpha \boldsymbol{\rm M}_{X} \sim_{\mathbb{R}}D_{\alpha}$.
\end{enumerate}
\end{thm}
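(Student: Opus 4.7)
The strategy is to prove (1) first by reducing to a generalized lc pair with a big boundary (via a Kodaira-type perturbation of the ample polarization) and then running a generalized MMP, and to deduce (2) from (1) by a similar perturbation of the polarization.

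For (1), I would use the ample polarization $A$ to enlarge the boundary to a big divisor without destroying generalized lc: writing $A=\epsilon A+(1-\epsilon)A$ with $\epsilon>0$ small and using Kodaira's lemma to decompose $(1-\epsilon)A\sim_{\mathbb{R}}\Theta$ for an effective $\mathbb{R}$-divisor $\Theta$ with arbitrarily small coefficients (taking general sections of a large ample multiple), we rewrite $K_X+B+A+\boldsymbol{\rm M}_X\sim_{\mathbb{R}} K_X+(B+\Theta)+\epsilon A+\boldsymbol{\rm M}_X$ with $(X,B+\Theta,\boldsymbol{\rm M})$ still generalized lc and $B+\Theta$ now big. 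I would then run a $(K_X+(B+\Theta)+\epsilon A+\boldsymbol{\rm M}_X)$-MMP with scaling of an ample divisor; termination follows from existence of minimal models for generalized lc pairs with big boundary, which is available in the recent generalized MMP literature. On the resulting minimal model, the log-big structure — combined with the generalized base-point-free theorem in the generalized klt sub-case, and induction on dimension via divisorial adjunction to a generalized lc center together with a Kawamata--Viehweg-type lifting in the non-klt case — upgrades nefness to semi-ampleness, producing an effective divisor which we pull back to $X$ via the negativity lemma.

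The main obstacle is the lifting step in the non-klt generalized lc case: extracting sections on $X$ from the non-vanishing inductively obtained on a generalized lc center requires careful control of the interaction of the moduli part $\boldsymbol{\rm M}$ with the non-klt locus, and it is here that the ample polarization is essential — without it, the statement fails, as the elliptic curve counterexample recalled in the introduction shows.

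For (2), I would apply (1) with polarization $A':=A+(\alpha-1)\boldsymbol{\rm M}_X$. The hypothesis that $\boldsymbol{\rm M}_X$ is $\mathbb{R}$-Cartier makes $(\alpha-1)\boldsymbol{\rm M}_X$ a nef $\mathbb{R}$-Cartier divisor, so $A'$ is numerically ample. To match the literal ampleness assumed in (1), decompose $A'=\epsilon A+[(1-\epsilon)A+(\alpha-1)\boldsymbol{\rm M}_X]$; the bracket is ample, hence big, and by Kodaira's lemma is $\mathbb{R}$-linearly equivalent to a general effective divisor $F$ with small coefficients, which can be absorbed into the boundary so that $(X,B+F,\boldsymbol{\rm M})$ remains generalized lc. Since $K_X+(B+F)+\epsilon A+\boldsymbol{\rm M}_X\sim_{\mathbb{R}} K_X+B+A+\alpha\boldsymbol{\rm M}_X$ is pseudo-effective (the sum of the given pseudo-effective divisor and the nef $(\alpha-1)\boldsymbol{\rm M}_X$), applying (1) to $(X,B+F,\boldsymbol{\rm M})$ with polarization $\epsilon A$ produces the required $D_\alpha$.
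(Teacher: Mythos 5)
Both parts of your proposal have genuine gaps, and in each case the gap is precisely the difficulty the paper is organized around.

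For (1), your route is: make the boundary big, find a minimal model, and upgrade nefness to semi-ampleness via a generalized base-point-free theorem in the klt case and a Kawamata--Viehweg-type lifting from a generalized lc center in the non-klt case. Neither ingredient is available. The existence/termination of minimal models for generalized \emph{lc} pairs with big boundary is not something the paper can or does use (the author explicitly lists the existence of minimal models in this setting as expected future work, not an input), and more seriously, the base-point-free/lifting step requires a Kodaira-type vanishing theorem for generalized lc pairs, which is open and which the paper's own Theorem \ref{thm--van-example} shows actually \emph{fails} in exactly this polarized setting: there is a generalized klt pair and a Cartier divisor $D$ with $D-(K_{X}+B+t\boldsymbol{\rm M}_{X})$ ample and $H^{1}(X,\mathcal{O}_{X}(D))\neq\{0\}$. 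So the step you flag as ``the main obstacle'' is not a technical point to be handled carefully; it is the open abundance problem. The paper never proves semi-ampleness: it proves only that $K_{X}+B+A+\boldsymbol{\rm M}_{X}$ is \emph{abundant} ($\kappa_{\iota}=\kappa_{\sigma}$, Theorem \ref{thm--abund-sub}), which together with pseudo-effectivity already gives $\kappa_{\iota}\geq 0$. That is done by induction on dimension with no vanishing theorem: in the klt-type case one descends to the base of a fibration via weak semistable reduction and the canonical bundle formula for generalized pairs and invokes \cite{bchm} there (Proposition \ref{prop--genabklt}); in the non-klt case one runs an MMP to a Mori fiber space on which a component $S$ of $\llcorner B\lrcorner$ becomes horizontal, and reduces abundance of the total space to abundance of the restriction to $S$.

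For (2), the concrete error is the assertion that $(\alpha-1)\boldsymbol{\rm M}_{X}$ is nef, so that $A'=A+(\alpha-1)\boldsymbol{\rm M}_{X}$ is ample. Only the trace of $\boldsymbol{\rm M}$ on a sufficiently high model $\tilde{X}$ is nef; on $X$ one has $f^{*}\boldsymbol{\rm M}_{X}=\boldsymbol{\rm M}_{\tilde{X}}+\tilde{F}$ with $\tilde{F}\geq 0$ exceptional (negativity lemma), and $\boldsymbol{\rm M}_{X}$ is in general only pseudo-effective, not nef. Consequently $(1-\epsilon)A+(\alpha-1)\boldsymbol{\rm M}_{X}$ is merely big, and a member of its $\mathbb{R}$-linear system cannot be chosen general with arbitrarily small coefficients: it must carry the divisorial base locus coming from the negative part of $\boldsymbol{\rm M}_{X}$ with fixed multiplicities, so $(X,B+F,\boldsymbol{\rm M})$ need not be generalized lc and part (1) cannot be applied. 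This is exactly why the paper's proof of (2) does not deduce it from (1) on $X$ itself: it passes to a log resolution, introduces $\tilde{F}$, runs a $(K_{\tilde{X}}+\tilde{B}+\alpha'\boldsymbol{\rm M}_{\tilde{X}})$-MMP over $X$ to contract the effective part of $\tilde{E}-(\alpha-1)\tilde{F}$, and then uses the resulting effective divisor $(\alpha-1)\tilde{F}'-\tilde{E}'$ as the auxiliary divisor $C$ in Theorem \ref{thm--abund-sub} (this is also where the small perturbation $A_{\epsilon}\sim_{\mathbb{R}}\epsilon\boldsymbol{\rm M}_{X}+A$ enters, to retain an honest ample polarization). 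If $\boldsymbol{\rm M}_{X}$ happened to be nef your reduction of (2) to (1) would be fine, but that is an additional hypothesis not present in the theorem.
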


Note that the generalized pair $(X,B,\alpha \boldsymbol{\rm M})$ in Theorem \ref{thm--abund-main-2} (\ref{thm--abund-main-2-(2)}) may not be generalized lc. 
So Theorem \ref{thm--abund-main-2} (\ref{thm--abund-main-2-(2)}) is not a direct consequence of Theorem \ref{thm--abund-main-2} (\ref{thm--abund-main-2-(1)}). 
In the generalized klt case, we can reduce the theorem to the case of log pairs which follows from \cite{bchm}. 
When $\boldsymbol{\rm M}=0$, Theorem \ref{thm--abund-main-2} is the non-vanishing theorem for lc pairs with a polarization, which was proved in \cite{hashizumehu}. 
Because of gaps between generalized lc pairs and lc pairs, Theorem \ref{thm--abund-main-2} does not directly follow from \cite{hashizumehu}. 

We can consider the same assertion as in Theorem \ref{thm--abund-main-2} (\ref{thm--abund-main-2-(2)}) without the $\mathbb{R}$-Cartier property of $\boldsymbol{\rm M}_{X}$ because we need not $\mathbb{R}$-Cartier property to define $\mathbb{R}$-linear equivalence.  
It is not clear that the $\mathbb{R}$-Cartier property of $\boldsymbol{\rm M}_{X}$ in Theorem \ref{thm--abund-main-2} (\ref{thm--abund-main-2-(2)}) can be removed, but a partial affirmative answer to the question will be given by Theorem \ref{thm--non-vanishing-lc-divisor-2}. See Section \ref{sec5} for the question of this direction and a related topic.  
As a special case of Theorem \ref{thm--non-vanishing-lc-divisor-2} we introduce a result for projective generalized lc pairs $(X,B,\boldsymbol{\rm M})$ with a polarization $A$ satisfying ``pseudo-effectivity of log canonical part $K_{X}+B+A$'' in some sense. 
In \cite{lazic-peternell-gen-abund-I} and \cite{lazic-peternell-gen-abund-II}, Lazi\'c and Peternell discussed on generalized klt pairs (without polarizations) under the assumption on the pseudo-effectivity of the log canonical part. 
Especially, in \cite{lazic-peternell-gen-abund-I}, they proved connections between conjectures of minimal model theory for klt pairs and those for generalized klt pairs. 
A conjecture called ``Generalised Nonvanishing Conjecture'' was formulated in \cite{lazic-peternell-gen-abund-I}, and we prove a variant of the conjecture. 

\begin{thm}[see also Theorem \ref{thm--non-vanishing-lc-divisor-2}]\label{thm--gen-non-vanishing-lp} Let $(X,B,\boldsymbol{\rm M})$ be a projective generalized lc pair such that $\boldsymbol{\rm M}$ is a finite $\mathbb{R}_{>0}$-linear combination of b-nef $\mathbb{Q}$-b-Cartier $\mathbb{Q}$-b-divisors. Let $A$ be an ample $\mathbb{R}$-divisor on $X$. Suppose that $K_{X}+B+A$ is pseudo-effective in the sense of Definition \ref{defn--positivity-divisor} (i.e., for every ample $\mathbb{R}$-divisor $H$ on $X$, the divisor $K_{X}+B+A+H$ is $\mathbb{R}$-linearly equivalent to an effective $\mathbb{R}$-divisor). 

Then, for every real number $\alpha \geq 0$, there exists an effective $\mathbb{R}$-divisor $D_{\alpha}$ on $X$ such that $K_{X}+B+A+\alpha \boldsymbol{\rm M}_{X} \sim_{\mathbb{R}}D_{\alpha}$ as (not necessarily $\mathbb{R}$-Cartier) $\mathbb{R}$-divisors.\end{thm}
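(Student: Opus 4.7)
The plan is to pass to a log resolution $\pi\colon X'\to X$ on which $\boldsymbol{\rm M}$ descends, and to reduce the problem to Theorem~\ref{thm--abund-main-2} on $X'$, where $\boldsymbol{\rm M}_{X'}$ is nef and $\mathbb{R}$-Cartier. Define $B'$ on $X'$ by the crepant identity $K_{X'}+B'+\boldsymbol{\rm M}_{X'}=\pi^{*}(K_{X}+B+\boldsymbol{\rm M}_{X})$ with $B'\leq 1$, and write $B'=B^{+}-B^{-}$ with $B^{\pm}\geq 0$ and $B^{-}$ automatically $\pi$-exceptional; then $(X',B^{+},\boldsymbol{\rm M})$ is generalized lc. Because $\pi_{*}$ preserves $\mathbb{R}$-linear equivalence of Weil divisors and sends effective to effective, and because $\pi_{*}(K_{X'}+B^{+}+\pi^{*}A+\alpha\boldsymbol{\rm M}_{X'})=K_{X}+B+A+\alpha\boldsymbol{\rm M}_{X}$, it suffices to produce, for each $\alpha\geq 0$, an effective $\mathbb{R}$-divisor on $X'$ that is $\sim_{\mathbb{R}}$ to $K_{X'}+B^{+}+\pi^{*}A+\alpha\boldsymbol{\rm M}_{X'}$.

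Next I would verify pseudo-effectivity of this $\mathbb{R}$-Cartier divisor on $X'$. For any ample $H$ on $X$ the crepant identity rearranges to
\[
K_{X'}+B^{+}+\pi^{*}A+\pi^{*}H=\pi^{*}\bigl(K_{X}+B+A+H+\boldsymbol{\rm M}_{X}\bigr)+B^{-}-\boldsymbol{\rm M}_{X'},
\]
and the hypothesis supplies $E_{H}\geq 0$ with $K_{X}+B+A+H\sim_{\mathbb{R}}E_{H}$, so $\pi^{*}(K_{X}+B+A+H+\boldsymbol{\rm M}_{X})\sim_{\mathbb{R}}\pi^{*}(E_{H}+\boldsymbol{\rm M}_{X})$. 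Intersecting with a general movable curve $C'$ on $X'$ whose image $C=\pi_{*}C'$ is movable on $X$ and avoids the $\pi$-exceptional locus, the identity $\boldsymbol{\rm M}_{X}\cdot C=\boldsymbol{\rm M}_{X'}\cdot C'$ forces the $\boldsymbol{\rm M}$-contributions to cancel, and one finds $(K_{X'}+B^{+}+\pi^{*}A+\pi^{*}H)\cdot C'=E_{H}\cdot C+B^{-}\cdot C'\geq 0$. Letting $H\to 0$ yields pseudo-effectivity of $K_{X'}+B^{+}+\pi^{*}A$ on $X'$, and adding the nef divisor $\alpha\boldsymbol{\rm M}_{X'}$ preserves it for every $\alpha\geq 0$.

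To invoke Theorem~\ref{thm--abund-main-2} I would construct an ample polarization on $X'$: by negativity of contraction there is an effective $\pi$-exceptional $\mathbb{R}$-divisor $E$ with $-E$ being $\pi$-ample, so $H_{\delta}:=\pi^{*}A-\delta E$ is ample on $X'$ for $\delta>0$ small, and after a preliminary Bertini-type absorption of a small part of $A$ into the boundary on $X$, I arrange that $(X',B^{+}+\delta E,\boldsymbol{\rm M})$ remains generalized lc. For $\alpha\in[0,1]$ the pair $(X',B^{+}+\delta E,\alpha\boldsymbol{\rm M})$ is generalized lc, and Theorem~\ref{thm--abund-main-2}(\ref{thm--abund-main-2-(1)}) applied with polarization $H_{\delta}$ and the pseudo-effectivity above produces an effective divisor in the class $K_{X'}+(B^{+}+\delta E)+H_{\delta}+\alpha\boldsymbol{\rm M}_{X'}=K_{X'}+B^{+}+\pi^{*}A+\alpha\boldsymbol{\rm M}_{X'}$; for $\alpha\geq 1$, the same conclusion follows from Theorem~\ref{thm--abund-main-2}(\ref{thm--abund-main-2-(2)}) applied to $(X',B^{+}+\delta E,\boldsymbol{\rm M})$ with polarization $H_{\delta}$, since $\boldsymbol{\rm M}_{X'}$ is $\mathbb{R}$-Cartier on $X'$. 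Pushing forward by $\pi_{*}$ then yields the desired effective $D_{\alpha}$ on $X$. The main obstacle will be arranging in this last step that $B^{+}+\delta E$ has all coefficients at most $1$: $B^{+}$ may equal $1$ at $\pi$-exceptional primes which are lc places of $(X,B,\boldsymbol{\rm M})$ while every $\pi$-anti-ample $\pi$-exceptional divisor is forced to meet such primes. I would address this by first taking a $\mathbb{Q}$-factorial dlt modification of $(X,B,\boldsymbol{\rm M})$ and then log-resolving $\boldsymbol{\rm M}$ only over the klt locus of the dlt model, so that the support of $E$ can be confined to the klt exceptional primes where $B^{+}$ has coefficient strictly less than $1$, with the Bertini absorption on $X$ handling the coefficient-$1$ components of $B$.
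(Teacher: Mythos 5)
The skeleton of your reduction (crepant pullback to a model where $\boldsymbol{\rm M}$ descends, then invoking Theorem \ref{thm--abund-main-2}) is reasonable, but the obstacle you flag in your last sentences is the actual content of the theorem, and your proposed remedy does not close it. If $\pi\colon X'\to X$ is any modification that is not small, then for $\pi^{*}A-\delta E$ to be ample the negativity lemma forces $\Supp E$ to contain \emph{every} $\pi$-exceptional prime divisor. On a $\mathbb{Q}$-factorial dlt model of $(X,B,\boldsymbol{\rm M})$ every exceptional prime divisor has coefficient exactly $1$ in the boundary (Theorem \ref{thm--dlt-model}: $\Gamma=f^{-1}_{*}B+{}$the \emph{reduced} exceptional divisor), so there are no ``klt exceptional primes'' on which to confine $E$; and a Bertini perturbation of $A$ on $X$ cannot lower the coefficient of an exceptional divisor of a later modification. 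Moreover, there is no reason $\boldsymbol{\rm M}$ descends to the dlt model over its non-klt locus, so ``log-resolving $\boldsymbol{\rm M}$ only over the klt locus'' is not available: the failure of $\boldsymbol{\rm M}_{X}$ to be $\mathbb{R}$-Cartier may occur precisely along generalized lc centers. (Separately, your pseudo-effectivity transfer via general movable curves is delicate, since $\boldsymbol{\rm M}_{X}\cdot C$ is not defined for non-$\mathbb{R}$-Cartier $\boldsymbol{\rm M}_{X}$ and $\boldsymbol{\rm M}_{X'}$ is a trace rather than a pullback; this could likely be patched, but it is not the main issue.)

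The paper's route avoids exceptional divisors altogether: Theorem \ref{thm--gen-non-vanishing-lp} is the case $t=0$ of Theorem \ref{thm--non-vanishing-lc-divisor-2}, whose proof uses \cite[Theorem 1.2]{has-class} to produce a \emph{small} birational morphism $h\colon Y\to X$ with $K_{Y}+B_{Y}$ $\mathbb{R}$-Cartier and $h$-ample. Smallness means the boundary is just the strict transform, so $(Y,B_{Y},\boldsymbol{\rm M})$ is still generalized lc and $\boldsymbol{\rm M}_{Y}=h^{*}(K_{X}+B+\boldsymbol{\rm M}_{X})-(K_{Y}+B_{Y})$ is $\mathbb{R}$-Cartier; and the ample class upstairs is obtained not by subtracting an exceptional divisor from $h^{*}A$ but by absorbing a small multiple of the $h$-ample divisor $K_{Y}+B_{Y}$, namely $A_{Y}=\tfrac{1}{1-\epsilon}\bigl(\epsilon(K_{Y}+B_{Y})+h^{*}A\bigr)$, after which $K_{Y}+B_{Y}+h^{*}A+\alpha\boldsymbol{\rm M}_{Y}=(1-\epsilon)\bigl(K_{Y}+B_{Y}+A_{Y}+\tfrac{\alpha}{1-\epsilon}\boldsymbol{\rm M}_{Y}\bigr)$ and pseudo-effectivity transfers by Lemma \ref{lem--equiv-positivity}. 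Some such small ``log canonicalization'' input appears indispensable here; without it your construction of an ample polarization on the higher model is incompatible with preserving the generalized lc condition.
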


When we consider $\mathbb{R}$-Cartier divisors, the pseudo-effectivity of Definition \ref{defn--positivity-divisor} coincides with the usual pseudo-effectivity. 
In Theorem \ref{thm--gen-non-vanishing-lp}, the pseudo-effectivity of $K_{X}+B+A$ holds, for example, when there is a resolution $f\colon \tilde{X} \to X$ and an effective $f$-exceptional $\mathbb{R}$-divisor $\tilde{E}$ on $\tilde{X}$ such that $f^{*}(K_{X}+B+\boldsymbol{\rm M}_{X}+A)-\boldsymbol{\rm M}_{\tilde{X}}+\tilde{E}$ is pseudo-effective in the usual sense. 


Although the above theorems are concerned with only the non-vanishing theorem, we can prove a stronger property (see Theorem \ref{thm--abund-main} for more general setup). 

\begin{thm}[see also Theorem \ref{thm--abund-main}]\label{thm--intro-abund-main}
Let $(X,B,\boldsymbol{\rm M})$ be a projective generalized lc pair such that $\boldsymbol{\rm M}$ is a finite $\mathbb{R}_{>0}$-linear combination of b-nef $\mathbb{Q}$-b-Cartier $\mathbb{Q}$-b-divisors. 
Let $A$ be an effective ample $\mathbb{R}$-divisor on $X$ such that $(X,B+A, \boldsymbol{\rm M})$ is a generalized lc pair and generalized lc centers of $(X,B+A, \boldsymbol{\rm M})$ are generalized lc centers of $(X,B, \boldsymbol{\rm M})$. 
Let $(Y,\Gamma,\boldsymbol{\rm M})$ be a $\mathbb{Q}$-factorial dlt model of $(X,B+A, \boldsymbol{\rm M})$. 

Then, for any sequence of steps of a $(K_{Y}+\Gamma+\boldsymbol{\rm M}_{Y})$-MMP
$$(Y,\Gamma,\boldsymbol{\rm M})=:(Y_{0},\Gamma_{0},\boldsymbol{\rm M}) \dashrightarrow \cdots \dashrightarrow (Y_{i},\Gamma_{i},\boldsymbol{\rm M}) \dashrightarrow \cdots,$$
the divisor $K_{Y_{i}}+\Gamma_{i}+\boldsymbol{\rm M}_{Y_{i}}$ is log abundant with respect to $(Y_{i},\Gamma_{i},\boldsymbol{\rm M})$ for every $i\geq0$.  
\end{thm}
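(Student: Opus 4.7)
The plan is to combine Theorem \ref{thm--abund-main-2}, divisorial adjunction for generalized pairs, and a bigness argument to prove log abundance of $K_{Y}+\Gamma+\boldsymbol{\rm M}_{Y}$ on the initial dlt model $Y=Y_{0}$, and then to transfer this property through each step of the generalized MMP.

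First, if $K_{X}+B+A+\boldsymbol{\rm M}_{X}$ is not pseudo-effective, then log abundance of $K_{Y_{i}}+\Gamma_{i}+\boldsymbol{\rm M}_{Y_{i}}$ holds vacuously for every $i$, so I may assume pseudo-effectivity. Let $f\colon Y\to X$ denote the dlt model. For any generalized lc center $S\subseteq Y$ of $(Y,\Gamma,\boldsymbol{\rm M})$, divisorial adjunction for generalized pairs yields a generalized dlt pair $(S,\Gamma_{S},\boldsymbol{\rm M})$ with
\[
(K_{Y}+\Gamma+\boldsymbol{\rm M}_{Y})|_{S}=K_{S}+\Gamma_{S}+\boldsymbol{\rm M}_{S}.
\]
Using Kodaira's lemma on $f^{*}A$, which is nef and big on $Y$, together with the hypothesis that $A$ does not meet any generalized lc center of $(X,B+A,\boldsymbol{\rm M})$, I would perturb to write $K_{Y}+\Gamma+\boldsymbol{\rm M}_{Y}\sim_{\mathbb{R}}K_{Y}+\Gamma'+A_{Y}+\boldsymbol{\rm M}_{Y}$, where $A_{Y}$ is an effective ample $\mathbb{R}$-divisor on $Y$ and $(Y,\Gamma',\boldsymbol{\rm M})$ is still generalized dlt with the same set of generalized lc centers. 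By the same perturbation, $A_{Y}|_{S}$ is an ample $\mathbb{R}$-divisor on $S$ lying inside $\Gamma_{S}$. Applying Theorem \ref{thm--abund-main-2} to $(S,\Gamma_{S}-A_{Y}|_{S},\boldsymbol{\rm M})$ with polarization $A_{Y}|_{S}$ produces an effective $\mathbb{R}$-divisor $\mathbb{R}$-linearly equivalent to $(K_{Y}+\Gamma+\boldsymbol{\rm M}_{Y})|_{S}$; combined with bigness on $S$ coming from the ample summand, this forces the Iitaka and numerical dimensions of the restriction both to equal $\dim S$, giving abundance on $S$. Hence $K_{Y}+\Gamma+\boldsymbol{\rm M}_{Y}$ is log abundant on $Y$.

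For a step $Y_{i}\dashrightarrow Y_{i+1}$ of the $(K_{Y}+\Gamma+\boldsymbol{\rm M}_{Y})$-MMP, the generalized dlt property is preserved, and the generalized lc centers of $(Y_{i+1},\Gamma_{i+1},\boldsymbol{\rm M})$ are precisely strict transforms of non-contracted generalized lc centers of $(Y_{i},\Gamma_{i},\boldsymbol{\rm M})$. For a non-contracted center $S_{i}\subseteq Y_{i}$ with strict transform $S_{i+1}\subseteq Y_{i+1}$, the induced birational map $S_{i}\dashrightarrow S_{i+1}$ is a step of an MMP on the restricted generalized dlt pair, so both the Iitaka dimension (by birational invariance) and the numerical dimension (by standard properties of MMP on pseudo-effective divisors) of the restricted log canonical divisor are preserved. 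Hence abundance on $S_{i}$ transfers to abundance on $S_{i+1}$, and log abundance on $Y_{i+1}$ follows by induction on $i$.

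The main obstacle is the construction of the ample summand $A_{Y}$ inside $\Gamma$ while keeping the set of generalized lc centers intact. This requires carefully absorbing the effective part of a Kodaira decomposition of $f^{*}A$ into the boundary without raising any coefficient to one along a new stratum; the hypothesis that $A$ avoids generalized lc centers of $(X,B+A,\boldsymbol{\rm M})$ is essential here, as it ensures that the support of the effective part lies away from the relevant strata and admits a sufficiently small perturbation. A secondary technical point is the compatibility of divisorial adjunction with MMP steps for generalized dlt pairs, ensuring that the restricted log canonical divisor on an lc center behaves well under pushforward through a divisorial contraction or flip, which is needed to make the inductive step on $i$ rigorous.
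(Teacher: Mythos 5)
Your proposal has a fatal gap at its central step: the claim that $(K_{Y}+\Gamma+\boldsymbol{\rm M}_{Y})|_{S}$ is big ``coming from the ample summand.'' Decomposing a divisor as (something)$+$(ample) only yields bigness when the first summand is pseudo-effective, and here the first summand $K_{S}+(\Gamma_{S}-A_{Y}|_{S})+\boldsymbol{\rm N}_{S}$ has no reason to be pseudo-effective; indeed the hardest case of the whole theorem (Step \ref{step2abund} onward in the proof of Theorem \ref{thm--abund-sub}) is precisely when removing any piece of the boundary destroys pseudo-effectivity. A concrete counterexample to your bigness claim: on $X=\mathbb{P}^{1}\times\mathbb{P}^{1}$ with $B=F_{1}'+F_{1}''+F_{2}'$ (three fibers) and $A\sim_{\mathbb{R}}\epsilon F_{1}+F_{2}$ ample, one has $K_{X}+B+A\sim_{\mathbb{R}}\epsilon F_{1}$, whose restriction to the lc center $F_{1}'$ is numerically trivial, hence not big. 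Once bigness fails, Theorem \ref{thm--abund-main-2} only gives you effectivity ($\kappa_{\iota}\geq 0$) of the restriction, which is far weaker than the required equality $\kappa_{\iota}=\kappa_{\sigma}$; nothing in your argument closes that gap. (There is also a structural issue: Theorem \ref{thm--abund-main-2} is itself deduced in the paper from Theorem \ref{thm--abund-sub}, which is the engine behind the statement you are proving, so your route inverts the logical order even where it is not outright circular.)

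The transfer through MMP steps is also not justified. The restrictions $(K_{Y_{i}}+\Gamma_{i}+\boldsymbol{\rm M}_{Y_{i}})|_{S_{i}}$ and $(K_{Y_{i+1}}+\Gamma_{i+1}+\boldsymbol{\rm M}_{Y_{i+1}})|_{S_{i+1}}$ are not crepant to one another: the generalized log discrepancies strictly increase along the MMP, so on a common resolution $\tilde{T}$ of $S_{i}\dashrightarrow S_{i+1}$ the two pullbacks differ by $\Xi_{+}-\Xi_{-}$ with $\Xi_{+}$ a priori non-exceptional, and neither $\kappa_{\iota}$ nor $\kappa_{\sigma}$ is invariant under such a change. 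The paper's proof of Theorem \ref{thm--abund-main} spends real effort showing $\Xi_{+}$ is $\tau$-exceptional, using the generality of $A$ (the inequality $\tau^{*}A_{S}\leq\tau'^{-1}_{*}A_{S_{i}}$) and a discrepancy computation, and even then it does not ``transfer'' abundance from $S_{0}$ to $S_{i}$; it proves abundance on each $S_{i}$ directly by applying the dimensional induction of Theorem \ref{thm--abund-sub} (via Lemma \ref{lem--abund-birat}) to a log smooth model $\tilde{T}$ dominating both centers. Finally, two smaller slips: the theorem's hypothesis is that $(X,B+A,\boldsymbol{\rm M})$ acquires no new generalized lc centers, not that $A$ avoids the lc centers (a general ample divisor meets every positive-dimensional stratum); and non-pseudo-effectivity of $K_{X}+B+A+\boldsymbol{\rm M}_{X}$ does not make log abundance vacuous, since restrictions to lc centers can still be pseudo-effective.
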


It is important to consider the situation of Theorem \ref{thm--intro-abund-main} because the situation is deeply concerned with the existence of flips for $(K_{X}+B+\boldsymbol{\rm M}_{X})$-flipping contractions. 
Property of being log abundant for $\mathbb{R}$-divisors is much stronger than effectivity up to $\mathbb{R}$-linear equivalence (Subsection \ref{subsec2.2}), therefore Theorem \ref{thm--intro-abund-main} gives a partial answer to \cite[Question 3.5]{bhzariski} by Birkar and Hu. 
In the case of log pairs, the existence of log MMP with scaling of an ample divisor preserving property of being log abundant implies the existence of a log minimal model (\cite[Corollary 1.2]{has-finite}). We believe that Theorem \ref{thm--intro-abund-main} and the arguments in \cite[Section 5]{hashizumehu} (or \cite{has-finite}) with the aid of \cite{hanli} by Han--Li will give a proof of the existence of a minimal model in the setting of Theorem \ref{thm--intro-abund-main}. On the other hand, the abundance conjecture in the setting of Theorem \ref{thm--intro-abund-main} seems difficult because the Kodaira type vanishing theorem (\cite{fujino-nonvan}, \cite{fujino-fund}) and the finiteness of pluri-canonical representation (\cite{fujino-gongyo}, \cite{haconxu}) for generalized lc pairs are still widely open. 

Finally, we give a remark on Kodaira type vanishing theorem for generalized pairs. 
As a generalization of 
\cite[Theorem 5.6.4]{fujino-book}, for all projective generalized lc pairs $(X,B,\boldsymbol{\rm M})$ and $\mathbb{Q}$-Cartier Weil divisors $D$ on $X$ such that $D-(K_{X}+B+\boldsymbol{\rm M}_{X})$ is ample, we may expect that $H^{i}(X,\mathcal{O}_{X}(D))$ vanishes for every $i>0$. 
This problem will be an important step toward the cone and contraction theorem for generalized lc pairs. 
Since the non-vanishing theorem for generalized lc pairs holds in the situation of Theorem \ref{thm--abund-main-2} (\ref{thm--abund-main-2-(2)}), it is natural to expect that the Kodaira type vanishing theorem for generalized lc pairs also holds in the situation as in Theorem \ref{thm--abund-main-2} (\ref{thm--abund-main-2-(2)}) or  Theorem \ref{thm--gen-non-vanishing-lp}. 
Unfortunately, the following result shows that the expectation cannot be realized in general. 

\begin{thm}[see {Theorem \ref{thm--van-example}}]\label{thm--intro-van-example} 
There is a projective $\mathbb{Q}$-factorial generalized klt pair $(X,B,\boldsymbol{\rm M})$ and a Cartier divisor $D$ on $X$ satisfying the following property: There is a rational number $t>1$ such that $D-(K_{X}+B+t\boldsymbol{\rm M}_{X})$ is ample and $H^{1}(X,\mathcal{O}_{X}(D))\neq \{0\}$. 
\end{thm}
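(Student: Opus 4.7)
The plan is to construct an explicit example rather than argue by general principles. I would first choose $X$ to be a projective $\mathbb{Q}$-factorial normal variety equipped with a distinguished birational morphism $\pi\colon V\to X$ from a smooth $V$, chosen so that $V$ supports line bundles with non-trivial $H^{1}$ (for instance through a Leray/K\"unneth presentation involving an elliptic curve factor, or a suitable ruled surface structure). The b-nef $\mathbb{Q}$-b-Cartier b-divisor $\boldsymbol{\rm M}$ is then descended from a nef $\mathbb{Q}$-Cartier divisor $N$ on $V$ whose push-forward $\boldsymbol{\rm M}_{X}=\pi_{*}N$ interacts non-trivially with the discrepancies of $\pi$, so that the exceptional correction $E=\pi^{*}\boldsymbol{\rm M}_{X}-N\geq 0$ is a genuinely effective divisor on $V$ (its existence follows from the negativity lemma).

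Next I would choose the boundary $B$ on $X$ with coefficients in $[0,1)$ so that, in the pullback formula $K_{V}+B_{V}+N=\pi^{*}(K_{X}+B+\boldsymbol{\rm M}_{X})$, every coefficient of $B_{V}$ is strictly below $1$; this makes $(X,B,\boldsymbol{\rm M})$ generalized klt. I would then exhibit a Cartier divisor $D$ on $X$ for which $H^{1}(X,\mathcal{O}_{X}(D))\neq 0$ by a direct cohomological calculation, typically via the Leray spectral sequence for $\pi$: the non-vanishing is engineered to come from a non-zero contribution of $R^{1}\pi_{*}$ applied to an exceptional sheaf on $V$, precisely the place where the naive Kawamata--Viehweg argument on a log resolution fails to descend to $X$. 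Finally, for a carefully chosen rational $t>1$ I would verify that $D-(K_{X}+B+t\boldsymbol{\rm M}_{X})$ is ample by the Nakai--Moishezon criterion, checking positivity against every numerical curve class on $X$ and positivity of the top self-intersection.

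The main obstacle lies in coordinating the last two steps simultaneously: the parameter $t$ must be taken so that the scaled divisor $D-(K_{X}+B+t\boldsymbol{\rm M}_{X})$ is still ample while the pullback boundary becomes $B_{V}+(t-1)E$ and has at least one coefficient reaching or exceeding $1$, driving $(X,B,t\boldsymbol{\rm M})$ out of the generalized klt regime. Locating a concrete rational $t$ strictly greater than $1$ rather than merely equal to $1$ requires some numerical slack in the intersection-theoretic data, and producing a concrete quadruple $(X,B,\boldsymbol{\rm M},D)$ together with a witness class detecting the non-vanishing of $H^{1}$ is the delicate part of the argument. The rest of the verification is routine intersection theory and a standard calculation of cohomology via a resolution.
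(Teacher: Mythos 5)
Your proposal is a blueprint, not a proof: the entire mathematical content of this statement is the explicit construction of the quadruple $(X,B,\boldsymbol{\rm M},D)$, and you explicitly defer exactly that (``producing a concrete quadruple \ldots is the delicate part of the argument''). Nothing in the proposal pins down a variety, a contraction, a nef part, or a witness cohomology class, so as written there is nothing to check. For comparison, the paper's Theorem \ref{thm--van-example} takes $W$ an elliptic curve, $Y=\mathbb{P}_{\mathbb{P}^{1}\times W}(\mathcal{O}\oplus\mathcal{O}(-p_{V}^{*}H_{V}-p_{W}^{*}H_{W}))$, contracts the distinguished section $T\cong\mathbb{P}^{1}\times W$ via $\pi\colon Y\to X$, takes $\boldsymbol{\rm M}$ to be (a small multiple of) the closure of $2f^{*}p_{V}^{*}H_{V}$, and sets $D=-lK_{X}$; the failure of generalized klt-ness at $t>1$ is caused by $\pi^{*}\boldsymbol{\rm M}_{X}-\boldsymbol{\rm M}_{Y}$ being a positive multiple of $T$, which matches the role you assign to your divisor $E$.

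Beyond the missing construction, the cohomological mechanism you propose is the wrong one. You say the non-vanishing of $H^{1}(X,\mathcal{O}_{X}(D))$ should ``come from a non-zero contribution of $R^{1}\pi_{*}$ applied to an exceptional sheaf.'' In the five-term exact sequence
$$0\to H^{1}(X,\pi_{*}\mathcal{F})\to H^{1}(V,\mathcal{F})\to H^{0}(X,R^{1}\pi_{*}\mathcal{F}),$$
a non-zero $R^{1}\pi_{*}\mathcal{F}$ accounts for classes in $H^{1}$ upstairs that \emph{fail} to descend; it does not manufacture classes in $H^{1}(X,\pi_{*}\mathcal{F})$. The paper does the opposite: it arranges $R^{i}\pi_{*}\mathcal{O}_{Y}=0$ for $i>0$ ($Y$ is Fano over $X$ since $\pi^{*}K_{X}=K_{Y}-T$ and $-T$ is $\pi$-ample), so that $H^{1}(X,\mathcal{O}_{X}(-lK_{X}))\cong H^{1}(Y,\mathcal{O}_{Y}(-lK_{Y}+lT))$, and then detects non-vanishing by surjecting onto $H^{1}(T,\mathcal{O}_{T}((-lK_{Y}+lT)|_{T}))$, which reduces via the $\mathbb{P}^{1}$-factor and Serre duality to $H^{0}(W,\mathcal{O}_{W}(2lH_{W}))\neq 0$ on the elliptic curve. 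Even that surjectivity is not free: it requires the separate vanishing $H^{2}(Y,\mathcal{O}_{Y}(-lK_{Y}+(l-1)T))=0$, proved by a relative Kawamata--Viehweg argument. None of these steps is routine enough to be waved through, so the proposal has a genuine gap both in substance (no example) and in the one mechanism it does commit to.
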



The contents of this paper are as follows: 
In Section \ref{sec2}, we collect definition and basic properties of generalized pairs, and we define property of being log abundant. 
In Section \ref{sec3}, we study the generalized abundance for generalized lc pairs. 
In Section \ref{sec4}, we prove 
theorems \ref{thm--abund-main-2}--\ref{thm--intro-van-example}. 
In Section \ref{sec5}, which is an appendix, we give a small remark on a non-$\mathbb{R}$-Cartier analogue of Theorem \ref{thm--abund-main-2} (\ref{thm--abund-main-2-(2)}).

\begin{ack}
The author was partially supported by JSPS KAKENHI Grant Number JP16J05875 and JP19J00046. 
Part of the work was done while the author was visiting University of Cambridge in October--November 2018 and January 2020. 
The author thanks staffs of the university, Professor Caucher Birkar, Doctor Roberto Svaldi, and Doctor Yanning Xu for their hospitality. 
He is grateful to Professor Caucher Birkar, Professor Yoshnori Gongyo, and Doctor Yanning Xu for discussions and giving him advice. 
He thanks Professors Osamu Fujino and Yoshinori Gongyo for comments. 
He thanks Doctor Sho Ejiri for answering questions. 
\end{ack}

\section{Preliminaries}\label{sec2}

\begin{note}
Throughout this paper, {\em $\mathbb{R}$-divisors} on varieties are not assumed to be $\mathbb{R}$-Cartier. 
In particular, big $\mathbb{R}$-divisors are not necessarily $\mathbb{R}$-Cartier. 
In this paper, we use non-negative integers as subscripts of varieties. 
For instance, varieties indexed by non-negative integers are denoted by $\tilde{X}_{i}$ with $i\in \mathbb{Z}_{\geq0}$. 
For any variety $W$, $\mathbb{R}$-divisor $D$ on $W$ and birational maps $W\dashrightarrow W'$ and $W\dashrightarrow W_{i}$, if there is no risk of confusion, the birational transform of $D$ on $W'$ (resp.~$W_{i}$) is denoted by $D'$ (resp.~$D_{i}$). 
\end{note}

\subsection{Generalized pairs}\label{subsec2.1}
In this subsection, we collect definition and properties of generalized pairs. 

\begin{b-divisor}
We define b-divisors. 

\begin{defn}[Birational model, divisors over varieties]\label{defn--b-divisor}
Let $X$ be a normal variety. 
We call a normal variety $Y$ with a projective birational morphism $Y\to X$ a {\em birational model}. 
When we want to mention the birational morphism $Y\to X$ explicitly, we call $Y\to X$ a {\em birational model}. 
A {\em prime divisor $P$ over $X$} is a prime divisor on a birational model $Y$. 
For any prime divisor $P$ over $X$, we denote the image of $P$ on $X$ by $c_{X}(P)$. 
For any prime divisor $P$ over $X$, let $v_{P}$ be the corresponding divisorial valuation. 
For any prime divisor $P$ on a birational model $Y\to X$ and $P'$ on a birational model $Y'\to X$, the divisorial valuations $v_{P}$ and $v_{P'}$ coincide if and only if the induced birational map $Y\dashrightarrow Y'$ is an isomorphism on a generic point of $P$ and the birational transform of $P$ on $Y'$ is equal to $P'$. 
\end{defn}

\begin{defn}[b-divisors]
Let $X$ be a normal variety. 
Then an {\em $\mathbb{R}$-b-divisor} $\boldsymbol{\rm D}$ on $X$ is a (possibly infinitely many) $\mathbb{R}$-linear combination of divisorial valuations $v_{P}$
$$\boldsymbol{\rm D}:=\sum_{\substack {P:{\rm \,prime\,divisor}\\{\rm over \,}X}} r_{P}v_{P} \quad (r_{P}\in \mathbb{R})$$
such that the set $\{P\;|\; \text{$r_{P}\neq0$ and $c_{X}(P)$ is a prime divisor on $Y$}\}$ is a finite set for all birational models $Y\to X$. 
When all $r_{P}$ are rational numbers, we call $\boldsymbol{\rm D}$ a {\em $\mathbb{Q}$-b-divisor}. 
For a given birational model $Y$, the {\em trace} of an $\mathbb{R}$-b-divisor $\boldsymbol{\rm D}=\sum_{P} r_{P}v_{P}$ on $Y$, which we denote $\boldsymbol{\rm D}_{Y}$, is defined by
$$\boldsymbol{\rm D}_{Y}:=\sum_{\substack {c_{Y}(P){\rm \,is \;a\;divisor}\\{\rm  on \,}Y}} r_{P}c_{Y}(P).$$
By definition, $\boldsymbol{\rm D}_{Y}$ is an $\mathbb{R}$-divisor on $Y$. 

Let $\boldsymbol{\rm D}$ be an $\mathbb{R}$-b-divisor on $X$. 
If there exists an $\mathbb{R}$-Cartier divisor $D$ on a birational model $Y$ such that 
 $$\boldsymbol{\rm D}=\sum_{\substack {P:{\rm \,prime\,divisor}\\{\rm over \,}Y}} {\rm ord}_{P}(D)\cdot v_{P},$$
 then, we call $\boldsymbol{\rm D}$ an {\em $\mathbb{R}$-b-Cartier $\mathbb{R}$-b-divisor}, we say that {\em $\boldsymbol{\rm D}$ descends to $Y$} or {\em $\boldsymbol{\rm D}$ is the closure of $D$}, and we write $\boldsymbol{\rm D}=\overline{D}$. 
In this situation, we have $\boldsymbol{\rm D}_{Y}=D$. 
When $\boldsymbol{\rm D}=\overline{D}$ and $D$ is $\mathbb{Q}$-Cartier, we call $\boldsymbol{\rm D}$ a {\em $\mathbb{Q}$-b-Cartier $\mathbb{Q}$-b-divisor}. 
When $\boldsymbol{\rm D}=\overline{D}$ and $X$ has a projective morphism $X\to Z$ to a variety $Z$ such that $D$ is nef over $Z$, we say that $\boldsymbol{\rm D}$ is a {\em b-nef$/Z$ $\mathbb{R}$-b-Cartier $\mathbb{R}$-b-divisor}.  
\end{defn}

\begin{rem}
Let $X$ be a normal variety and $\boldsymbol{\rm D}$ an $\mathbb{R}$-b-divisor on $X$. 
Let $\phi \colon X\dashrightarrow X'$ be a birational map to a normal variety $X'$. 
If there are proper birational morphisms $f\colon Y \to X$ and $f'\colon Y\to X'$ such that $f'=\phi\circ f$, then we may think of $\boldsymbol{\rm D}$ as an $\mathbb{R}$-b-divisor on $X'$. 
If $\boldsymbol{\rm D}$ is an $\mathbb{R}$-b-Cartier (resp.~b-nef $\mathbb{R}$-b-Cartier) $\mathbb{R}$-b-divisor on $X$, then $\boldsymbol{\rm D}$ which we see as an $\mathbb{R}$-b-divisor on $X'$ is an $\mathbb{R}$-b-Cartier (resp.~b-nef $\mathbb{R}$-b-Cartier) $\mathbb{R}$-b-divisor. 
Similar facts hold true in the framework of $\mathbb{Q}$-b-divisors. 
\end{rem}
\end{b-divisor}

\begin{sing}
A {\em pair} $(X,B)$ consists of a normal variety $X$ and effective $\mathbb{R}$-divisor $B$ on $X$ such that $K_{X}+B$ is $\mathbb{R}$-Cartier. 

Let $(X,B)$ be a pair and $P$ a prime divisor over $X$. 
Then $a(P,X,B)$ denotes the {\em log discrepancy} of $P$ with respect to $(X,B)$. 
A pair $(X,B)$ is called a {\em Kawamata log terminal} (klt, for short) {\em pair} if $a(P,X,B)>0$ for all prime divisors $P$ over $X$. 
A pair $(X,B)$ is called a {\em log canonical} (lc, for short) {\em pair} if $a(P,X,B)\geq0$ for all prime divisors $P$ over $X$. 
A pair $(X,B)$ is called a {\em divisorially log terminal} (dlt, for short) pair if all coefficients of $B$ belong to $[0,1]$ and there is a log resolution of $(X,{\rm Supp}B)$ whose exceptional divisors $E$ satisfy $a(E,X,B)>0$. 
When $(X,B)$ is an lc pair, an {\em lc center} of $(X,B)$ is the image on $X$ of a prime divisor $P$ over $X$ which satisfies $a(P,X,B)=0$. 
\end{sing}

\begin{g-pair}
We define generalized pairs and singularities of generalized pairs. 
In  \cite[Definition 1.4]{bz} the boundary part of generalized pairs are assumed to have coefficients in $[0,1]$, but, in Definition \ref{defn--gen-pair} below we only assume that the boundary parts are effective divisors.

\begin{defn}[Singularities of generalized pairs]\label{defn--gen-pair}
A {\em generalized pair} $(X,B,\boldsymbol{\rm M})/Z$ consists of 
a projective morphism $X\to Z$ from a normal variety to a variety, an effective $\mathbb{R}$-divisor $B$ on $X$, and a b-nef$/Z$ $\mathbb{R}$-b-Cartier $\mathbb{R}$-b-divisor $\boldsymbol{\rm M}$ on $X$ 
such that $K_{X}+B+\boldsymbol{\rm M}_{X}$ is $\mathbb{R}$-Cartier. 
When $\boldsymbol{\rm M}=0$, the generalized pair $(X,B,\boldsymbol{\rm M})/Z$ becomes a pair. 
When $Z$ is a point, we simply denote $(X,B,\boldsymbol{\rm M})$. 

Let $(X,B,\boldsymbol{\rm M})/Z$ be a generalized pair and $P$ a prime divisor over $X$. 
Let $f\colon \tilde{X} \to X$ be a projective birational morphism such that $\boldsymbol{\rm M}=\overline{\boldsymbol{\rm M}_{\tilde{X}}}$ and $P$ appears as a divisor on $\tilde{X}$. 
Then there is an $\mathbb{R}$-divisor $\tilde{B}$ on $\tilde{X}$ such that
$$K_{\tilde{X}}+\tilde{B}+\boldsymbol{\rm M}_{\tilde{X}}=f^{*}(K_{X}+B+\boldsymbol{\rm M}_{X}).$$
Then the {\em generalized log discrepancy $a(P,X,B+\boldsymbol{\rm M}_{X})$} of $P$ with respect to $(X,B,\boldsymbol{\rm M})/Z$ is defined by $a(P,X,B+\boldsymbol{\rm M}_{X})=1-{\rm coeff}_{P}(\tilde{B})$. 
The generalized log discrepancy does not depend on the choice of $f\colon \tilde{X} \to X$. 
When $\boldsymbol{\rm M}=0$, the generalized log discrepancies coincide with the log discrepancies of the pair $(X,B)$. 

A generalized pair $(X,B,\boldsymbol{\rm M})/Z$ is called a {\em generalized klt} (resp.~{\em generalized lc}) {\em pair} if $a(P,X,B+\boldsymbol{\rm M}_{X})>0$ (resp.~$a(P,X,B+\boldsymbol{\rm M}_{X})\geq 0$) for all prime divisors $P$ over $X$. 
A {\em generalized lc center} of $(X,B,\boldsymbol{\rm M})/Z$ is the image on $X$ of a prime divisor $P$ over $X$ satisfying $a(P,X,B+\boldsymbol{\rm M}_{X})=0$. 

A generalized pair $(X,B,\boldsymbol{\rm M})/Z$ is a {\em $\mathbb{Q}$-factorial generalized dlt pair} if $X$ is $\mathbb{Q}$-factorial, $(X,B)$ is a dlt pair, and $(X,B,(1+t)\boldsymbol{\rm M})/Z$ is generalized lc for some real number $t>0$. 
\end{defn}

For a given $(X,B,\boldsymbol{\rm M})/Z$, we call $(Y,\Gamma,\boldsymbol{\rm M})/Z$ in Theorem \ref{thm--dlt-model} below a {\em $\mathbb{Q}$-factorial dlt model} of $(X,B,\boldsymbol{\rm M})/Z$. 

\begin{thm}[Existence of a $\mathbb{Q}$-factorial generalized dlt model, cf.~{\cite[Lemma 4.5]{bz}}]\label{thm--dlt-model}
Let $(X,B,\boldsymbol{\rm M})/Z$ be a generalized lc pair such that $Z$ is quasi-projective. 
Then, there exists a projective birational morphism $f\colon Y\to X$ and a $\mathbb{Q}$-factorial generalized dlt pair $(Y,\Gamma,\boldsymbol{\rm M})/Z$ such that $K_{Y}+\Gamma+\boldsymbol{\rm M}_{Y}=f^{*}(K_{X}+B+\boldsymbol{\rm M}_{X})$ and $\Gamma$ is the sum of $f^{-1}_{*}B$ and the reduced $f$-exceptional divisor.  
\end{thm}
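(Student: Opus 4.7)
The plan is to follow the standard construction of a dlt model for log pairs, adapted to the setting of generalized pairs as in \cite[Lemma 4.5]{bz}. First, I would take a log resolution $g\colon \tilde{X} \to X$ of $(X,\Supp B)$ on which $\boldsymbol{\rm M}$ descends, so that $\boldsymbol{\rm M}=\overline{\boldsymbol{\rm M}_{\tilde{X}}}$ and $\boldsymbol{\rm M}_{\tilde{X}}$ is nef over $Z$. Define $\tilde{B}$ on $\tilde{X}$ by
$$K_{\tilde{X}}+\tilde{B}+\boldsymbol{\rm M}_{\tilde{X}}=g^{*}(K_{X}+B+\boldsymbol{\rm M}_{X}).$$
The coefficient of a prime divisor $P$ on $\tilde{X}$ in $\tilde{B}$ equals $1-a(P,X,B+\boldsymbol{\rm M}_{X})$, which is $\leq 1$ by the generalized lc hypothesis. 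Setting $\Delta:=g_{*}^{-1}B+\Exc(g)_{\mathrm{red}}$, we have $\tilde{B}\leq\Delta$, and $F:=\Delta-\tilde{B}$ is an effective $g$-exceptional $\mathbb{R}$-divisor satisfying
$$K_{\tilde{X}}+\Delta+\boldsymbol{\rm M}_{\tilde{X}}=g^{*}(K_{X}+B+\boldsymbol{\rm M}_{X})+F.$$
Since $\tilde{X}$ is smooth, $\Delta$ has simple normal crossings support with coefficients in $[0,1]$, and $\boldsymbol{\rm M}$ already descends to $\tilde{X}$, one checks directly from the definition that $(\tilde{X},\Delta,\boldsymbol{\rm M})/Z$ is $\mathbb{Q}$-factorial generalized dlt.

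Next, I would run a $(K_{\tilde{X}}+\Delta+\boldsymbol{\rm M}_{\tilde{X}})$-MMP over $X$ with scaling of a sufficiently ample divisor. Because $K_{\tilde{X}}+\Delta+\boldsymbol{\rm M}_{\tilde{X}}$ is $\mathbb{R}$-linearly equivalent over $X$ to the effective $g$-exceptional divisor $F$, each extremal ray contracted by the MMP is negative against $F$; by the negativity lemma the entire exceptional locus of the MMP is contained in $\Supp F$. In particular the MMP never touches the birational transform of $g_{*}^{-1}B$ nor the exceptional divisors outside $\Supp F$, and the $\mathbb{Q}$-factorial generalized dlt property is preserved at every step.

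At the end of this MMP one obtains a projective birational morphism $f\colon Y \to X$ and a $\mathbb{Q}$-factorial generalized dlt pair $(Y,\Gamma,\boldsymbol{\rm M})/Z$ with $\Gamma=f_{*}^{-1}B+\Exc(f)_{\mathrm{red}}$; the negativity lemma applied to $f$ and the birational transform $F_{Y}$, which is nef over $X$ and effective $f$-exceptional, forces $F_{Y}=0$, yielding the required equality $K_{Y}+\Gamma+\boldsymbol{\rm M}_{Y}=f^{*}(K_{X}+B+\boldsymbol{\rm M}_{X})$. The main obstacle is termination of the relative MMP in the generalized setting: in the log pair case this is handled by special termination on strata of $\lfloor\Delta\rfloor$, but for generalized pairs one needs the generalized analogue, which is exactly the technology packaged in \cite[Lemma 4.5]{bz}; rather than redevelop the termination argument, I would invoke that result, which has been set up precisely for this application.
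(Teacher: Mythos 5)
The paper does not actually prove this statement; it is quoted from \cite[Lemma 4.5]{bz}, and your proposal reconstructs essentially the standard argument behind that lemma: pass to a log resolution where $\boldsymbol{\rm M}$ descends, add the reduced exceptional divisor to the crepant pullback so that the error term $F$ is effective and exceptional, contract $F$ by a relative MMP over $X$, and conclude by negativity. The construction and the final negativity-lemma step are correct.

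The one weak point is the last sentence: you defer termination to \cite[Lemma 4.5]{bz}, which is the very statement being proved, so as written the argument is circular in form. The clean way to close this gap with the tools already used in this paper is \cite[Proposition 3.8]{hanli}: since $K_{\tilde{X}}+\Delta+\boldsymbol{\rm M}_{\tilde{X}}\sim_{\mathbb{R},X}F$ with $F\geq 0$ exceptional (in particular very exceptional) over $X$, the MMP over $X$ with scaling of an ample divisor contracts $F$ after finitely many steps; this is exactly how the author handles the identical situation in Lemma \ref{lem--lift} and Proposition \ref{prop--relmmp}. With that substitution your proof is complete and matches the intended one.
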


\begin{rem}\label{rem--gen-dlt}
There are two remarks on $\mathbb{Q}$-factorial generalized dlt pairs.

\begin{enumerate}[(1)]
\item \label{rem--gen-dlt-(1)}
The above definition of $\mathbb{Q}$-factorial generalized dlt pairs coincides with the definition of generalized dlt pairs in \cite[2.13 (2)]{birkar-compl} and \cite[Definition 2.2]{hanli} when $X$ is $\mathbb{Q}$-factorial. 
In this paper, we always assume generalized dlt pairs to be $\mathbb{Q}$-factorial. 
\item \label{rem--gen-dlt-(2)}
Let $(X,B,\boldsymbol{\rm M})/Z$ be a generalized lc pair such that $Z$ is quasi-projective. 
For every prime divisor $P$ over $X$ such that $a(P,X,B+\boldsymbol{\rm M}_{X})=0$, \cite[Lemma 4.5]{bz} shows that there is a $\mathbb{Q}$-factorial generalized dlt model $(Y,\Gamma,\boldsymbol{\rm M})\to(X,B,\boldsymbol{\rm M})$ on which $P$ appears as a divisor. 
In particular, for any lc center $S$ of $(X,B,\boldsymbol{\rm M})/Z$, we can construct a $\mathbb{Q}$-factorial generalized dlt model $(Y,\Gamma,\boldsymbol{\rm M})$ such that $X'\to X$ induces a surjective morphism $T \to S$ from a component $T$ of $\llcorner B'\lrcorner$. 
\end{enumerate}
\end{rem}
\end{g-pair}

\begin{lem}\label{lem--logsmooth}
Let $(X,B,\boldsymbol{\rm M})/Z$ be a generalized lc pair, and let $f\colon \tilde{X}\to X$ be a log resolution of $(X,{\rm Supp B})$ such that
 ${\boldsymbol{\rm M}}=\overline{\boldsymbol{\rm M}_{\tilde{X}}}$. 
We define $\tilde{B}\geq0$ and $\tilde{E}\geq0$ on $\tilde{X}$ by 
$$K_{\tilde{X}}+\tilde{B}+\boldsymbol{\rm M}_{\tilde{X}}=f^{*}(K_{X}+B+\boldsymbol{\rm M}_{X})+\tilde{E}$$
such that $\tilde{B}$ and $\tilde{E}$ have no common components. 
Let $g\colon Y\to \tilde{X}$ be a projective birational morphism from a smooth variety $Y$. 
We define $\Gamma\geq0$ and $F\geq0$ on $Y$ by $K_{Y}+\Gamma=g^{*}(K_{\tilde{X}}+\tilde{B})+F$ such that $\Gamma$ and $F$ have no common components. 

Then $\Gamma$ and $g^{*}\tilde{E}+F$ have no common components. 
We note that we have
$$K_{Y}+\Gamma+\boldsymbol{\rm M}_{Y}=(f \circ g)^{*}(K_{X}+B+\boldsymbol{\rm M}_{X})+g^{*}\tilde{E}+F$$
by construction. 
\end{lem}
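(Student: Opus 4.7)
The plan is a proof by contradiction. Suppose some prime divisor $P$ on $Y$ is a component of both $\Gamma$ and $g^{*}\tilde{E}+F$. Since $\Gamma$ and $F$ share no component by construction, $P$ cannot lie in $F$, so ${\rm coeff}_{P}(F)=0$ and $P$ must be a component of $g^{*}\tilde{E}$; that is, $g(P)\subset \Supp\tilde{E}$. Because $g(P)$ is irreducible, there is a single prime component $E_{j}$ of $\tilde{E}$ with $g(P)\subset E_{j}$. The goal is then to derive ${\rm coeff}_{P}(\Gamma)\le 0$.

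Two structural facts will be set up first. First, $\tilde{E}$ is $f$-exceptional: for any non-$f$-exceptional prime $Q$ on $\tilde{X}$, computing coefficients in the defining equation gives ${\rm coeff}_{Q}(\tilde{B}-\tilde{E})={\rm coeff}_{f(Q)}(B)\ge 0$, and the disjointness of $\Supp\tilde{B}$ and $\Supp\tilde{E}$ then forces ${\rm coeff}_{Q}(\tilde{E})=0$. Combined with the log resolution hypothesis on $f$, this shows $\Supp(\tilde{B}+\tilde{E})$ is SNC. Second, the generalized lc assumption applied to prime components of $\tilde{B}$ forces every coefficient of $\tilde{B}$ into $[0,1]$. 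Consequently $\tilde{B}+E_{j}$ is an SNC boundary with coefficients in $[0,1]$ on the smooth variety $\tilde{X}$, so the pair $(\tilde{X},\tilde{B}+E_{j})$ is log canonical. This yields
\begin{equation*}
a(P,\tilde{X},\tilde{B})-{\rm coeff}_{P}(g^{*}E_{j})=a(P,\tilde{X},\tilde{B}+E_{j})\ge 0.
\end{equation*}
Since $E_{j}$ is a Cartier divisor on the smooth variety $\tilde{X}$ and $g(P)\subset E_{j}$, the pullback $g^{*}E_{j}$ is an integral effective divisor whose support contains $P$, so ${\rm coeff}_{P}(g^{*}E_{j})\ge 1$, and hence $a(P,\tilde{X},\tilde{B})\ge 1$.

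The defining relation $K_{Y}+\Gamma-F=g^{*}(K_{\tilde{X}}+\tilde{B})$ now reads ${\rm coeff}_{P}(\Gamma-F)=1-a(P,\tilde{X},\tilde{B})\le 0$, and together with ${\rm coeff}_{P}(F)=0$ this gives ${\rm coeff}_{P}(\Gamma)\le 0$, contradicting that $P$ is a component of $\Gamma$. The displayed identity
\begin{equation*}
K_{Y}+\Gamma+\boldsymbol{\rm M}_{Y}=(f\circ g)^{*}(K_{X}+B+\boldsymbol{\rm M}_{X})+g^{*}\tilde{E}+F
\end{equation*}
then falls out by direct substitution, using $\boldsymbol{\rm M}_{Y}=g^{*}\boldsymbol{\rm M}_{\tilde{X}}$ since $\boldsymbol{\rm M}$ descends to $\tilde{X}$. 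The subtle point I expect to be the main obstacle is the sub-case where $g(P)$ lies simultaneously in $\Supp\tilde{B}$ and $\Supp\tilde{E}$; there, crude estimates combining the relative canonical $K_{Y/\tilde{X}}$ with $g^{*}\tilde{B}$ only yield $a(P,\tilde{X},\tilde{B})\ge 0$, and the essential trick is to enlarge the boundary by the single integral component $E_{j}$ (legal because $E_{j}\not\subset\Supp\tilde{B}$ by disjointness) and then to exploit the integrality of the Cartier pullback to upgrade the bound to $\ge 1$.
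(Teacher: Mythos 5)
Your proof is correct and follows essentially the same route as the paper's: both arguments reduce to showing that any prime divisor over $\tilde{X}$ whose center lies in ${\rm Supp}\,\tilde{E}$ has log discrepancy at least $1$ with respect to $(\tilde{X},\tilde{B})$, using log smoothness of ${\rm Supp}(\tilde{B}+\tilde{E})$. The paper cites \cite[Lemmas 2.29 and 2.45]{kollar-mori} for this bound, whereas you make the computation explicit via the lc pair $(\tilde{X},\tilde{B}+E_{j})$ and the integrality of $g^{*}E_{j}$; this is the same underlying calculation.
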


\begin{proof}
By hypothesis, $\Gamma$ and $F$ have no common components. 
Hence, it is sufficient to show that $\Gamma$ and $g^{*}\tilde{E}$ have no common components. 
Since $\tilde{B}$ and $\tilde{E}$ have no common components, by log smoothness of $\bigl(\tilde{X},{\rm Supp}(\tilde{B}+\tilde{E})\bigr)$ and computations of discrepancies as in \cite[Lemma 2.29]{kollar-mori} and \cite[Lemma 2.45]{kollar-mori}, for any prime divisor $P$ over $\tilde{X}$ if the center on $\tilde{X}$ is contained in ${\rm Supp}\tilde{E}$ then $a(P,\tilde{X},\tilde{B})\geq 1$. 
Since the log discrepancies of every component of $\Gamma$ with respect to $(\tilde{X},\tilde{B})$ is less than $1$, we see that $\Gamma$ and $g^{*}\tilde{E}$ have no common components. 
Therefore, $\Gamma$ and $g^{*}\tilde{E}+F$ have no common components. 
\end{proof}




\begin{adj-g-pair}
We recall construction of a generalized pair by divisorial adjunction for generalized lc pairs (\cite[Definition 4.7]{bz}). 

Let $(X,B,\boldsymbol{\rm M})/Z$ be a generalized lc pair, and let $S$ be a component of $\llcorner B \lrcorner$ with the normalization $S^{\nu}\to S$. 
Let $f\colon \tilde{X}\to X$ be a log resolution of $(X,{\rm Supp}B)$ such that $\boldsymbol{\rm M}=\overline{\boldsymbol{\rm M}_{\tilde{X}}}$. 
We can write
$$K_{\tilde{X}}+\tilde{B}+\boldsymbol{\rm M}_{\tilde{X}}=f^{*}(K_{X}+B+\boldsymbol{\rm M}_{X})$$
with an $\mathbb{R}$-divisor $\tilde{B}$ on $\tilde{X}$. 
Put $\tilde{S}=f^{-1}_{*}S$ which is a component of $\llcorner \tilde{B} \lrcorner$. 
Pick $\tilde{M}\sim_{\mathbb{R}}\boldsymbol{\rm M}_{\tilde{X}}$ so that $\tilde{M}|_{\tilde{S}}$ is well-defined as an $\mathbb{R}$-divisor on $\tilde{S}$. 
We define a b-nef$/Z$ $\mathbb{R}$-b-divisor $\boldsymbol{\rm N}$ on $S^{\nu}$ by the closure of $\tilde{M}|_{\tilde{S}}$, and we set $B_{S^{\nu}}$ as the birational transform of $(\tilde{B}-\tilde{S})|_{\tilde{S}}$ by the induced birational morphism $f_{\tilde{S}}\colon \tilde{S}\to S^{\nu}$. 
Then it follows that $(S^{\nu},B_{S^{\nu}}, \boldsymbol{\rm N})/Z$ is a generalized pair and 
$$K_{\tilde{S}}+(\tilde{B}-\tilde{S})|_{\tilde{S}}+\boldsymbol{\rm N}_{\tilde{S}}=f_{\tilde{S}}^{*}(K_{S^{\nu}}+B_{S^{\nu}}+\boldsymbol{\rm N}_{S^{\nu}}).$$
Note that $\boldsymbol{\rm N}$ is determined up to $\mathbb{R}$-linear equivalence because $\boldsymbol{\rm N}$ depends on $\tilde{M}$. 
\end{adj-g-pair}

\begin{lem}\label{lem--adjunction}
Let $(X,B,\boldsymbol{\rm M})$ be a projective $\mathbb{Q}$-factorial generalized dlt pair, and let $f\colon \tilde{X}\to X$ be a log resolution of $(X,{\rm Supp}B)$ such that
 ${\boldsymbol{\rm M}}=\overline{\boldsymbol{\rm M}_{\tilde{X}}}$. 
Let $S$ be a component of $\llcorner B \lrcorner$. 
We define $\tilde{B}\geq0$ and $\tilde{E}\geq0$ on $\tilde{X}$ by $K_{\tilde{X}}+\tilde{S}+\tilde{B}+\boldsymbol{\rm M}_{\tilde{X}}=f^{*}(K_{X}+B+\boldsymbol{\rm M}_{X})+\tilde{E}$  such that $\tilde{S}=f_{*}^{-1}S$ and $\tilde{B}$, $\tilde{E}$, and $\tilde{S}$ have no common components one another. 
We define $\mathbb{R}$-divisors $B_{\tilde{S}}$, $B_{S}$, $M_{\tilde{S}}$, and $M_{S}$ by divisorial adjunction for generalized lc pairs. 
In other words, we define $B_{\tilde{S}}=\tilde{B}|_{\tilde{S}}$, $B_{S}=f|_{\tilde{S}*}B_{\tilde{S}}$, $M_{\tilde{S}}=\tilde{M}|_{\tilde{S}}$ for some $\tilde{M}\sim_{\mathbb{R}}\boldsymbol{\rm M}_{\tilde{X}}$, and $M_{S}=f|_{\tilde{S}*}M_{\tilde{S}}$. 

Then $(S,B_{S},\overline{M_{\tilde{S}}})$ and $(\tilde{S}, \tilde{B},\overline{M_{\tilde{S}}})$ are generalized lc pairs. 
Furthermore, we have 
$$a(P,S,B_{S}+M_{S})=a(P,\tilde{S}, B_{\tilde{S}}+M_{\tilde{S}})$$
for all prime divisors $P$ on $S$. 
\end{lem}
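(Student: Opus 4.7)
The plan is to verify the three assertions of the lemma in order. First, since $f$ is a log resolution, $\tilde S$ is smooth and $\tilde S+\tilde B+\tilde E$ has snc support, so $B_{\tilde S}=\tilde B|_{\tilde S}$ has snc support on $\tilde S$. The generalized lc hypothesis on $(X,B,\boldsymbol{\rm M})$ forces the coefficients of $\tilde B$ into $[0,1]$, whence the coefficients of $B_{\tilde S}$ lie in $[0,1]$; together with the b-nefness of $\overline{M_{\tilde S}}$, this makes $(\tilde S,B_{\tilde S},\overline{M_{\tilde S}})$ generalized log smooth, in particular generalized lc.

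Next, I would apply divisorial adjunction for generalized lc pairs, as recalled just before the lemma, to $(X,B,\boldsymbol{\rm M})$ along the component $S$. Since $(X,B)$ is dlt, $S$ is normal, so $S^{\nu}=S$. Restricting the defining equation of $(\tilde S,\tilde B,\tilde E)$ to $\tilde S$ yields
\begin{equation*}
K_{\tilde S}+B_{\tilde S}+M_{\tilde S}=g^{*}(K_{S}+B_{S}^{\mathrm{adj}}+M_{S})+\tilde E|_{\tilde S},
\end{equation*}
where $g:=f|_{\tilde S}$, $B_{S}^{\mathrm{adj}}:=g_{*}(B_{\tilde S}-\tilde E|_{\tilde S})$ and $M_{S}=g_{*}M_{\tilde S}$ are the adjunction data, and $(S,B_{S}^{\mathrm{adj}},\overline{M_{\tilde S}})$ is generalized lc. To identify the lemma's pair with this adjunction pair, the crucial point is to prove $g_{*}(\tilde E|_{\tilde S})=0$, i.e.\ that $\tilde E|_{\tilde S}$ is $g$-exceptional; this gives $B_{S}=g_{*}B_{\tilde S}=B_{S}^{\mathrm{adj}}$, so $K_{S}+B_{S}+M_{S}$ is $\mathbb{R}$-Cartier and $(S,B_{S},\overline{M_{\tilde S}})$ is generalized lc.

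The main obstacle is precisely the $g$-exceptionality of $\tilde E|_{\tilde S}$. Suppose for contradiction that some component $F$ of $\tilde E|_{\tilde S}$ is not $g$-exceptional, so that $g(F)=P$ is a prime divisor on $S$. The non-negativity $\mathrm{coeff}_{P}(B_{S}^{\mathrm{adj}})\geq 0$, combined with the identity $\mathrm{coeff}_{P}(B_{S}^{\mathrm{adj}})=\mathrm{coeff}_{F}(B_{\tilde S})-\mathrm{coeff}_{F}(\tilde E|_{\tilde S})$, forces $\mathrm{coeff}_{F}(B_{\tilde S})\geq \mathrm{coeff}_{F}(\tilde E|_{\tilde S})>0$, so $F$ is a component of both $B_{\tilde S}$ and $\tilde E|_{\tilde S}$. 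Hence $F\subseteq \tilde B_{j}\cap \tilde E_{i}\cap \tilde S$ for some component $\tilde B_{j}$ of $\tilde B$ and $\tilde E_{i}$ of $\tilde E$. Since $\tilde B,\tilde E,\tilde S$ have pairwise no common components, these are three distinct prime divisors on $\tilde X$ meeting transversally by snc, so their triple intersection has codimension $3$ in $\tilde X$, contradicting $\mathrm{codim}_{\tilde X}F=2$.

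Finally, the log discrepancy equality follows immediately from the displayed pullback formula: for a prime divisor $P$ on $S$ with strict transform $\tilde P$ on $\tilde S$, taking the coefficient at $\tilde P$ gives
\begin{equation*}
a(P,S,B_{S}+M_{S})-a(\tilde P,\tilde S,B_{\tilde S}+M_{\tilde S})=\mathrm{coeff}_{\tilde P}(\tilde E|_{\tilde S}),
\end{equation*}
which vanishes because $\tilde P$ is a non-$g$-exceptional divisor on $\tilde S$ while $\tilde E|_{\tilde S}$ is $g$-exceptional.
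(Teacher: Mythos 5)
Your proof is correct and follows essentially the same route as the paper: restrict the crepant relation to $\tilde{S}$, invoke the divisorial adjunction construction (and \cite[Remark 4.8]{bz}) for generalized lc-ness, and reduce the discrepancy equality to showing that $\tilde{E}|_{\tilde{S}}$ is exceptional over $S$. The only difference is that the paper delegates this last point to the proof of \cite[Lemma 2.4]{hashizumehu}, whereas you spell out that argument (the coefficient inequality forcing a non-exceptional component of $\tilde{E}|_{\tilde{S}}$ into a triple intersection of snc divisors), which is the same argument.
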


\begin{proof}
By definition of $B_{\tilde{S}}$ and $M_{\tilde{S}}$, we have 
$K_{\tilde{S}}+B_{\tilde{S}}+M_{\tilde{S}}=f|_{\tilde{S}}^{*}(K_{X}+B+\boldsymbol{\rm M}_{X})|_{S}+\tilde{E}|_{\tilde{S}}.$ 
The first assertion follows from \cite[Remark 4.8]{bz}. 
For the second assertion, the proof of \cite[Lemma 2.4]{hashizumehu} works with no changes. 
More precisely, by the same argument as in \cite[Proof of Lemma 2.4]{hashizumehu}, we see that $B_{\tilde{S}}$ and $\tilde{E}|_{\tilde{S}}$ have no common component and $\tilde{E}|_{\tilde{S}}$ is exceptional over $S$. 
Then the second assertion follows from the definition of log discrepancies for generalized pairs. 
\end{proof}

\begin{mmp-g-pair}
Let $(X,B,\boldsymbol{\rm M})/Z$ be a $\mathbb{Q}$-factorial generalized lc pair such that $Z$ is quasi-projective and $(X,0)$ is a klt pair. 
As in \cite[Section 4]{bz}, we can run a $(K_{X}+B+\boldsymbol{\rm M}_{X})$-MMP over $Z$ with scaling of an ample divisor. 
We often denote the sequence of the MMP by 
 $$(X,B,\boldsymbol{\rm M})=:(X_{0},B_{0},\boldsymbol{\rm M})\dashrightarrow \cdots \dashrightarrow (X_{i},B_{i},\boldsymbol{\rm M})\dashrightarrow \cdots.$$
Here, for each $i$ we think of $\boldsymbol{\rm M}$ as an $\mathbb{R}$-b-divisor on $X_{i}$. 

For fundamental results of MMP for generalized pairs, see \cite{hanli}. 
In this paper, we will freely use results in \cite{hanli}. 
\end{mmp-g-pair}

\subsection{Property of being log abundant}\label{subsec2.2}
In this subsection, we introduce the notion of being log abundant for generalized pairs. 

\begin{defn}[Invariant Iitaka dimension]\label{defn--inv-iitaka-dim}
Let $X$ be a normal projective variety, and let $D$ be an $\mathbb{R}$-Cartier  divisor on $X$. 
We define the {\em invariant  Iitaka dimension} of $D$, denoted by $\kappa_{\iota}(X,D)$, as follows (\cite[Definition 2.2.1]{choi}, see also \cite[Definition 2.5.5]{fujino-book}):  
If there is an $\mathbb{R}$-divisor $E\geq 0$ such that $D\sim_{\mathbb{R}}E$, set $\kappa_{\iota}(X,D)=\kappa(X,E)$. 
Here, the right hand side is the usual Iitaka dimension of $E$. 
Otherwise, we set $\kappa_{\iota}(X,D)=-\infty$. 
We can check that $\kappa_{\iota}(X,D)$ is well-defined, i.e., when there is $E\geq 0$ such that $D\sim_{\mathbb{R}}E$, the invariant Iitaka dimension $\kappa_{\iota}(X,D)$ does not depend on the choice of $E$. 
By definition, we have $\kappa_{\iota}(X,D)\geq0$ if and only if $D$ is $\mathbb{R}$-linearly equivalent to an effective $\mathbb{R}$-divisor. 
\end{defn}

\begin{defn}[Numerical dimension]\label{defn--num-dim}
Let $X$ be a normal projective variety, and let $D$ be an $\mathbb{R}$-Cartier divisor on $X$. 
We define the {\em numerical dimension} of $D$, denoted by $\kappa_{\sigma}(X,D)$, as follows (\cite[V, 2.5 Definition]{nakayama}): 
For any Cartier divisor $A$ on $X$, we set
\begin{equation*}
\sigma(D;A)={\rm max}\!\Set{\!k\in \mathbb{Z}_{\geq0} | \underset{m\to \infty}{\rm lim\,sup}\frac{{\rm dim}H^{0}(X,\mathcal{O}_{X}(\llcorner mD \lrcorner+A))}{m^{k}}>0\!}
\end{equation*}
if ${\rm dim}H^{0}(X,\mathcal{O}_{X}(\llcorner mD \lrcorner+A))>0$ for infinitely many $m\in \mathbb{Z}_{>0}$, and otherwise we set $\sigma(D;A):=-\infty$. 
Then, we define 
\begin{equation*}
\kappa_{\sigma}(X,D):={\rm max}\!\set{\sigma(D;A) | \text{$A$ is a Cartier divisor on $X$}\!}.
\end{equation*}
\end{defn}

See \cite[Remark 2.8]{hashizumehu} for basic properties of the invariant Iitaka dimension and the numerical dimension. 

\begin{defn}[Abundant divisors and log abundant divisors]\label{defn--abund}
Let $X$ be a normal projective variety, and let $D$ be an $\mathbb{R}$-Cartier divisor on $X$. 
We say that $D$ is abundant if the equality $\kappa_{\iota}(X,D)=\kappa_{\sigma}(X,D)$ holds. 

Let $X$ and $D$ be as above, and let $(X,B,\boldsymbol{\rm M})$ be a generalized lc pair. 
We say that $D$ is {\em log abundant with respect to $(X,B,\boldsymbol{\rm M})$} if $D$ is abundant and for any lc center $S$ of $(X,B,\boldsymbol{\rm M})$ with the normalization $S^{\nu}\to S$, the pullback $D|_{S^{\nu}}$ is abundant. 
\end{defn}

We can define the relative version of the invariant Iitaka dimension, the numerical dimension, and the property of being log abundant (\cite[Section 5]{has-finite}). 
But we will not use them in this paper.

\section{Generalized abundance for generalized lc pairs}\label{sec3}

\begin{lem}\label{lem--lift}
Let $(X,B,\boldsymbol{\rm M})$ be a projective generalized lc pair. 
Let $\phi\colon X \dashrightarrow X'$ be a birational contraction to a normal projective variety $X'$, and let $f\colon \tilde{X}\to X$ be a log resolution of $(X,{\rm Supp}B)$ such that $\boldsymbol{\rm M}=\overline{\boldsymbol{\rm M}_{\tilde{X}}}$ and the induced birational map $\tilde{X}\to X'$ is a morphism. 
Let $(\tilde{X},\tilde{B},\boldsymbol{\rm M})$ be a generalized lc pair. 
Suppose that 
\begin{itemize}
\item 
$K_{X'}+\phi_{*}B+\boldsymbol{\rm M}_{X'}$ is $\mathbb{R}$-Cartier, 
\item 
$a(P,X,B+\boldsymbol{\rm M}_{X})\leq a(P,X',\phi_{*}B+\boldsymbol{\rm M}_{X'})$ for all prime divisors $P$ on $\tilde{X}$, and
\item
the relation $K_{\tilde{X}}+\tilde{B}+\boldsymbol{\rm M}_{\tilde{X}}=f^{*}(K_{X}+B+\boldsymbol{\rm M}_{X})+\tilde{E}$ holds for an effective $f$-exceptional $\mathbb{R}$-divisor $\tilde{E}$. 
\end{itemize}
Then, by running a $(K_{\tilde{X}}+\tilde{B}+\boldsymbol{\rm M}_{\tilde{X}})$-MMP over $X'$ with scaling of an ample divisor we get a projective birational morphism $f'\colon \tilde{X}' \to X'$ such that 
$$K_{\tilde{X}'}+\tilde{B}'+\boldsymbol{\rm M}_{\tilde{X}'}=f'^{*}(K_{X'}+\phi_{*}B+\boldsymbol{\rm M}_{X'}),$$
where $\tilde{B}'$ is the birational transforms of $\tilde{B}$ on $\tilde{X}'$. 
\end{lem}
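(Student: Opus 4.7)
The plan is to express $K_{\tilde X}+\tilde B+\boldsymbol{\rm M}_{\tilde X}$ as the pullback to $\tilde X$ of $K_{X'}+\phi_{*}B+\boldsymbol{\rm M}_{X'}$ plus an effective divisor exceptional over $X'$, and then to contract that exceptional part by a relative MMP.

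Let $g:=\phi\circ f\colon \tilde X\to X'$; this is a projective birational morphism by hypothesis. Since $K_{X'}+\phi_{*}B+\boldsymbol{\rm M}_{X'}$ is $\mathbb{R}$-Cartier, the $\mathbb{R}$-divisor
\[
F:=(K_{\tilde X}+\tilde B+\boldsymbol{\rm M}_{\tilde X})-g^{*}(K_{X'}+\phi_{*}B+\boldsymbol{\rm M}_{X'})
\]
is well-defined on $\tilde X$. The coefficient of $F$ at any prime divisor $P$ on $\tilde X$ equals $a(P,X',\phi_{*}B+\boldsymbol{\rm M}_{X'})-a(P,X,B+\boldsymbol{\rm M}_{X})\geq 0$ by the second hypothesis, so $F\geq 0$. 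Moreover, using the third hypothesis (with $\tilde E$ being $f$-exceptional) together with $f_{*}\boldsymbol{\rm M}_{\tilde X}=\boldsymbol{\rm M}_{X}$ one obtains $f_{*}\tilde B=B$, and hence $g_{*}\tilde B=\phi_{*}B$ because $\phi$ is a birational contraction. Applying $g_{*}$ to the displayed identity gives $g_{*}F=0$, so $F$ is $g$-exceptional.

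Next I would run a $(K_{\tilde X}+\tilde B+\boldsymbol{\rm M}_{\tilde X})$-MMP over $X'$ with scaling of an ample divisor, whose existence is provided by \cite{hanli} and \cite{bz} since $\tilde X$ is smooth and $(\tilde X,\tilde B,\boldsymbol{\rm M})$ is a $\mathbb{Q}$-factorial generalized lc pair. By the displayed relation this MMP is numerically an $F$-MMP over $X'$. On every intermediate model $\tilde X_{i}\to X'$, the negativity lemma applied to the effective $g_{i}$-exceptional divisor $F_{i}$ shows that each $(K_{\tilde X_{i}}+\tilde B_{i}+\boldsymbol{\rm M}_{\tilde X_{i}})$-negative extremal ray over $X'$ is $F_{i}$-negative, so every step of the MMP contracts or flips curves contained in ${\rm Supp}\,F_{i}$ and $F_{i}$ strictly decreases.

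The main obstacle is proving that this MMP terminates and that $F_{i}$ eventually becomes zero. For termination I would invoke the MMP-with-scaling theorem for generalized lc pairs admitting a trivial relative weak Zariski decomposition (namely, the displayed identity above) established in \cite{hanli}; equivalently, $g^{*}(K_{X'}+\phi_{*}B+\boldsymbol{\rm M}_{X'})$ is already a relative log canonical model of $(\tilde X,\tilde B,\boldsymbol{\rm M})$ over $X'$, which forces termination. At the output $f'\colon \tilde X'\to X'$ the divisor $K_{\tilde X'}+\tilde B'+\boldsymbol{\rm M}_{\tilde X'}$ is $f'$-nef and numerically equivalent over $X'$ to the $f'$-exceptional effective divisor $F'$; the negativity lemma then forces $F'=0$, yielding the desired relation $K_{\tilde X'}+\tilde B'+\boldsymbol{\rm M}_{\tilde X'}=f'^{*}(K_{X'}+\phi_{*}B+\boldsymbol{\rm M}_{X'})$.
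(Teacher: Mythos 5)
Your proposal is correct and follows essentially the same route as the paper: write $K_{\tilde X}+\tilde B+\boldsymbol{\rm M}_{\tilde X}=g^{*}(K_{X'}+\phi_{*}B+\boldsymbol{\rm M}_{X'})+F$ with $F\geq 0$ effective and $g$-exceptional, then contract $F$ by a relative MMP with scaling over $X'$ (the paper cites \cite[Proposition 3.8]{hanli} for exactly the termination/contraction step you describe) and conclude by negativity. One small imprecision: the coefficient of $F$ at $P$ is $a(P,X',\phi_{*}B+\boldsymbol{\rm M}_{X'})-a(P,\tilde X,\tilde B+\boldsymbol{\rm M}_{\tilde X})$, i.e.\ the discrepancy difference you wrote \emph{plus} ${\rm coeff}_{P}(\tilde E)$, but since both terms are nonnegative your conclusion $F\geq 0$ is unaffected.
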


\begin{proof}
We denote $\tilde{X}\to X'$ by $g$. 
By the hypothesis of relation on discrepancies of the generalized pairs, there is a $g$-exceptional $\mathbb{R}$-divisor $\tilde{F}\geq 0$ on $\tilde{X}$ such that 
$$K_{\tilde{X}}+\tilde{B}+\boldsymbol{\rm M}_{\tilde{X}}=f^{*}(K_{X}+B+\boldsymbol{\rm M}_{X})+\tilde{E}=g^{*}(K_{X'}+\phi_{*}B+\boldsymbol{\rm M}_{X'})+\tilde{E}+\tilde{F}.$$
By construction, $\tilde{E}+\tilde{F}$ is effective and $g$-exceptional. 
By \cite[Proposition 3.8]{hanli} and running a $(K_{\tilde{X}}+\tilde{B}+\boldsymbol{\rm M}_{\tilde{X}})$-MMP over $X'$ with scaling of an ample divisor, we get a birational contraction $\tilde{X}\dashrightarrow \tilde{X}'$ over $X'$ which contracts $\tilde{E}+\tilde{F}$. 
Then it is clear that the morphism $\tilde{X}' \to X'$ is the desired one. 
\end{proof}

\begin{prop}\label{prop--relmmp}
Let $(X,B,\boldsymbol{\rm M})/Z$ be a generalized lc pair such that $Z$ is quasi-projective. 
Let $f\colon \tilde{X}\to X$ be a log resolution of $(X,{\rm Supp}B)$ such that
 ${\boldsymbol{\rm M}}=\overline{\boldsymbol{\rm M}_{\tilde{X}}}$, 
and let $\tilde{B}$ be an $\mathbb{R}$-divisor on $\tilde{X}$ such that $(\tilde{X},\tilde{B})$ is a log smooth lc pair and $f_{*}\tilde{B}=B$. 
Suppose that 
\begin{itemize}
\item
$K_{\tilde{X}}+\tilde{B}+\boldsymbol{\rm M}_{\tilde{X}}$ is pseudo-effective over $Z$,
\item
$K_{X}+B+\boldsymbol{\rm M}_{X}\sim_{\mathbb{R},Z}0$, and
\item
there is a klt pair $(X,\Delta)$ such that $\Delta$ is big over $Z$ and $K_{X}+\Delta\sim_{\mathbb{R},Z}0$. 
\end{itemize}
Then there is a birational contraction $\tilde{X}\dashrightarrow \tilde{X}'$ over $Z$ such that if $\tilde{B}'$ is the birational transforms of $\tilde{B}$ on $\tilde{X}'$, then
\begin{itemize}
\item
$K_{\tilde{X}'}+\tilde{B}'+\boldsymbol{\rm M}_{\tilde{X}'}$ is semi-ample over $Z$, and
\item
$a(P,\tilde{X},\tilde{B}+\boldsymbol{\rm M}_{\tilde{X}})\leq a(P,\tilde{X}',\tilde{B}'+\boldsymbol{\rm M}_{\tilde{X}'})$ for all prime divisors $P$ over $\tilde{X}$. 
\end{itemize}
\end{prop}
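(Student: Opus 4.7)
The plan is to run a $(K_{\tilde{X}}+\tilde{B}+\boldsymbol{\rm M}_{\tilde{X}})$-MMP over $Z$ with scaling of an ample divisor and show, using the klt-and-big$/Z$ structure inherited from $(X,\Delta)$, that it terminates with a good minimal model. As preparation, I would write
$K_{\tilde{X}}+\tilde{B}+\boldsymbol{\rm M}_{\tilde{X}}=f^{*}(K_{X}+B+\boldsymbol{\rm M}_{X})+\tilde{G}$,
where $\tilde{G}$ is $f$-exceptional because $f_{*}\tilde{B}=B$, but may not be effective: letting $\tilde{B}^{\flat}$ denote the generalized lc pullback of $B$ defined by $K_{\tilde{X}}+\tilde{B}^{\flat}+\boldsymbol{\rm M}_{\tilde{X}}=f^{*}(K_{X}+B+\boldsymbol{\rm M}_{X})$, we have $\tilde{G}=\tilde{B}-\tilde{B}^{\flat}$, and the coefficients of $\tilde{G}$ at $f$-exceptional primes can have either sign. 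Combined with $K_{X}+B+\boldsymbol{\rm M}_{X}\sim_{\mathbb{R},Z}0$ this yields $K_{\tilde{X}}+\tilde{B}+\boldsymbol{\rm M}_{\tilde{X}}\sim_{\mathbb{R},Z}\tilde{G}$, so the target is a model on which the transform of $\tilde{G}$ becomes semi-ample$/Z$.

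Next, I would transfer the klt-and-big$/Z$ hypothesis from $X$ to $\tilde{X}$. Starting from $(X,\Delta)$ klt with $\Delta$ big$/Z$ and $K_{X}+\Delta\sim_{\mathbb{R},Z}0$, I would choose $\tilde{\Delta}\geq 0$ on $\tilde{X}$ so that $(\tilde{X},\tilde{\Delta})$ is klt, $\tilde{\Delta}$ is big$/Z$, and $K_{\tilde{X}}+\tilde{\Delta}$ is $\mathbb{R}$-linearly equivalent over $Z$ to an effective $f$-exceptional divisor; this is standard via pullback and a small perturbation by $f$-exceptional divisors, using \cite{bchm}. For sufficiently small $\epsilon>0$, the generalized pair $\bigl(\tilde{X},(1-\epsilon)\tilde{B}+\epsilon\tilde{\Delta},(1-\epsilon)\boldsymbol{\rm M}\bigr)/Z$ is then generalized klt with big$/Z$ boundary, placing $(\tilde{X},\tilde{B},\boldsymbol{\rm M})/Z$ in the setting where the MMP theory for generalized pairs of \cite{hanli} applies.

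Finally, I would run a $(K_{\tilde{X}}+\tilde{B}+\boldsymbol{\rm M}_{\tilde{X}})$-MMP over $Z$ with scaling of an ample divisor, invoking \cite{hanli}. Pseudo-effectivity over $Z$ prevents Mori-fibre-space contractions, so every step is a divisorial contraction or a flip; the perturbation of the previous paragraph, combined with the BCHM-style termination for generalized pairs with big$/Z$ boundary proved in \cite{hanli}, gives termination. The output is a birational contraction $\tilde{X}\dashrightarrow \tilde{X}'$ over $Z$ with $K_{\tilde{X}'}+\tilde{B}'+\boldsymbol{\rm M}_{\tilde{X}'}$ nef$/Z$, and the inequality $a(P,\tilde{X},\tilde{B}+\boldsymbol{\rm M}_{\tilde{X}})\leq a(P,\tilde{X}',\tilde{B}'+\boldsymbol{\rm M}_{\tilde{X}'})$ for all prime divisors $P$ over $\tilde{X}$ is a standard consequence of MMP with scaling (each step only contracts divisors of log discrepancy strictly less than those of the divisors kept). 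Semi-ampleness over $Z$ then follows from the base-point-free theorem for generalized pairs (again \cite{hanli}), whose hypotheses are supplied by the transformed gen klt big$/Z$ perturbation on $\tilde{X}'$. The main obstacle is securing termination and semi-ampleness in the generalized lc (not generalized klt) setting; both steps rely crucially on transferring the klt-and-big$/Z$ data on $(X,\Delta)$ to $\tilde{X}$ via the perturbation, without which the BCHM-type machinery of \cite{hanli} does not directly apply.
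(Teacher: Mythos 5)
There is a genuine gap at the heart of your argument: the perturbation you use to import the klt-and-big$/Z$ machinery does not produce a divisor proportional (over $Z$) to $K_{\tilde{X}}+\tilde{B}+\boldsymbol{\rm M}_{\tilde{X}}$, so the MMP you can actually run and terminate is not a $(K_{\tilde{X}}+\tilde{B}+\boldsymbol{\rm M}_{\tilde{X}})$-MMP. Concretely, you take $\tilde{\Delta}\geq 0$ with $(\tilde{X},\tilde{\Delta})$ klt, $\tilde{\Delta}$ big$/Z$ and $K_{\tilde{X}}+\tilde{\Delta}\sim_{\mathbb{R},Z}E'$ for some effective $f$-exceptional $E'$. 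Then
$K_{\tilde{X}}+(1-\epsilon)\tilde{B}+\epsilon\tilde{\Delta}+(1-\epsilon)\boldsymbol{\rm M}_{\tilde{X}}\sim_{\mathbb{R},Z}(1-\epsilon)\bigl(K_{\tilde{X}}+\tilde{B}+\boldsymbol{\rm M}_{\tilde{X}}\bigr)+\epsilon E'$,
and $E'\neq 0$ in general (it vanishes only when the crepant pullback of $K_{X}+\Delta$ is already effective). Hence steps of the perturbed MMP need not be steps of the desired one, termination does not transfer, and a good minimal model of the perturbed pair says nothing about semi-ampleness of the transform of $K_{\tilde{X}}+\tilde{B}+\boldsymbol{\rm M}_{\tilde{X}}$, nor does it give the discrepancy inequality for $(\tilde{X},\tilde{B},\boldsymbol{\rm M})$. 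If instead you insist on the crepant pullback $\tilde{\Delta}^{\flat}$ (so that $K_{\tilde{X}}+\tilde{\Delta}^{\flat}\sim_{\mathbb{R},Z}0$ exactly and proportionality holds), then $(1-\epsilon)\tilde{B}+\epsilon\tilde{\Delta}^{\flat}$ can fail to be effective at $f$-exceptional primes where $\tilde{B}$ has coefficient $0$ and $\tilde{\Delta}^{\flat}$ is negative, so the klt-with-big-boundary machinery again does not apply. This tension is exactly what the "standard via pullback and a small perturbation" sentence hides, and it is where the whole difficulty of the proposition lives.

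The paper's proof is built to resolve precisely this. It first writes $K_{\tilde{X}}+\tilde{B}+\boldsymbol{\rm M}_{\tilde{X}}=f^{*}(K_{X}+B+\boldsymbol{\rm M}_{X})+\tilde{E}-\tilde{F}$ and runs an MMP over $X$ to contract $\tilde{E}$, arriving at $\tilde{X}_{0}$ with $K_{\tilde{X}_{0}}+\tilde{B}_{0}+\boldsymbol{\rm M}_{\tilde{X}_{0}}\sim_{\mathbb{R},Z}-\tilde{F}_{0}$. It then adds $t_{i}(\tilde{G}_{0}+\tilde{F}_{0})$ to the boundary for a sequence $t_{i}\downarrow 0$, where $\tilde{G}$ collects the remaining exceptional primes, so that the boundary contains \emph{every} $f$-exceptional prime with positive coefficient; only then is the convex combination with the honest crepant pullback $\tilde{\Delta}'_{0}$ (which satisfies $K+\tilde{\Delta}'_{0}\sim_{\mathbb{R},Z}0$ on the nose, so proportionality holds) both effective and big, and \cite{bchm} applies to give a good minimal model for each $t_{i}$. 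Finally the perturbation is removed by passing to a subsequence along which the models are isomorphic in codimension one, exploiting $K+\tilde{B}_{1}+t\tilde{F}_{1}+\boldsymbol{\rm M}\sim_{\mathbb{R},Z}-(1-t)\tilde{F}_{1}$ to see that semi-ampleness for one $t$ gives it for $t=0$, and taking $t_{i}\to 0$ in the discrepancy comparison. Your proposal would need to reproduce some version of this two-stage contraction plus vanishing-perturbation argument; as written, the key step is missing.
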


\begin{proof}
We prove it with several steps. 
\begin{step2}\label{step1relmmp}
By hypothesis, we can write
$$K_{\tilde{X}}+\tilde{B}+\boldsymbol{\rm M}_{\tilde{X}}=f^{*}(K_{X}+B+\boldsymbol{\rm M}_{X})+\tilde{E}-\tilde{F}\sim_{\mathbb{R},Z}\tilde{E}-\tilde{F},$$
where $\tilde{E}\geq0$ and $\tilde{F}\geq0$ are $f$-exceptional $\mathbb{R}$-divisors having no common components. 
Let $\tilde{G}$ be the sum of all $f$-exceptional prime divisors which are neither components of $\tilde{B}$ nor $\tilde{F}$. 
By definition, the divisor $\tilde{B}+\tilde{G}+\tilde{F}$ contains all $f$-exceptional prime divisors. 
Since $(X,B,\boldsymbol{\rm M})$ is generalized lc, we have 
${\rm coeff}_{P}(\tilde{B}+\tilde{F}-\tilde{E})\leq1$ for all prime divisors $P$ on $\tilde{X}$. 
If $P$ is a component of $\tilde{F}$, then 
$${\rm coeff}_{P}(\tilde{B}+\tilde{F})={\rm coeff}_{P}(\tilde{B}+\tilde{F}-\tilde{E})\leq1$$ because $\tilde{E}$ and $\tilde{F}$ have no common components. 
If $P$ is not a component of $\tilde{F}$, then 
$${\rm coeff}_{P}(\tilde{B}+\tilde{F})={\rm coeff}_{P}(\tilde{B})\leq 1$$
by definition of $\tilde{B}$. 
Thus, all prime divisors $P$ on $\tilde{X}$ satisfy ${\rm coeff}_{P}(\tilde{B}+\tilde{F})\leq1$. 
Since $\bigl(\tilde{X},{\rm Supp}\tilde{B}\cup{\rm Ex}(f)\bigr)$ is log smooth, there is $t_{0}\in(0,1)$ such that $\bigl(\tilde{X},\tilde{B}+t_{0}(\tilde{G}+\tilde{F})\bigr)$ is a log smooth lc pair. 
Then $\bigl(\tilde{X},\tilde{B}+t_{0}(\tilde{G}+\tilde{F}), {\boldsymbol{\rm M}})$ is a generalized lc pair with the nef part ${\boldsymbol{\rm M}}=\overline{\boldsymbol{\rm M}_{\tilde{X}}}$, and we have 
$$K_{\tilde{X}}+\tilde{B}+t_{0}(\tilde{G}+\tilde{F})+\boldsymbol{\rm M}_{\tilde{X}}\sim_{\mathbb{R},Z}+\tilde{E}+t_{0}\tilde{G}-(1-t_{0})\tilde{F}.$$
Because $\tilde{E}$ and $\tilde{F}$ have no common components and the same property holds for $\tilde{G}$ and $\tilde{F}$ by definition of $\tilde{G}$, the divisors $\tilde{E}+t_{0}\tilde{G}$ and $\tilde{F}$ have no common components. 
\end{step2}

\begin{step2}\label{step2relmmp}
We run a $(K_{\tilde{X}}+\tilde{B}+\boldsymbol{\rm M}_{\tilde{X}})$-MMP over $X$ with scaling of an ample divisor. 
Since $\tilde{E}\geq 0$ is $f$-exceptional, by \cite[Proposition 3.8]{hanli} and regarding $(\tilde{X},\tilde{B},{\boldsymbol{\rm M}})$ as a generalized lc pair over $X$, after finitely many steps we get a morphism $f_{0}\colon \tilde{X}_{0} \to X$ such that $\tilde{E}$ is contracted by the map $\tilde{X}\dashrightarrow \tilde{X}_{0}$ of the MMP. 
Let $\tilde{B}_{0}$, $\tilde{F}_{0}$ and $\tilde{G}_{0}$ be the birational transforms of $\tilde{B}$, $\tilde{F}$ and $\tilde{G}$ on $\tilde{X}_{0}$, respectively. 
Then the relation
$K_{\tilde{X}_{0}}+\tilde{B}_{0}+\boldsymbol{\rm M}_{\tilde{X}_{0}}\sim_{\mathbb{R},Z}-\tilde{F}_{0}$
holds and the inequality 
\begin{equation*}\tag{$*$}\label{proof--prop--relmmp-(*)}
a\bigl(P,\tilde{X}, \tilde{B}+\boldsymbol{\rm M}_{\tilde{X}}\bigr)\leq a\bigl(P,\tilde{X}_{0}, \tilde{B}_{0}+\boldsymbol{\rm M}_{\tilde{X}_{0}}\bigr)
\end{equation*} 
holds for all prime divisors $P$ over $\tilde{X}$. 
By the first condition of Proposition \ref{prop--relmmp}, we see that $K_{\tilde{X}_{0}}+\tilde{B}_{0}+\tilde{M}_{0}$ is pseudo-effective over $Z$, so $\tilde{F}_{0}$ is vertical over $Z$. 
In particular, there is $\tilde{H}_{0}\geq0$ on $\tilde{X}_{0}$ such that $K_{\tilde{X}_{0}}+\tilde{B}_{0}+\boldsymbol{\rm M}_{\tilde{X}_{0}}\sim_{\mathbb{R},Z}\tilde{H}_{0}$. 
For any $t\in(0.t_{0}]$, we have  
\begin{equation*}\tag{$\spadesuit$}\label{proof--prop--relmmp-(spadesuit)}
K_{\tilde{X}_{0}}+\tilde{B}_{0}+t(\tilde{G}_{0}+\tilde{F}_{0})+\boldsymbol{\rm M}_{\tilde{X}_{0}}\sim_{\mathbb{R},Z}t\tilde{G}_{0}-(1-t)\tilde{F}_{0}\sim_{\mathbb{R},Z}t(\tilde{G}_{0}+\tilde{F}_{0})+\tilde{H}_{0}. 
\end{equation*} 

Replacing $t_{0}>0$ in Step \ref{step1relmmp} with a small one, we may assume that the birational map $\tilde{X}\dashrightarrow \tilde{X}_{0}$ is a sequence of steps of a $(K_{\tilde{X}}+\tilde{B}+t_{0}(\tilde{G}+\tilde{F})+\boldsymbol{\rm M}_{\tilde{X}})$-MMP. 
Then the generalized pair $\bigl(\tilde{X}_{0}, \tilde{B}_{0}+t_{0}(\tilde{G}_{0}+\tilde{F}_{0}),\boldsymbol{\rm M}\bigr)$ is a $\mathbb{Q}$-factorial generalized dlt pair. 
\end{step2}

\begin{step2}\label{step3relmmp}
Pick a strictly decreasing sequence of positive real numbers $\{t_{i}\}_{i\geq1}$ such that $0<t_{i}<t_{0}$ for any $i$ and ${\rm lim}_{i\to \infty}t_{i}=0$. 
In this step, we prove that for each $i\geq1$ there exists a sequence of steps of a $\bigl(K_{\tilde{X}_{0}}+\tilde{B}_{0}+t_{i}(\tilde{G}_{0}+\tilde{F}_{0})+\boldsymbol{\rm M}_{\tilde{X}_{0}}\bigr)$-MMP over $Z$
$$\bigl(\tilde{X}_{0},\tilde{B}_{0}+t_{i}(\tilde{G}_{0}+\tilde{F}_{0}),\boldsymbol{\rm M}\bigr)\dashrightarrow \bigl(\tilde{X}_{i},\tilde{B}_{i}+t_{i}(\tilde{G}_{i}+\tilde{F}_{i}),\boldsymbol{\rm M}\bigr)$$
such that $\tilde{G}_{0}$ is contracted by the map $\tilde{X}_{0}\dashrightarrow \tilde{X}_{i}$ (i.e., $\tilde{G}_{i}=0$) and $K_{\tilde{X}_{i}}+\tilde{B}_{i}+t_{i}\tilde{F}_{i}+\boldsymbol{\rm M}_{\tilde{X}_{i}}$ is semi-ample over $Z$. 
From now to the end of this step, we fix $i$.  

We run a $\bigl(K_{\tilde{X}_{0}}+\tilde{B}_{0}+t_{i}(\tilde{G}_{0}+\tilde{F}_{0})+\boldsymbol{\rm M}_{\tilde{X}_{0}}\bigr)$-MMP over $X$ with scaling of an ample divisor. 
By (\ref{proof--prop--relmmp-(spadesuit)}) in Step \ref{step2relmmp}, this MMP is an MMP for $t_{i}\tilde{G}_{0}-(1-t_{i})\tilde{F}_{0}$ over $X$. 
Regarding $\bigl(\tilde{X}_{0},\tilde{B}_{0}+t_{i}(\tilde{G}_{0}+\tilde{F}_{0}),\boldsymbol{\rm M}\bigr)$ as a $\mathbb{Q}$-factorial generalized dlt pair over $X$ and applying \cite[Proposition 3.8]{hanli}, after finitely many steps we get a morphism 
$f'_{0}\colon \tilde{X}'_{0} \to X$ such that $\tilde{G}_{0}$ is contracted by the birational map $\tilde{X}_{0}\dashrightarrow \tilde{X}'_{0}$ of the MMP. 
In this way, we obtain a sequence of steps of a $\bigl(K_{\tilde{X}_{0}}+\tilde{B}_{0}+t_{i}(\tilde{G}_{0}+\tilde{F}_{0})+\boldsymbol{\rm M}_{\tilde{X}_{0}}\bigr)$-MMP over $Z$
$$\bigl(\tilde{X}_{0},\tilde{B}_{0}+t_{i}(\tilde{G}_{0}+\tilde{F}_{0}),\boldsymbol{\rm M}\bigr)\dashrightarrow \bigl(\tilde{X}'_{0}, \tilde{B}'_{0}+t_{i}\tilde{F}'_{0},\boldsymbol{\rm M}\bigr).$$
Here, the divisors $\tilde{B}'_{0}$ and $\tilde{F}'_{0}$ are the birational transforms of $\tilde{B}_{0}$ and $\tilde{F}_{0}$, respectively. 

Recall that $\tilde{B}+\tilde{G}+\tilde{F}$ contains all $f$-exceptional prime divisors. 
Therefore, $\tilde{B}'_{0}+t_{i}\tilde{F}'_{0}$ contains all $f'_{0}$-exceptional prime divisors in its support. 
Let $\tilde{\Delta}'_{0}$ be an $\mathbb{R}$-divisor on $\tilde{X}'_{0}$ defined by an equation $K_{\tilde{X}'_{0}}+\tilde{\Delta}'_{0}=f'^{*}_{0}(K_{X}+\Delta)$, where $\Delta$ is as in the third condition of Proposition \ref{prop--relmmp}. 
Since $\Delta$ is big over $Z$ and $\tilde{B}'_{0}+t_{i}\tilde{F}'_{0}$ contains all $f'_{0}$-exceptional prime divisors in its support, there is $u>0$ such that 
$$\tilde{\Psi}'_{0}:=\frac{1}{1+u}(\tilde{B}'_{0}+t_{i}\tilde{F}'_{0})+\frac{u}{1+u}\tilde{\Delta}'_{0}$$ is effective and big over $Z$. 
Since $(X,\Delta)$ is klt and $\bigl(\tilde{X}'_{0}, \tilde{B}'_{0}+t_{i}\tilde{F}'_{0},\boldsymbol{\rm M}\bigr)$ is generalized lc, the generalized pair $\bigl(\tilde{X}'_{0}, \tilde{\Psi}'_{0},\frac{1}{1+u}\boldsymbol{\rm M}\bigr)$ is generalized klt. 
Since $\tilde{\Psi}'_{0}$ is big over $Z$ and $\bigl(\tilde{X}'_{0}, \tilde{\Psi}'_{0},\frac{1}{1+u}\boldsymbol{\rm M}\bigr)$ is generalized klt, 
there exists an $\mathbb{R}$-divisor $\tilde{\Gamma}'_{0}\geq0$ on $\tilde{X}'_{0}$ which is big over $Z$ such that $\tilde{\Gamma}'_{0}\sim_{\mathbb{R},Z}\tilde{\Psi}'_{0}+\tfrac{1}{1+u}\boldsymbol{\rm M}_{\tilde{X}'_{0}}$ and the pair $\bigl(\tilde{X}'_{0}, \tilde{\Gamma}'_{0})$ is klt. 
By \cite{bchm}, there is a sequence of steps of a $(K_{\tilde{X}'_{0}}+\tilde{\Gamma}'_{0})$-MMP over $Z$ terminating with a good minimal model. 
By construction, we have
\begin{equation*}
\begin{split}
K_{\tilde{X}'_{0}}+\tilde{\Gamma}'_{0}\sim_{\mathbb{R},Z}&K_{\tilde{X}'_{0}}+\tilde{\Psi}'_{0}+\frac{1}{1+u}\boldsymbol{\rm M}_{\tilde{X}'_{0}}\\
=&K_{\tilde{X}'_{0}}+\frac{1}{1+u}(\tilde{B}'_{0}+t_{i}\tilde{F}'_{0})+\frac{u}{1+u}\tilde{\Delta}'_{0}+\frac{1}{1+u}\boldsymbol{\rm M}_{\tilde{X}'_{0}}\\
=&\frac{1}{1+u}\bigl(K_{\tilde{X}'_{0}}+\tilde{B}'_{0}+t_{i}\tilde{F}'_{0}+\boldsymbol{\rm M}_{\tilde{X}'_{0}}\bigr)+\frac{u}{1+u}\bigl(K_{\tilde{X}'_{0}}+\tilde{\Delta}'_{0}\bigr)\\
\sim_{\mathbb{R},Z}&\frac{1}{1+u}\bigl(K_{\tilde{X}'_{0}}+\tilde{B}'_{0}+t_{i}\tilde{F}'_{0}+\boldsymbol{\rm M}_{\tilde{X}'_{0}}\bigr). 
\end{split}
\end{equation*}
The final relation follows from $K_{\tilde{X}'_{0}}+\tilde{\Delta}'_{0}=f'^{*}_{0}(K_{X}+\Delta)\sim_{\mathbb{R},Z}0$. 
From this discussion, there is a sequence of steps of a $\bigl(K_{\tilde{X}'_{0}}+\tilde{B}'_{0}+t_{i}\tilde{F}'_{0}+\boldsymbol{\rm M}_{\tilde{X}'_{0}}\bigr)$-MMP over $Z$
$$\bigl(\tilde{X}'_{0}, \tilde{B}'_{0}+t_{i}\tilde{F}'_{0},\boldsymbol{\rm M}\bigr)\dashrightarrow \bigl(\tilde{X}_{i}, \tilde{B}_{i}+t_{i}\tilde{F}_{i}, \boldsymbol{\rm M}\bigr)$$
such that $K_{\tilde{X}_{i}}+\tilde{B}_{i}+t_{i}\tilde{F}_{i}+\boldsymbol{\rm M}_{\tilde{X}_{i}}$ is semi-ample over $Z$. 
Then the composition 
$$\bigl(\tilde{X}_{0},\tilde{B}_{0}+t_{i}(\tilde{G}_{0}+\tilde{F}_{0}),\boldsymbol{\rm M}\bigr)\dashrightarrow \bigl(\tilde{X}'_{0}, \tilde{B}'_{0}+t_{i}\tilde{F}'_{0},\boldsymbol{\rm M}\bigr)\dashrightarrow \bigl(\tilde{X}_{i}, \tilde{B}_{i}+t_{i}\tilde{F}_{i},\boldsymbol{\rm M}\bigr)$$
is the desired $\bigl(K_{\tilde{X}_{0}}+\tilde{B}_{0}+t_{i}(\tilde{G}_{0}+\tilde{F}_{0})+\boldsymbol{\rm M}_{\tilde{X}_{0}}\bigr)$-MMP over $Z$. 
We finish this step. 
\end{step2}

\begin{step2}\label{step4relmmp}
With this step we complete the proof. 

For any $i\geq 1$, we denote a sequence of steps of a $\bigl(K_{\tilde{X}_{0}}+\tilde{B}_{0}+t_{i}(\tilde{G}_{0}+\tilde{F}_{0})+\boldsymbol{\rm M}_{\tilde{X}_{0}}\bigr)$-MMP over $Z$ constructed in Step \ref{step3relmmp} by
$$\bigl(\tilde{X}_{0},\tilde{B}_{0}+t_{i}(\tilde{G}_{0}+\tilde{F}_{0}),\boldsymbol{\rm M}\bigr)\dashrightarrow \bigl(\tilde{X}_{i},\tilde{B}_{i}+t_{i}\tilde{F}_{i}, \boldsymbol{\rm M}).$$
We recall that $\tilde{G}_{0}$ is contracted by the map $\tilde{X}_{0}\dashrightarrow \tilde{X}_{i}$ and $K_{\tilde{X}_{i}}+\tilde{B}_{i}+t_{i}\tilde{F}_{i}+\boldsymbol{\rm M}_{\tilde{X}_{i}}$ is semi-ample over $Z$. 
By (\ref{proof--prop--relmmp-(spadesuit)}) in Step \ref{step2relmmp}, we see that prime divisors contracted by the map $\tilde{X}_{0}\dashrightarrow \tilde{X}_{i}$ are components of $\tilde{G}_{0}+\tilde{F}_{0}+\tilde{H}_{0}$, which are independent of $t_{i}$. 
Therefore, by replacing $\{t_{i}\}_{i\geq 1}$ with a subsequence, we may assume that all maps $\tilde{X}_{0}\dashrightarrow \tilde{X}_{i}$ contract the same divisors, so all $X_{i}$ are isomorphic in codimension one. 

We prove that the birational contraction $\tilde{X}\dashrightarrow \tilde{X}_{1}$ satisfies the two conditions of Proposition \ref{prop--relmmp}.   
We first check that $K_{\tilde{X}_{1}}+\tilde{B}_{1}+\boldsymbol{\rm M}_{\tilde{X}_{1}}$ is semi-ample over $Z$. 
Because $\tilde{G}$ is contracted by the map $\tilde{X}\dashrightarrow \tilde{X}_{1}$, with (\ref{proof--prop--relmmp-(spadesuit)}) in Step \ref{step2relmmp}, we have
\begin{equation*}
\begin{split}
K_{\tilde{X}_{1}}+\tilde{B}_{1}+t_{1}\tilde{F}_{1}+\boldsymbol{\rm M}_{\tilde{X}_{1}}\sim_{\mathbb{R},Z}-(1-t_{1})\tilde{F}_{1}.
\end{split}
\end{equation*}
From this relation and the fact that $K_{\tilde{X}_{1}}+\tilde{B}_{1}+t_{1}\tilde{F}_{1}+\boldsymbol{\rm M}_{\tilde{X}_{1}}$ is semi-ample over $Z$, we see that 
\begin{equation*}
\begin{split}
K_{\tilde{X}_{1}}+\tilde{B}_{1}+\boldsymbol{\rm M}_{\tilde{X}_{1}}\sim_{\mathbb{R},Z}-\tilde{F}_{1}\sim_{\mathbb{R},Z}\frac{1}{1-t_{1}}\bigl(K_{\tilde{X}_{1}}+\tilde{B}_{1}+t_{1}\tilde{F}_{1}+\boldsymbol{\rm M}_{\tilde{X}_{1}}\bigr)
\end{split}
\end{equation*}
is semi-ample over $Z$. 
Therefore, the first condition of Proposition \ref{prop--relmmp} holds true. 

Next, we prove $a(P,\tilde{X},\tilde{B}+\boldsymbol{\rm M}_{\tilde{X}})\leq a(P,\tilde{X}_{1},\tilde{B}_{1}+\boldsymbol{\rm M}_{\tilde{X}_{1}})$ for all prime divisors $P$ over $\tilde{X}$. 
Since $K_{\tilde{X}_{1}}+\tilde{B}_{1}+\boldsymbol{\rm M}_{\tilde{X}_{1}}$ is semi-ample over $Z$, we see that
\begin{equation*}
\begin{split}
K_{\tilde{X}_{1}}+\tilde{B}_{1}+t_{i}\tilde{F}_{1}+\boldsymbol{\rm M}_{\tilde{X}_{1}}\sim_{\mathbb{R},Z}-(1-t_{i})\tilde{F}_{1}\sim_{\mathbb{R},Z}(1-t_{i})\bigl(K_{\tilde{X}_{1}}+\tilde{B}_{1}+\boldsymbol{\rm M}_{\tilde{X}_{1}}\bigr)
\end{split}
\end{equation*}
is semi-ample over $Z$ for any $i$. 
By construction, the induced birational map $\tilde{X}_{1}\dashrightarrow \tilde{X}_{i}$ are isomorphic in codimension one and $\tilde{B}_{i}$, $\tilde{F}_{i}$, and $\boldsymbol{\rm M}_{\tilde{X}_{i}}$ are the birational transforms of $\tilde{B}_{1}$, $\tilde{F}_{1}$, and $\boldsymbol{\rm M}_{\tilde{X}_{1}}$ on $\tilde{X}_{i}$, respectively. 
Since $K_{\tilde{X}_{i}}+\tilde{B}_{i}+t_{i}\tilde{F}_{i}+\boldsymbol{\rm M}_{\tilde{X}_{i}}$ is semi-ample over $Z$, by taking a common resolution of $\tilde{X}_{1}\dashrightarrow \tilde{X}_{i}$ and the negativity lemma, we obtain
$$a(P,\tilde{X}_{i},\tilde{B}_{i}+t_{i}\tilde{F}_{i}+\boldsymbol{\rm M}_{\tilde{X}_{i}})= a(P,\tilde{X}_{1},\tilde{B}_{1}+t_{i}\tilde{F}_{1}+\boldsymbol{\rm M}_{\tilde{X}_{1}})$$
for any $i>1$. 
Combining this with construction of the map $\tilde{X}_{0}\dashrightarrow \tilde{X}_{i}$ in Step \ref{step3relmmp}, we obtain $a(P,\tilde{X}_{0},\tilde{B}_{0}+t_{i}(\tilde{G}_{0}+\tilde{F}_{0})+\boldsymbol{\rm M}_{\tilde{X}_{0}})\leq a(P,\tilde{X}_{1},\tilde{B}_{1}+t_{i}\tilde{F}_{1}+\boldsymbol{\rm M}_{\tilde{X}_{1}})$ for any $i$. 
By taking the limit $i\to \infty$, we have 
$$a(P,\tilde{X}_{0},\tilde{B}_{0}+\boldsymbol{\rm M}_{\tilde{X}_{0}})\leq a(P,\tilde{X}_{1},\tilde{B}_{1}+\boldsymbol{\rm M}_{\tilde{X}_{1}}).$$
We combine this inequality and (\ref{proof--prop--relmmp-(*)}) in Step \ref{step2relmmp}, then we obtain
$$a\bigl(P,\tilde{X}, \tilde{B}+\boldsymbol{\rm M}_{\tilde{X}}\bigr)\leq a(P,\tilde{X}_{0},\tilde{B}_{0}+\boldsymbol{\rm M}_{\tilde{X}_{0}})\leq a(P,\tilde{X}_{1},\tilde{B}_{1}+\boldsymbol{\rm M}_{\tilde{X}_{1}}).$$
In this way, we see that $\tilde{X}\dashrightarrow \tilde{X}_{1}$ is the desired birational contraction.  
\end{step2}
We complete the proof. 
\end{proof}

\begin{rem}
When $B$ and ${\boldsymbol{\rm M}}$ have rational coefficients, Proposition \ref{prop--relmmp} follows from \cite[Lemma 4.10]{birkar-connectedness} and arguments in \cite[Proof of Lemma 4.14]{birkar-connectedness}. 
These arguments in \cite[Proofs of Lemma 4.10 and Lemma 4.14]{birkar-connectedness} work in our situation. 
We note that the arguments are different from our proof. 
\end{rem}

\begin{prop}\label{prop--genabklt}
Let $\pi \colon X\to Z$ be a morphism of normal projective varieties, and let $(X,B, \boldsymbol{\rm M})$ be a projective generalized klt pair. 
Suppose that
\begin{itemize}
\item
$\boldsymbol{\rm M}$ is a finite $\mathbb{R}_{>0}$-linear combination of b-nef $\mathbb{Q}$-b-Cartier $\mathbb{Q}$-b-divisors, and 
\item
there exists an open subset $U\subset Z$ such that $(K_{X}+B+\boldsymbol{\rm M}_{X})|_{\pi^{-1}(U)}\sim_{\mathbb{R},U}0$. 
\end{itemize} 
Let $A_{Z}$ be an ample $\mathbb{R}$-divisor on $Z$.  
 
Then, $K_{X}+B+\boldsymbol{\rm M}_{X}+\pi^{*}A_{Z}$ is abundant. 
Furthermore, when $K_{X}+B+\boldsymbol{\rm M}_{X}+\pi^{*}A_{Z}$ is pseudo-effective, the divisor birationally has the Nakayama--Zariski decomposition with semi-ample positive part. 
\end{prop}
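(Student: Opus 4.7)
The plan is to reduce the problem to an application of BCHM on the base $Z$ through a canonical bundle formula for generalized lc-trivial fibrations. After Stein factorization I may assume $\pi$ is a contraction (the pullback of the ample $A_Z$ through the finite part remains ample). Take a common log resolution $f\colon \tilde{X}\to X$ of $(X,\Supp B)$ such that $\boldsymbol{\rm M}$ descends to $\tilde{X}$ and the rational map to $Z$ extends to a morphism $\tilde{\pi}\colon \tilde{X}\to \tilde{Z}$ onto a resolution $g\colon \tilde{Z}\to Z$. Writing $K_{\tilde X}+\tilde B+\boldsymbol{\rm M}_{\tilde X}=f^{*}(K_X+B+\boldsymbol{\rm M}_X)+\tilde E$ with $\tilde E\geq 0$ exceptional, the generalized pair $(\tilde X,\tilde B,\boldsymbol{\rm M})$ is still generalized klt, and the hypothesis on $U$ gives $K_{\tilde X}+\tilde B+\boldsymbol{\rm M}_{\tilde X}\sim_{\mathbb{R}}0$ over a nonempty open subset of $\tilde Z$.

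Next I would apply the canonical bundle formula for generalized klt lc-trivial fibrations (e.g.\ \cite{filcanbundleformula}, \cite{hanliu-can-bundle-formula}) to produce a generalized klt pair $(\tilde Z,B_{\tilde Z},\boldsymbol{\rm N})$, with $\boldsymbol{\rm N}$ again a positive $\mathbb{R}$-linear combination of b-nef $\mathbb{Q}$-b-Cartier $\mathbb{Q}$-b-divisors, such that
\[
K_{\tilde X}+\tilde B+\boldsymbol{\rm M}_{\tilde X}\sim_{\mathbb{R}} \tilde\pi^{*}(K_{\tilde Z}+B_{\tilde Z}+\boldsymbol{\rm N}_{\tilde Z}).
\]
Pulling back $A_Z$ yields $g^{*}A_Z$ on $\tilde Z$. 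By Kodaira's lemma, $g^{*}A_Z\sim_{\mathbb{Q}}A'+E'$ with $A'$ ample and $E'\geq 0$; absorbing a small multiple of $E'$ into $B_{\tilde Z}$, I obtain a generalized klt pair on $\tilde Z$ polarized by the ample divisor $A'$.

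At this point I would invoke the reduction to usual klt polarized pairs noted in the introduction: in the generalized klt setting with an ample polarization one can replace the b-nef nef part by a general element of its $\mathbb{R}$-linear series (using the positivity from $A'$), producing a genuine klt polarized pair to which \cite{bchm} applies. BCHM then provides a good minimal model for $K_{\tilde Z}+B_{\tilde Z}+\boldsymbol{\rm N}_{\tilde Z}+A'$, so this divisor is abundant and, when pseudo-effective, admits a birational Nakayama--Zariski decomposition with semiample positive part. Pulling this structure back along $\tilde\pi$ and descending via $f$ gives the same properties for $K_X+B+\boldsymbol{\rm M}_X+\pi^{*}A_Z$, since $\tilde\pi^{*}$ preserves invariant Iitaka and numerical dimensions under the displayed relation, and the correction by $\tilde E$ and by the pullback of $E'$ is effective and exceptional, hence invisible to both $\kappa_{\iota}$ and $\kappa_{\sigma}$.

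The main obstacle I expect lies in the canonical bundle formula step: ensuring that the output base pair is genuinely generalized klt with nef part a positive $\mathbb{R}$-linear combination of b-nef $\mathbb{Q}$-b-Cartier divisors, and that the absorption of $E'$ into $B_{\tilde Z}$ preserves the generalized klt condition. Once this is handled, the remaining steps, namely the reduction to a usual klt polarized pair on $\tilde Z$ and the transfer of the resulting positivity back up to $X$, are standard.
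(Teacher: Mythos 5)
Your overall architecture (resolve, push the problem down to the base via the canonical bundle formula, perturb into a klt pair on the base, apply \cite{bchm}, pull back) matches the paper's Step 3, but there is a genuine gap before you ever get there: you apply the canonical bundle formula to the fibration $\tilde{\pi}\colon \tilde{X}\to \tilde{Z}$, which is \emph{not} lc-trivial. The hypothesis only gives $(K_{X}+B+\boldsymbol{\rm M}_{X})|_{\pi^{-1}(U)}\sim_{\mathbb{R},U}0$ over an open subset $U\subset Z$; over all of $\tilde{Z}$ one only has $K_{\tilde{X}}+\tilde{B}+\boldsymbol{\rm M}_{\tilde{X}}\sim_{\mathbb{R},\tilde{Z}}\tilde{F}$ for some effective $\tilde{F}$ with a horizontal part and a vertical part supported over $\tilde{Z}\setminus U$ (plus the exceptional correction $\tilde{E}$). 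The canonical bundle formula of \cite{filcanbundleformula} requires $K_{\tilde{X}}+\tilde{B}+\boldsymbol{\rm M}_{\tilde{X}}\sim_{\mathbb{R},\tilde{Z}}0$ globally, so your second paragraph does not go through as written. You flagged the canonical bundle formula as the delicate step, but for the wrong reason: the issue is not whether the output base pair is generalized klt, it is that the formula's hypothesis fails.

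Closing this gap is exactly what occupies Steps 1 and 2 of the paper's proof, and it is not routine. One first applies weak semistable reduction \cite{ak} to arrange that $\tilde{\pi}\colon\tilde{X}\to\tilde{Z}$ is equidimensional with $\tilde{Z}$ smooth; this guarantees that, after subtracting suitable pullbacks $\sum_{P}\nu_{P}\tilde{\pi}^{*}P$ from the vertical part $\tilde{F}_{v}$, what remains is \emph{very exceptional} over $\tilde{Z}$. One then runs a $(K_{\tilde{X}}+\tilde{B}+\boldsymbol{\rm M}_{\tilde{X}})$-MMP over $\tilde{Z}$: the horizontal part $\tilde{F}_{h}$ is contracted because the generic fibre has a relatively trivial minimal model (this uses \cite[Theorem 4.1]{hanli} applied over $U$), and the very exceptional vertical part is contracted by \cite[Proposition 3.8]{hanli}. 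Only on the resulting model $\tilde{X}_{m}$, where $K_{\tilde{X}_{m}}+\tilde{B}_{m}+\boldsymbol{\rm M}_{\tilde{X}_{m}}\sim_{\mathbb{R},\tilde{Z}}0$ holds over all of $\tilde{Z}$, can the canonical bundle formula be invoked (after decomposing $\boldsymbol{\rm M}$ and $\tilde{B}_{m}$ into rational pieces by a convex-geometry argument, since \cite[Theorem 1.4]{filcanbundleformula} is stated for $\mathbb{Q}$-coefficients). The MMP steps preserve $\kappa_{\iota}$ and $\kappa_{\sigma}$, so the conclusion transfers back; but without the equidimensionality and the relative MMP your argument has no way to dispose of the divisor $\tilde{F}$, and the reduction to the base collapses.
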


\begin{proof}
The argument is very similar to \cite[Proof of Lemma 3.1]{has-mmp}. 
\begin{step3}\label{step1genabklt}
By taking a Stein factorization of $\pi$, we may assume that $\pi$ is a contraction. 
Let $\overline{f}\colon \overline{X}\to X$ be a log resolution of $(X,{\rm Supp}B)$ such that $\boldsymbol{\rm M}=\overline{\boldsymbol{\rm M}_{\overline{X}}}$. 
By construction, we can find $\mathbb{R}$-divisors $\overline{B}$ and $\overline{E}$ on $\overline{X}$ such that $(\overline{X},\overline{B})$ is a log smooth klt pair, $\overline{E}$ is $\overline{f}$-exceptional, and
$$K_{\overline{X}}+\overline{B}+\boldsymbol{\rm M}_{\overline{X}}=\overline{f}^{*}(K_{X}+B+\boldsymbol{\rm M}_{X})+\overline{E}.$$ 
Note that $\overline{B}$ and $\overline{E}$ may have common components. 
We apply the weak semistable reduction (\cite[Proof of Theorem 2.1]{ak}, \cite[Proposition 4.4]{ak} and \cite[Remark 4.5]{ak}) to the morphism $(\overline{X},\overline{B})\to Z$. 
Then we have the following diagram 
 $$
\xymatrix{
\overline{X} \ar[d]_{\pi\circ \overline{f}}&\tilde{X} \ar[l]_{\tilde{f}}\ar[d]^{\tilde{\pi}}\\
Z&\tilde{Z}\ar[l]^{g}
}
$$
with normal projective varieties $\tilde{X}$ and a smooth projective variety $\tilde{Z}$ such that 
\begin{itemize}
\item
$\tilde{f}\colon \tilde{X}\to \overline{X}$ and $g\colon \tilde{Z}\to Z$ are birational morphisms, 
\item
$\tilde{\pi}\colon \tilde{X}\to \tilde{Z}$ is a contraction and all fibers of $\tilde{\pi}$ have the same dimensions, and 
\item 
$(\tilde{X},0)$ is $\mathbb{Q}$-factorial klt and $\bigl(\tilde{X},{\rm Supp}\tilde{f}_{*}^{-1}\overline{B}\cup {\rm Ex}(\tilde{f})\bigr)$ is an lc pair, here we regard ${\rm Supp}\tilde{f}_{*}^{-1}\overline{B}\cup {\rm Ex}(\tilde{f})$ as a reduced divisor on $\tilde{X}$. 
\end{itemize}
By the third condition, there is an $\mathbb{R}$-divisor $\tilde{B}$ on $\tilde{X}$ such that $(\tilde{X},\tilde{B})$ is a $\mathbb{Q}$-factorial klt pair and $K_{\tilde{X}}+\tilde{B}-\tilde{f}^{*}(K_{\overline{X}}+\overline{B})$ is effective and $\tilde{f}$-exceptional. 
Set $f=\overline{f}\circ \tilde{f}\colon \tilde{X} \to X$. 
Because $\boldsymbol{\rm M}=\overline{\boldsymbol{\rm M}_{\overline{X}}}$ and $\boldsymbol{\rm M}$ is b-nef, $\boldsymbol{\rm M}_{\tilde{X}}$ is nef, so $\bigl(\tilde{X},\tilde{B}, \boldsymbol{\rm M}\bigr)$ is a generalized klt pair.  
Furthermore, by construction of $\overline{B}$ and $\tilde{B}$, we may write 
$$K_{\tilde{X}}+\tilde{B}+\boldsymbol{\rm M}_{\tilde{X}}=f^{*}(K_{X}+B+\boldsymbol{\rm M}_{X})+\tilde{E}$$
for some $f$-exceptional $\mathbb{R}$-divisor $\tilde{E}\geq0$. 

In this way, we obtain a diagram
 $$
\xymatrix{
X \ar[d]_{\pi}&\tilde{X}\ar[l]_{f}\ar[d]^{\tilde{\pi}}\\
Z&\tilde{Z}\ar[l]^{g}
}
$$
and a projective $\mathbb{Q}$-factorial generalized klt pair $\bigl(\tilde{X},\tilde{B}, \boldsymbol{\rm M}\bigr)$ with $\boldsymbol{\rm M}=\overline{\boldsymbol{\rm M}_{\tilde{X}}}$ such that 
\begin{enumerate}
\item \label{proof--prop--genabklt-(1)}
$f \colon \tilde{X}\to \overline{X}$ and $g\colon \tilde{Z}\to Z$ are birational morphisms and $\tilde{Z}$ is smooth, 
\item \label{proof--prop--genabklt-(2)}
$\tilde{\pi}\colon \tilde{X}\to \tilde{Z}$ is a contraction and all fibers of $\tilde{\pi}$ have the same dimensions, and 
\item \label{proof--prop--genabklt-(3)}
$K_{\tilde{X}}+\tilde{B}+\boldsymbol{\rm M}_{\tilde{X}}=f^{*}(K_{X}+B+\boldsymbol{\rm M}_{X})+\tilde{E}$ for some $f$-exceptional $\mathbb{R}$-divisor $\tilde{E}\geq0$. 
\end{enumerate}
By property  (\ref{proof--prop--genabklt-(3)}), it follows that $K_{X}+B+\boldsymbol{\rm M}_{X}+\pi^{*}A_{Z}$ is abundant and birationally has the Nakayama--Zariski decomposition with semi-ample positive part if and only if  the same assertion holds for $K_{\tilde{X}}+\tilde{B}+\boldsymbol{\rm M}_{\tilde{X}}+\tilde{\pi}^{*}g^{*}A_{Z}$. 
\end{step3}

\begin{step3}\label{step2genabklt}
By (\ref{proof--prop--genabklt-(3)}) in Step \ref{step1genabklt} and the hypothesis that $(K_{X}+B+\boldsymbol{\rm M}_{X})|_{\pi^{-1}(U)}\sim_{\mathbb{R},U}0$ for some open $U\subset Z$, there exist an $\mathbb{R}$-divisor $\tilde{F}\geq0$ on $\tilde{X}$ such that $K_{\tilde{X}}+\tilde{B}+\boldsymbol{\rm M}_{\tilde{X}}\sim_{\mathbb{R},\tilde{Z}}\tilde{F}$. 
We can write $\tilde{F}=\tilde{F}_{h}+\tilde{F}_{v}$ such that all components of $\tilde{F}_{h}$ dominates $\tilde{Z}$ and all components $\tilde{F}_{v}$ are vertical over $\tilde{Z}$. 
We recall condition (\ref{proof--prop--genabklt-(2)}) in Step \ref{step1genabklt}, which says that all fibers of $\tilde{\pi}$ have the same dimensions.  
Therefore, the image of any component of $\tilde{F}_{v}$ on $\tilde{Z}$ is a divisor. 
Since $\tilde{Z}$ is smooth, we can consider 
$$\nu_{P}:={\rm sup}\set{\nu\in\mathbb{R}_{\geq0} \mid \text{$\tilde{F}_{v}-\nu \tilde{\pi}^{*}P\geq0$}}$$
for any prime divisor $P$ on $\tilde{Z}$. 
Then it is easy to see that $\nu_{P}>0$ with only finitely many prime divisors $P$. 
By (\ref{proof--prop--genabklt-(2)}) in Step \ref{step1genabklt}, we see that $\tilde{F}_{v}-\sum_{P}\nu_{P} \tilde{\pi}^{*}P$ is an effective $\mathbb{R}$-divisor which is very exceptional over $\tilde{Z}$. 
By replacing $\tilde{F}_{v}$ with $\tilde{F}_{v}-\sum_{P}\nu_{P} \tilde{\pi}^{*}P$, we may assume that $\tilde{F}_{v}$ is very exceptional over $\tilde{Z}$. 

We run a $(K_{\tilde{X}}+\tilde{B}+\boldsymbol{\rm M}_{\tilde{X}})$-MMP over $\tilde{Z}$ with scaling of an ample divisor 
$$(\tilde{X},\tilde{B},\boldsymbol{\rm M})=(\tilde{X}_{0},\tilde{B}_{0},\boldsymbol{\rm M})\dashrightarrow \cdots \dashrightarrow (\tilde{X}_{i},\tilde{B}_{i},\boldsymbol{\rm M})\dashrightarrow \cdots.$$
We prove that after finitely many steps we reach a model $(\tilde{X}_{m},\tilde{B}_{m},\boldsymbol{\rm M})$ such that $K_{\tilde{X}_{m}}+\tilde{B}_{m}+\boldsymbol{\rm M}_{\tilde{X}_{m}}\sim_{\mathbb{R},\tilde{Z}}0$. 
We recall the hypothesis that $(K_{X}+B+\boldsymbol{\rm M}_{X})|_{\pi^{-1}(U)}\sim_{\mathbb{R},U}0$ for some open $U\subset Z$. 
By shrinking $U$, we may assume $g\colon \tilde{Z}\to Z$ is an isomorphism over $U$,  so $U$ can be thought of an open subset of $\tilde{Z}$. 
For each $i$, let $\tilde{V}_{i}$ be the inverse image of $U$ to $\tilde{X}_{i}$. 
Then, we see that 
$$(\tilde{V}_{0},\tilde{B}_{0}|_{\tilde{V}_{0}}, \boldsymbol{\rm M}|_{\tilde{V}_{0}})\dashrightarrow \cdots \dashrightarrow (\tilde{V}_{i},\tilde{B}_{i}|_{\tilde{V}_{i}}, \boldsymbol{\rm M}|_{\tilde{V}_{i}})\dashrightarrow \cdots$$ 
is a sequence of steps of a $(K_{\tilde{V}_{0}}+\tilde{B}_{0}|_{\tilde{V}_{0}}+\boldsymbol{\rm M}_{\tilde{X}_{0}}|_{\tilde{V}_{0}})$-MMP over $U$, and the generalized klt pair $(\tilde{V}_{0},\tilde{B}_{0}|_{\tilde{V}_{0}},\boldsymbol{\rm M}|_{\tilde{V}_{0}})$ has a relatively trivial minimal model over $U$ because of (\ref{proof--prop--genabklt-(3)}) in Step \ref{step1genabklt} and the relation $(K_{X}+B+\boldsymbol{\rm M}_{X})|_{\pi^{-1}(U)}\sim_{\mathbb{R},U}0$. 
By \cite[Theorem 4.1]{hanli}, for any $i\gg0$ we obtain $K_{\tilde{V}_{i}}+\tilde{B}_{i}|_{\tilde{V}_{i}}+\boldsymbol{\rm M}_{\tilde{X}_{i}}|_{\tilde{V}_{i}}\sim_{\mathbb{R},U}0$. 
Because we have
$K_{\tilde{X}}+\tilde{B}+\boldsymbol{\rm M}_{\tilde{X}}\sim_{\mathbb{R},\tilde{Z}}\tilde{F}_{h}+\tilde{F}_{v}$ and each component of $\tilde{F}_{h}$ dominates over $\tilde{Z}$, 
we see that $\tilde{F}_{h}$ is contracted by the map $\tilde{X}\dashrightarrow \tilde{X}_{i}$ for any $i\gg0$. 
Pick $m>0$ such that $\tilde{F}_{h}$ is contracted by $\tilde{X}\dashrightarrow \tilde{X}_{m}$. 
Since the  $(K_{\tilde{X}}+\tilde{B}+\boldsymbol{\rm M}_{\tilde{X}})$-MMP over $\tilde{Z}$ occurs only in ${\rm Supp}(\tilde{F}_{h}+\tilde{F}_{v})$, the birational transform of $\tilde{F}_{v}$ on $\tilde{X}_{m}$ is very exceptional over $\tilde{Z}$. 
Applying \cite[Proposition 3.8]{hanli} to $\bigl(\tilde{X}_{m}, \tilde{B}_{m},\boldsymbol{\rm M}\bigr)/\tilde{Z}$ and replacing $m$, we see that $\tilde{F}_{v}$ is also contracted by $\tilde{X}\dashrightarrow \tilde{X}_{m}$. 

From this discussion, there exists $m>0$ such that $K_{\tilde{X}_{m}}+\tilde{B}_{m}+\boldsymbol{\rm M}_{\tilde{X}_{m}}\sim_{\mathbb{R},\tilde{Z}}0$. 
\end{step3}

\begin{step3}\label{step3genabklt} 
We denote the natural murphism $\tilde{X}\to \tilde{Z}$ by $\tilde{\pi}_{m}$. 
Now we have the following diagram
 $$
\xymatrix{
X \ar[d]_{\pi}&\tilde{X}\ar[l]_{f}\ar[d]^{\tilde{\pi}}\ar@{-->}[r]&\tilde{X}_{m}\ar[dl]^{\tilde{\pi}_{m}}\\
Z&\tilde{Z}\ar[l]^{g}
}
$$
and a projective generalized klt pair $\bigl(\tilde{X}_{m}, \tilde{B}_{m},\boldsymbol{\rm M}\bigr)$ such that $K_{\tilde{X}_{m}}+\tilde{B}_{m}+\boldsymbol{\rm M}_{\tilde{X}_{m}}\sim_{\mathbb{R},\tilde{Z}}0$. 
Since $\tilde{X}\dashrightarrow \tilde{X}_{m}$  is also a sequence of steps of a $(K_{\tilde{X}}+\tilde{B}+\tilde{\pi}^{*}g^{*}A_{Z}+\boldsymbol{\rm M}_{\tilde{X}})$-MMP,  the divisor $K_{\tilde{X}}+\tilde{B}+\boldsymbol{\rm M}_{\tilde{X}}+\tilde{\pi}^{*}g^{*}A_{Z}$ is abundant and birationally has the Nakayama--Zariski decomposition with semi-ample positive part if and only if the same assertion holds for $K_{\tilde{X}_{m}}+\tilde{B}_{m}+\tilde{\pi}_{m}^{*}g^{*}A_{Z}+\boldsymbol{\rm M}_{\tilde{X}_{m}}$. 

We recall the first condition of Proposition \ref{prop--genabklt} saying that $\boldsymbol{\rm M}$ is a finite $\mathbb{R}_{>0}$-linear combination of b-nef $\mathbb{Q}$-b-Cartier $\mathbb{Q}$-b-divisors. 
By an argument using convex geometry, we can find finitely many positive real numbers $r_{1},\cdots, r_{l}$ and projective generalized klt pairs $\bigl(\tilde{X}_{m}, \tilde{B}^{(1)}_{m},\boldsymbol{\rm M}^{(1)}\bigr), \cdots, \bigl(\tilde{X}_{m}, \tilde{B}^{(l)}_{m},\boldsymbol{{\rm M}}^{(l)}\bigr) $ such that 
\begin{itemize}
\item
$\tilde{B}^{(j)}_{m}$ is a $\mathbb{Q}$-divisors and $\boldsymbol{\rm M}^{(j)}$ is a $\mathbb{Q}$-b-divisors for every $1\leq j \leq l$, 
\item
$\sum_{j=1}^{l}r_{j}=1$, $\sum_{j=1}^{l}r_{j}\tilde{B}^{(j)}_{m}=\tilde{B}_{m}$ and $\sum_{j=1}^{l}r_{j}\boldsymbol{{\rm M}}^{(j)}=\boldsymbol{\rm M}$, and
\item
$K_{\tilde{X}_{m}}+\tilde{B}^{(j)}_{m}+\boldsymbol{\rm M}^{(j)}\sim_{\mathbb{Q},\tilde{Z}}0$ for any $1\leq j \leq l$. 
\end{itemize}
By applying the canonical bundle formula for generalized pairs \cite[Theorem 1.4]{filcanbundleformula} to each $\bigl(\tilde{X}_{m}, \tilde{B}^{(j)}_{m},\boldsymbol{{\rm M}}^{(j)}\bigr)\to \tilde{Z}$ and using the above conditions, we can find a generalized klt pair $(\tilde{Z},B_{\tilde{Z}},\boldsymbol{\rm N})$ such that $K_{\tilde{X}_{m}}+\tilde{B}_{m}+\boldsymbol{\rm M}_{\tilde{X}_{m}}\sim_{\mathbb{R}}\tilde{\pi}_{m}^{*}(K_{\tilde{Z}}+B_{\tilde{Z}}+\boldsymbol{\rm N}_{\tilde{Z}})$. 
Since $g^{*}A_{Z}$ is nef and big, we can find a big $\mathbb{R}$-divisor $\Delta_{\tilde{Z}}$ on $\tilde{Z}$ such that $K_{\tilde{Z}}+B_{\tilde{Z}}+\boldsymbol{\rm N}_{\tilde{Z}}+g^{*}A_{Z}\sim_{\mathbb{R}}K_{\tilde{Z}}+\Delta_{\tilde{Z}}$ and $(\tilde{Z},\Delta_{\tilde{Z}})$ is klt. 
By \cite{bchm}, $(\tilde{Z},\Delta_{\tilde{Z}})$ has a good minimal model or a Mori fiber space. 
In particular, $K_{\tilde{Z}}+\Delta_{\tilde{Z}}$ is abundant, and the divisor birationally has the Nakayama--Zariski decomposition with semi-ample positive part when it is pseudo-effective. 
Because 
$$K_{\tilde{X}_{m}}+\tilde{B}_{m}+\tilde{\pi}_{m}^{*}g^{*}A_{Z}+\boldsymbol{\rm M}_{\tilde{X}_{m}}\sim_{\mathbb{R}}\tilde{\pi}_{m}^{*}(K_{\tilde{Z}}+\Delta_{\tilde{Z}}),$$
we see that $K_{\tilde{X}_{m}}+\tilde{B}_{m}+\tilde{\pi}_{m}^{*}g^{*}A_{Z}+\boldsymbol{\rm M}_{\tilde{X}_{m}}$ is abundant.
When $K_{\tilde{X}_{m}}+\tilde{B}_{m}+\tilde{\pi}_{m}^{*}g^{*}A_{Z}+\boldsymbol{\rm M}_{\tilde{X}_{m}}$ is pseudo-effective, applying \cite[Corollary 5.17]{nakayama}, we see that $K_{\tilde{X}_{m}}+\tilde{B}_{m}+\tilde{\pi}_{m}^{*}g^{*}A_{Z}+\boldsymbol{\rm M}_{\tilde{X}_{m}}$ birationally has the Nakayama--Zariski decomposition with semi-ample positive part. 

\end{step3}
By the first paragraph of Step \ref{step3genabklt} and the final sentence of Step \ref{step1genabklt}, 
$K_{X}+B+\boldsymbol{\rm M}_{X}+\pi^{*}A_{Z}$ is abundant, and the divisor birationally has the Nakayama--Zariski decomposition with semi-ample positive part when it is pseudo-effective. 
We finish the proof. 
\end{proof}

\begin{thm}\label{thm--abund-sub}
Let $\pi\colon X\to Z$ be a morphism of normal projective varieties, and let $(X,B, \boldsymbol{\rm M})$ be a generalized lc pair. 
Suppose that  
\makeatletter 
\renewcommand{\p@enumii}{II-} 
\makeatother
\begin{enumerate}[(I)]
\item \label{thm--abund-sub-(I)}
$\boldsymbol{\rm M}$ is a finite $\mathbb{R}_{>0}$-linear combination of b-nef $\mathbb{Q}$-b-Cartier $\mathbb{Q}$-b-divisors, and  
\item \label{thm--abund-sub-(II)}
there is an effective $\mathbb{R}$-Cartier divisor $C$ on $X$ such that
\begin{enumerate}[({II-}a)]
\item \label{thm--abund-sub-(II-a)}
the generalized pair $(X,B+tC, \boldsymbol{\rm M})$ is generalized lc for some $t>0$, and  
\item \label{thm--abund-sub-(II-b)}
$K_{X}+B+C+\boldsymbol{\rm M}_{X}\sim_{\mathbb{R},Z}0$.
\end{enumerate}
\end{enumerate}
Let $A_{Z}$ be an ample $\mathbb{R}$-divisor on $Z$, and pick $0\leq A\sim_{\mathbb{R}}\pi^{*}A_{Z}$ such that $(X,B+A, \boldsymbol{\rm M})$ is a generalized lc pair. 

Then, $K_{X}+B+A+\boldsymbol{\rm M}_{X}$ is abundant. 
\end{thm}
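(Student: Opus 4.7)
The plan is to reduce this to Proposition \ref{prop--genabklt} via a relative MMP over $Z$ and the canonical bundle formula for generalized pairs. First, I would use Theorem \ref{thm--dlt-model} and Lemma \ref{lem--logsmooth} (together with Lemma \ref{lem--lift}) to pass to a log resolution $f\colon\tilde{X}\to X$ where $\boldsymbol{\rm M}=\overline{\boldsymbol{\rm M}_{\tilde{X}}}$, the support of $\tilde{B}+f^{*}C+f^{*}A$ has simple normal crossings, and the data $(\tilde{B},f^{*}C,f^{*}A)$ agrees with $(B,C,A)$ up to effective exceptional divisors that do not affect the invariant Iitaka or numerical dimensions. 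This allows us to assume $(X,B,\boldsymbol{\rm M})$ is a $\mathbb{Q}$-factorial log smooth generalized dlt pair with every divisor of interest having SNC support.

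Next, I would run a $(K_{X}+B+\boldsymbol{\rm M}_{X})$-MMP over $Z$ with scaling, following the strategy of Proposition \ref{prop--relmmp}. In that proposition the hypotheses ``$K_{X}+B+\boldsymbol{\rm M}_{X}\sim_{\mathbb{R},Z}0$'' and ``existence of a klt pair $(X,\Delta)$ with $K_{X}+\Delta\sim_{\mathbb{R},Z}0$ and $\Delta$ big over $Z$'' are now replaced by the combination of (II-b) (which gives $K_{X}+B+\boldsymbol{\rm M}_{X}\sim_{\mathbb{R},Z}-C$), (II-a) (which provides LCT room to add a small multiple of $C$ to the boundary while preserving generalized lc-ness), and the polarization $A\sim_{\mathbb{R}}\pi^{*}A_{Z}$ (which supplies the relative bigness required to appeal to \cite{bchm}). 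The MMP should terminate with a model $\phi\colon X\dashrightarrow X^{m}$ over $Z$ on which $K_{X^{m}}+B^{m}+\boldsymbol{\rm M}_{X^{m}}$ is semi-ample over $Z$, inducing a contraction $h\colon X^{m}\to W$ over $Z$.

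On $X^{m}$ we then have $K_{X^{m}}+B^{m}+\boldsymbol{\rm M}_{X^{m}}\sim_{\mathbb{R},W}0$, so the canonical bundle formula for generalized pairs (see \cite{filcanbundleformula}) produces a generalized lc pair $(W,B_{W},\boldsymbol{\rm N})$ satisfying hypothesis (I) with $K_{X^{m}}+B^{m}+\boldsymbol{\rm M}_{X^{m}}\sim_{\mathbb{R}}h^{*}(K_{W}+B_{W}+\boldsymbol{\rm N}_{W})$. Since $A^{m}=\phi_{*}A$ is still $\mathbb{R}$-linearly equivalent to $h^{*}$ of the pullback of $A_{Z}$ along $W\to Z$, Proposition \ref{prop--genabklt} (or a generalized lc analogue obtained by perturbing to the generalized klt case via a small part of the pullback of $A_{Z}$ and inducting on dimension using Lemma \ref{lem--adjunction}) yields abundance of $K_{W}+B_{W}+\boldsymbol{\rm N}_{W}+(\text{pullback of }A_{Z})$, and pulling back gives abundance of $K_{X^{m}}+B^{m}+A^{m}+\boldsymbol{\rm M}_{X^{m}}$, hence of $K_{X}+B+A+\boldsymbol{\rm M}_{X}$.

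The hardest step is the middle one: although $K_{X}+B+\boldsymbol{\rm M}_{X}$ is not relatively trivial over $Z$, the effective divisor $C$ together with the LCT condition (II-a) must be used to simulate the hypotheses of Proposition \ref{prop--relmmp} --- in particular, to produce the auxiliary generalized klt polarization controlling the MMP with scaling and to guarantee termination with a relative good minimal model. A secondary but still technical point is applying Proposition \ref{prop--genabklt} when $(W,B_{W},\boldsymbol{\rm N})$ is only generalized lc rather than generalized klt; the natural fix is to pass to a dlt model of $(W,B_{W},\boldsymbol{\rm N})$, separate the generalized klt part from the generalized lc centers, and conclude by induction on dimension.
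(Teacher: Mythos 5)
Your overall instinct---klt case via Proposition \ref{prop--genabklt}, lc case by induction on dimension through adjunction---points in the right direction, but two steps as written do not work, and they are exactly where the content of the theorem lies.

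First, the relative good minimal model over $Z$. You claim the polarization $A\sim_{\mathbb{R}}\pi^{*}A_{Z}$ ``supplies the relative bigness required to appeal to \cite{bchm}.'' It does not: a pullback from $Z$ is $\mathbb{R}$-linearly trivial over $Z$, so it contributes no bigness over $Z$ whatsoever. Proposition \ref{prop--relmmp} needs both that the pushed-forward divisor is relatively trivial \emph{and} that there is a klt pair $(X,\Delta)$ with $\Delta$ big over the base and $K_{X}+\Delta\sim_{\mathbb{R},Z}0$; neither is available over the given $Z$ from hypotheses (II-a), (II-b) alone, and nothing in your sketch produces them. Relatedly, even if a relative good minimal model $X^{m}\to W$ over $Z$ existed, the contraction $h$ could be birational (when $K_{X}+B+\boldsymbol{\rm M}_{X}$ is big over $Z$), in which case the canonical bundle formula returns the same problem in the same dimension and the induction does not close.

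Second, and more fundamentally, the reduction from generalized lc to generalized klt is not a ``secondary technical point'' fixable by passing to a dlt model of $(W,B_{W},\boldsymbol{\rm N})$: it is the main body of the paper's argument. The paper's proof proceeds by a dichotomy. If $K_{X}+B-\epsilon\llcorner B\lrcorner+A+\boldsymbol{\rm M}_{X}$ is pseudo-effective for some $\epsilon>0$, restriction to a general fiber of $\pi$ forces $C$ and $\llcorner B\lrcorner$ to be vertical over $Z$, and Proposition \ref{prop--genabklt} applies directly (together with an interpolation of $\kappa_{\sigma}$ and $\kappa_{\iota}$). Otherwise there is a component $S$ of $\llcorner B\lrcorner$ such that $K_{X}+B-\epsilon S+A+\boldsymbol{\rm M}_{X}$ is never pseudo-effective; running the MMP for such a perturbed divisor yields a Mori fiber space $X'\to Z'$ over a \emph{new} base $Z'$ on which $S'$ is relatively ample, and the ACC for generalized lc thresholds and the global ACC of \cite{bz} are needed to arrange $K_{X'}+B'+A'+\boldsymbol{\rm M}_{X'}\sim_{\mathbb{R},Z'}0$. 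Only then does Proposition \ref{prop--relmmp} apply (over $Z'$, where $S'$ supplies the big klt polarization), and abundance is transferred to the divisorial adjunction on $S$, which satisfies the hypotheses of Theorem \ref{thm--abund-sub} in dimension $n-1$; a discrepancy comparison is then required to invoke Lemma \ref{lem--abund-birat}. None of this machinery---the pseudo-effectivity dichotomy, the auxiliary Mori fiber space and its base $Z'$, the ACC inputs, and the adjunction to a horizontal component of $\llcorner B\lrcorner$---appears in your sketch, so the lc case remains unproved.
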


Before proving the theorem, we prove the following lemma. 

\begin{lem}\label{lem--abund-birat}
Assume Theorem \ref{thm--abund-sub} for all projective generalized lc pairs of dimension at most $n-1$. 
Let $\pi\colon X\to Z$ be a morphism, $(X,B, \boldsymbol{\rm M})$ a projective generalized lc pair, and let $C$, $A_{Z}$ and $A$ be $\mathbb{R}$-Cartier divisors as in Theorem \ref{thm--abund-sub} such that ${\rm dim}X\leq n- 1$. 
Let $f\colon \tilde{X} \to X$ be a log resolution of $(X,{\rm Supp}B)$ such that $\boldsymbol{\rm M}=\overline{\boldsymbol{\rm M}_{\tilde{X}}}$. 
Let $\tilde{B}\geq0$ be an $\mathbb{R}$-divisor on $\tilde{X}$ such that the pair $(\tilde{X},\tilde{B}+f^{*}A)$ is an lc pair and the effective part of the divisor $K_{\tilde{X}}+\tilde{B}+\boldsymbol{\rm M}_{\tilde{X}}-f^{*}(K_{X}+B+\boldsymbol{\rm M}_{X})$ is $f$-exceptional. 

Then, $K_{\tilde{X}}+\tilde{B}+f^{*}A+\boldsymbol{\rm M}_{\tilde{X}}$ is abundant. 
\end{lem}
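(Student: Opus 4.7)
The plan is to invoke the inductive hypothesis, Theorem \ref{thm--abund-sub} in dimension $\le n-1$, applied directly to $(X,B,\boldsymbol{\rm M})$ together with the divisors $C$, $A_Z$, $A$ supplied by the hypotheses. This immediately yields that $D := K_X + B + A + \boldsymbol{\rm M}_X$ is abundant on $X$. The remaining task is to transfer this abundance across the log resolution $f$ to the divisor $\tilde D := K_{\tilde X} + \tilde B + f^*A + \boldsymbol{\rm M}_{\tilde X}$.

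Decompose $\tilde D = f^*D + \tilde E - \tilde F$ where $\tilde E,\tilde F \ge 0$ have no common components; by hypothesis $\tilde E$ is $f$-exceptional, while $\tilde F$ may contain non-exceptional components, corresponding to places where $\tilde B$ has a smaller coefficient than $B$ on a strict transform (pushforward gives $f_*\tilde F = B - f_*\tilde B \ge 0$). Since $\tilde E$ is effective and $f$-exceptional, the divisor $\tilde D + \tilde F = f^*D + \tilde E$ has the same invariant Iitaka and numerical dimensions as $D$, by the standard birational invariance of $\kappa_\iota$ and $\kappa_\sigma$ under pullback and under the addition of effective $f$-exceptional divisors. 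So $\tilde D + \tilde F$ is abundant, and the problem reduces to showing that $\tilde D$ itself is abundant in this specific birational situation.

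I would handle two cases. If $\tilde D$ is not pseudo-effective, then $\kappa_\sigma(\tilde X,\tilde D) = -\infty$, and the universal inequality $\kappa_\iota \le \kappa_\sigma$ recorded in \cite[Remark 2.8]{hashizumehu} forces $\kappa_\iota = \kappa_\sigma = -\infty$, so $\tilde D$ is vacuously abundant. In the pseudo-effective case, I would pass to a $\mathbb{Q}$-factorial dlt model of $(\tilde X, \tilde B + f^*A, \boldsymbol{\rm M})$ via Theorem \ref{thm--dlt-model} and run a $(K_{\tilde X} + \tilde B + f^*A + \boldsymbol{\rm M}_{\tilde X})$-MMP over $X$ with scaling of an ample divisor. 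Over $X$ this MMP is for $\tilde E - \tilde F$: by \cite[Proposition 3.8]{hanli} the $f$-exceptional effective part $\tilde E$ is contracted in finitely many steps, and the pseudo-effectivity of $\tilde D$ combined with the negativity lemma then forces the remaining negative contribution from $\tilde F$ to be contracted as well. On the resulting model, the divisor becomes crepant to $f^*D$ modulo effective $f$-exceptional pieces, so abundance descends from $D$ back to $\tilde D$.

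The main obstacle will be the execution of the MMP step: $(\tilde X, \tilde B + f^*A)$ is only assumed to be lc, and the support of $f^*A$ may not be adapted to the chosen log resolution, so one must first refine the setup (for instance by replacing $f$ with a log resolution of $(X, \Supp(B+A))$, a modification that does not affect the conclusion) and then carefully apply the generalized MMP machinery of \cite{hanli} in conjunction with the pushforward identity $f_*\tilde F = B - f_*\tilde B \ge 0$ to force the simultaneous contraction of $\tilde E$ and the relevant components of $\tilde F$.
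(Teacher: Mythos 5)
Your opening move (applying Theorem \ref{thm--abund-sub} to $(X,B,\boldsymbol{\rm M})$ itself to get abundance of $D=K_X+B+A+\boldsymbol{\rm M}_X$) is legitimate but turns out to be a dead end, and the step you use to bridge from $D$ to $\tilde D$ contains the real gap. After running the $(K_{\tilde X}+\tilde B+f^*A+\boldsymbol{\rm M}_{\tilde X})$-MMP over $X$, \cite[Proposition 3.8]{hanli} contracts the effective exceptional part $\tilde E$, but there is no mechanism that contracts the negative part $\tilde F$: MMP steps are negative for the divisor being run, and the negativity lemma yields precisely that on the end model $\tilde X'\to X$ one has $K_{\tilde X'}+\tilde B'+\tilde A'+\boldsymbol{\rm M}_{\tilde X'}=f'^*(K_X+B+A+\boldsymbol{\rm M}_X)-\tilde E'_-$ with $\tilde E'_-$ (the pushforward of $\tilde F$) surviving. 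If $\tilde F$ has non-exceptional components (which you explicitly allow) it certainly cannot be contracted over $X$, and even when it is exceptional the relative MMP leaves it alone. Consequently the divisor on $\tilde X'$ is \emph{not} crepant to $f'^*D$ modulo effective exceptional pieces that are \emph{added}; an effective divisor is \emph{subtracted}, and neither $\kappa_\iota$ nor $\kappa_\sigma$ is preserved under subtracting an effective (even exceptional) divisor from a pullback — both can drop, to $-\infty$ in the extreme. So abundance of $D$ does not descend to $\tilde D$ by this route.

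The paper's proof sidesteps this entirely: rather than comparing with $D$ on $X$, it re-applies the induction hypothesis to the \emph{new} generalized pair produced by the MMP. On $\tilde X'$ one has
$$K_{\tilde X'}+\tilde B'+\bigl(f'^*C+\tilde E'_-\bigr)+\boldsymbol{\rm M}_{\tilde X'}=f'^*(K_X+B+C+\boldsymbol{\rm M}_X)\sim_{\mathbb{R},Z}0,$$
so the leftover $\tilde E'_-$ is absorbed into the auxiliary divisor: $C':=f'^*C+\tilde E'_-$ satisfies (\ref{thm--abund-sub-(II-b)}), and it satisfies (\ref{thm--abund-sub-(II-a)}) with $t'=\min\{1,t\}$ because $(X,B+tC,\boldsymbol{\rm M})$ is generalized lc and $\tilde E'_-$ sits inside the relevant crepant discrepancy divisor. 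Theorem \ref{thm--abund-sub} in dimension $\le n-1$ applied to $\pi\circ f'\colon\tilde X'\to Z$, $(\tilde X',\tilde B',\boldsymbol{\rm M})$, $C'$, $A_Z$, $\tilde A'$ then gives abundance of $K_{\tilde X'}+\tilde B'+\tilde A'+\boldsymbol{\rm M}_{\tilde X'}$ directly, and abundance of $K_{\tilde X}+\tilde B+f^*A+\boldsymbol{\rm M}_{\tilde X}$ follows since the two are connected by a sequence of steps of an MMP for that divisor. To repair your argument you would need to replace the claim that $\tilde F$ gets contracted with this absorption of $\tilde E'_-$ into the hypothesis (II) and a fresh application of the inductive hypothesis on $\tilde X'$.
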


\begin{proof}
We set $\tilde{A}=f^{*}A$. 
We can write
$$K_{\tilde{X}}+\tilde{B}+\boldsymbol{\rm M}_{\tilde{X}}-f^{*}(K_{X}+B+\boldsymbol{\rm M}_{X})+\tilde{E}_{+}-\tilde{E}_{-}$$
where $\tilde{E}_{+}\geq0$ and $\tilde{E}_{-}\geq0$ are $f$-exceptional $\mathbb{R}$-divisors having no common components. 
We run a $(K_{\tilde{X}}+\tilde{B}+\tilde{A}+\boldsymbol{\rm M}_{\tilde{X}})$-MMP over $X$ with scaling of an ample divisor. 
By applying \cite[Proposition 3.8]{hanli} to $(\tilde{X},\tilde{B}+\tilde{A},\boldsymbol{\rm M})/X$, after finitely many steps we get a projective morphism 
$f'\colon \tilde{X}' \to X$ such that 
$$K_{\tilde{X}'}+\tilde{B}'+\tilde{A}'+\boldsymbol{\rm M}_{\tilde{X}'}=f'^{*}(K_{X}+B+\boldsymbol{\rm M}_{X}+A)-\tilde{E}'_{-},$$
where $\tilde{B}'$, $\tilde{A}'$, and $\tilde{E}'_{-}$ are the birational transforms of $\tilde{B}$, $\tilde{A}$, and $\tilde{E}_{-}$ on $\tilde{X}'$, respectively. 
Then $\tilde{A}'=f'^{*}A$, and we have
$$K_{\tilde{X}'}+\tilde{B}'+(u f'^{*}C+\tilde{E}'_{-})+\boldsymbol{\rm M}_{\tilde{X}'}=f'^{*}(K_{X}+B+u C+\boldsymbol{\rm M}_{X})$$ 
for all real numbers $u\geq 0$. 
By (\ref{thm--abund-sub-(II-a)}) in Theorem \ref{thm--abund-sub}, we see that the generalized pair $\bigl(\tilde{X'}, \tilde{B}'+t f'^{*}C+\tilde{E}'_{-}, \boldsymbol{\rm M}\bigr)$
is generalized lc for some $t>0$. 
Putting $t'={\rm min}\{1,t\}$, we see that $\bigl(\tilde{X'}, \tilde{B}'+t'(f'^{*}C+\tilde{E}'_{-}), \boldsymbol{\rm M}\bigr)$ is a generalized lc pair. 
Furthermore, by (\ref{thm--abund-sub-(II-b)}) in Theorem \ref{thm--abund-sub}, we obtain the relation
$$K_{\tilde{X}'}+\tilde{B}'+(f'^{*}C+\tilde{E}'_{-})+\boldsymbol{\rm M}_{\tilde{X}'}=f'^{*}(K_{X}+B+C+\boldsymbol{\rm M}_{X})\sim_{\mathbb{R},Z}0.$$ 
By construction, it is obvious that $\bigl(\tilde{X}',\tilde{B}'+\tilde{A}',\boldsymbol{\rm M}\bigr)$ is a generalized lc pair. 
From these arguments, we can apply Theorem \ref{thm--abund-sub} to $\pi\circ f'\colon \tilde{X}\to Z$, $\bigl(\tilde{X'}, \tilde{B}', \boldsymbol{\rm M}\bigr)$, $f'^{*}C+\tilde{E}'_{-}$, $A_{Z}$ and $\tilde{A}'$. 
We see that $K_{\tilde{X}'}+\tilde{B}'+\tilde{A}'+\boldsymbol{\rm M}_{\tilde{X}'}$ is abundant. 
Since $\tilde{X}'$ is constructed by running a $(K_{\tilde{X}}+\tilde{B}+\tilde{A}+\boldsymbol{\rm M}_{\tilde{X}})$-MMP, the divisor $K_{\tilde{X}}+\tilde{B}+\tilde{A}+\boldsymbol{\rm M}_{\tilde{X}}$ is also abundant. 
\end{proof}

From now on, we prove Theorem \ref{thm--abund-sub}. 

\begin{proof}[Proof of Theorem \ref{thm--abund-sub}]
The argument is very similar to \cite[Proof of Theorem 5.4]{hashizumehu}. 
We prove Theorem \ref{thm--abund-sub} by induction on the dimension of $X$. 
Assume Theorem \ref{thm--abund-sub} for all projective generalized lc pairs of dimension $\leq n-1$. 
Let $\pi\colon X\to Z$ be a projective morphism, $(X,B,\boldsymbol{\rm M})$ a projective generalized lc pair, and let $C$, $A_{Z}$ and $A$ be $\mathbb{R}$-Cartier divisors as in Theorem \ref{thm--abund-sub} such that ${\rm dim}X= n$. 
We may assume that $K_{X}+B+A+\boldsymbol{\rm M}_{X}$ is pseudo-effective. 
By replacing $A$ with a general one, we may assume that $B$ and $A$ have no common components and all generalized lc centers of $(X,B+A,\boldsymbol{\rm M})$ are those of $(X,B,\boldsymbol{\rm M})$. 
By replacing $(X,B,\boldsymbol{\rm M})$ with a generalized $\mathbb{Q}$-factorial dlt model and replacing $C$ and $A$ with the pullbacks, we may assume that $(X,B,\boldsymbol{\rm M})$ is a generalized $\mathbb{Q}$-factorial dlt pair. 
Note that the conditions (\ref{thm--abund-sub-(I)}), (\ref{thm--abund-sub-(II-a)}), and (\ref{thm--abund-sub-(II-b)}) still hold after replacing $(X,B,\boldsymbol{\rm M})$, $C$, and $A$. 

From now on, we divide the proof into several steps. 
Each step corresponds to that of \cite[Proof of Theorem 5.4]{hashizumehu}. 

\begin{step4}\label{step1abund}
In this step, we prove Theorem \ref{thm--abund-sub} in the case where $K_{X}+B-\epsilon\llcorner B\lrcorner +A+\boldsymbol{\rm M}_{X}$ is pseudo-effective for some real number $\epsilon>0$. 
Note that this case includes the generalized klt case. 

Since we have $K_{X}+B+C+\boldsymbol{\rm M}_{X}\sim_{\mathbb{R},Z}0$, by restricting $K_{X}+B-\epsilon\llcorner B\lrcorner +A+\boldsymbol{\rm M}_{X}$ to a  general fiber of the Stein factorization of $\pi$, we see that $\llcorner B\lrcorner$ and $C$ are vertical over $Z$. 
Then there is an open subset $U\subset Z$ such that 
$(K_{X}+B-\epsilon\llcorner B\lrcorner +\boldsymbol{\rm M}_{X})|_{\pi^{-1}(U)}\sim_{\mathbb{R},U}0$. 
By Proposition \ref{prop--genabklt}, $K_{X}+B-\epsilon\llcorner B\lrcorner +A+\boldsymbol{\rm M}_{X}$ is abundant, so $K_{X}+B-\epsilon\llcorner B\lrcorner +A+\boldsymbol{\rm M}_{X}\sim_{\mathbb{R}}G$ for an $\mathbb{R}$-divisor $G\geq0$. 
Similarly, we see that $K_{X}+B-\tfrac{\epsilon}{2}\llcorner B\lrcorner +A+\boldsymbol{\rm M}_{X}$ is abundant. 
Now we have
\begin{equation*}
\begin{split}
K_{X}+B-\tfrac{\epsilon}{2}\llcorner B\lrcorner +A+\boldsymbol{\rm M}_{X}\sim_{\mathbb{R}}G+\tfrac{\epsilon}{2}\llcorner B\lrcorner \quad {\rm and}\quad K_{X}+B+A+\boldsymbol{\rm M}_{X}\sim_{\mathbb{R}}G+\llcorner B\lrcorner. 
\end{split}
\end{equation*}
By \cite[Remark 2.15]{hashizumehu}, we have
\begin{equation*}
\begin{split}
\kappa_{\sigma}(X,K_{X}+B+A+\boldsymbol{\rm M}_{X})&=\kappa_{\sigma}(X,K_{X}+B-\tfrac{\epsilon}{2}\llcorner B\lrcorner +A+\boldsymbol{\rm M}_{X})\\
&=\kappa_{\iota}(X,K_{X}+B-\tfrac{\epsilon}{2}\llcorner B\lrcorner +A+\boldsymbol{\rm M}_{X})\\
&=\kappa_{\iota}(X,K_{X}+B+A+\boldsymbol{\rm M}_{X}).
\end{split}
\end{equation*}
In this way, we see that $K_{X}+B+A+\boldsymbol{\rm M}_{X}$ is abundant. 
\end{step4}

\begin{step4}\label{step2abund}
By Step \ref{step1abund}, we may assume that the divisor $K_{X}+B-\epsilon\llcorner B\lrcorner +A+\boldsymbol{\rm M}_{X}$ is not pseudo-effective for any real number $\epsilon>0$. 
Then there exists a component $S$ of $\llcorner B\lrcorner$ such that $K_{X}+B-\epsilon S+A+\boldsymbol{\rm M}_{X}$ is not pseudo-effective for any real number $\epsilon>0$. 
In this step, we will apply \cite[Proof of Lemma 4.4]{hanliu-nonvanish} (see also \cite[Lemma 2.9]{birkar-connectedness}). 

For any $\epsilon'\in(0,1]$, by running a $(K_{X}+B-\epsilon'S +A+\boldsymbol{\rm M}_{X})$-MMP with scaling of an ample divisor, we obtain a birational contraction $X\dashrightarrow X'$ and a projective generalized lc pair $(X',B'-\epsilon'S'+A',\boldsymbol{\rm M})$ admitting a structure of Mori fiber space $X'\to Z'$, that is, ${\rm dim}Z'<{\rm dim}X'$, $-(K_{X'}+B'-\epsilon'S'+A'+\boldsymbol{\rm M}_{X'})$ is ample over $Z'$, and the relative Picard number is $1$. 
With the ACC for generalized lc thresholds (\cite[Theorem 1.5]{bz}), we can find $\epsilon_{0}\in(0,1]$ such that if $\epsilon'\in(0,\epsilon_{0})$, then $(X',B'+A',\boldsymbol{\rm M})$ is generalized lc. 

By (\ref{thm--abund-sub-(I)}) of Theorem \ref{thm--abund-sub}, we may write $\boldsymbol{\rm M}=\sum_{i=1}^{l}r_{l}\boldsymbol{\rm M}^{(i)}$ where $r_{i}$ are positive real numbers and $\boldsymbol{\rm M}^{(i)}$ are b-nef $\mathbb{Q}$-b-Cartier $\mathbb{Q}$-b-divisors. 
Since $X'$ is $\mathbb{Q}$-factorial, $\boldsymbol{\rm M}^{(i)}_{X'}$ are $\mathbb{Q}$-Cartier divisors on $X'$. 
By construction, we may find a positive real number $t_{\epsilon'}<\epsilon'$ such that $K_{X'}+B'-t_{\epsilon'}S'+A'+\boldsymbol{\rm M}_{X'}$ is numerically trivial over $Z'$. 
We pick a general fiber $F'$ of $X'\to Z'$. 
Then $K_{F'}+(B'-t_{\epsilon'}S'+A'+\boldsymbol{\rm M}_{X'})|_{F'}\equiv0$ and $\boldsymbol{\rm M}|_{F'}=\sum_{i=1}^{l}r_{l}\boldsymbol{\rm M}^{(i)}|_{F'}$. 
Here, $\boldsymbol{\rm M}^{(i)}|_{F'}$ are b-nef $\mathbb{Q}$-b-Cartier $\mathbb{Q}$-b-divisors on $F'$ and the trace of each $\boldsymbol{\rm M}^{(i)}|_{F'}$ on $F'$ is a $\mathbb{Q}$-Cartier divisor. 
By the negativity lemma, for each $i$, if $\boldsymbol{\rm M}^{(i)}|_{F'}$ is the closure of a numerically trivial $\mathbb{Q}$-Cartier divisor then the trace of $\boldsymbol{\rm M}^{(i)}|_{F'}$ on $F'$ is numerically trivial and $\boldsymbol{\rm M}^{(i)}|_{F'}$ descends to $F'$. 
After discarding $\boldsymbol{\rm M}^{(i)}|_{F'}$ which are the closures of  numerically trivial divisors (such $\boldsymbol{\rm M}^{(i)}|_{F'}$ depend on $\epsilon'$), we can apply the global ACC \cite[Theorem 1.6]{bz} to the set $\bigl\{\bigl(F', (B'-t_{\epsilon'}S'+A')|_{F'}, \boldsymbol{\rm M}_{X'}|_{F'}\bigr)\,\big|\, \epsilon'\in (0,1]\bigr\}$. 
We can find $\epsilon'\in(0,\epsilon_{0})$ such that $X\dashrightarrow X'$ and $X'\to Z'$ as above satisfy that $(X',B'+A',\boldsymbol{\rm M})$ is generalized lc and $K_{X'}+B'+A'+\boldsymbol{\rm M}_{X'}\sim_{\mathbb{R},Z'}0$. 

By the above discussions, by choosing $\epsilon'\in(0,1]$ sufficiently small, we get a birational contraction $X\dashrightarrow X'$ and a contraction $X'\to Z'$ such that $(X',0)$ is $\mathbb{Q}$-factorial klt, $(X',B'+A',\boldsymbol{\rm M})$ is generalized lc, and $K_{X'}+B'+A'+\boldsymbol{\rm M}_{X'}\sim_{\mathbb{R},Z'}0$. 
\end{step4}

\begin{step4}\label{step3abund}
The goal of this step is to construct a diagram 
$$
\xymatrix
{
\tilde{X}\ar[d]_{f}\ar@{-->}[r]&\tilde{X}'\ar[d]\\
X&\tilde{Z}
}
$$
having good properties, where $f$ is a log resolution of $(X,{\rm Supp}B)$ such that $\boldsymbol{\rm M}=\overline{\boldsymbol{\rm M}_{\tilde{X}}}$, the map $\tilde{X}\dashrightarrow \tilde{X}'$ is a sequence of steps of an MMP over $Z'$ of a divisor $K_{\tilde{X}}+\tilde{B}+\tilde{A}+\boldsymbol{\rm M}_{\tilde{X}}$ such that the property of being abundant for $K_{X}+B+A+\boldsymbol{\rm M}_{X}$ follows from that for $K_{\tilde{X}}+\tilde{B}+\tilde{A}+\boldsymbol{\rm M}_{\tilde{X}}$, and $\tilde{X}'\to \tilde{Z}$ is a contraction over $Z'$ induced by the birational transform of $K_{\tilde{X}}+\tilde{B}+\tilde{A}+\boldsymbol{\rm M}_{\tilde{X}}$ on $\tilde{X}'$. 

Let $\overline{f}\colon \overline{X} \to X$ and $\overline{f}'\colon \overline{X}\to X'$ be a common resolution of $X \dashrightarrow X'$ such that $\overline{f}$ and $\overline{f}'$ are log resolutions of $(X,{\rm Supp}B)$ and $(X',{\rm Supp}B')$ respectively and $\boldsymbol{\rm M}=\overline{\boldsymbol{\rm M}_{\overline{X}}}$. 
Since $A$ is general, we may replace $A$ by a general member of its $\mathbb{R}$-linear system without changing the coefficients. 
The ACC for generalized lc thresholds guarantees that the property of $(X',B'+A',\boldsymbol{\rm M})$ being generalized lc is preserved after we replace $A$. 
Thus, we can replace $A$ with a general one keeping that $(X',B'+A',\boldsymbol{\rm M})$ is generalized lc and $K_{X'}+B'+A'+\boldsymbol{\rm M}_{X'}\sim_{\mathbb{R},Z'}0$. 
By \cite[Lemma 2.2]{hashizumehu} and replacing $A$, we may assume that $\overline{f}$ (resp.~$\overline{f}'$) is a log resolution of $(X,{\rm Supp}(B+A))$ (resp.~$(X',{\rm Supp}(B'+A'))$) and that ${\rm Supp}\overline{f}^{*}A$ and ${\rm Supp}\overline{f}_{*}^{-1}B\cup {\rm Ex}(\overline{f})\cup {\rm Ex}(\overline{f}')$ have no common divisorial components. 
Then $\overline{f}^{*}A=\overline{f}_{*}^{-1}A$. 
Putting $\overline{A}=\overline{f}^{*}A$, we can write 
\begin{equation*}\tag{1}\label{proof-thm--abund-sub-(1)}
K_{\overline{X}}+\overline{B}+\overline{A}+\boldsymbol{\rm M}_{\overline{X}}=\overline{f}^{*}(K_{X}+B+A+\boldsymbol{\rm M}_{X})+\overline{E}
\end{equation*}
with $\overline{B}\geq0$ and $\overline{E}\geq0$ which have no common components. 
Then $\overline{B}+\overline{A}$ and $\overline{E}$ have no common components and $(\overline{X},\overline{B}+\overline{A},\boldsymbol{\rm M})$ is a $\mathbb{Q}$-factorial generalized dlt pair with the nef part $\boldsymbol{\rm M}=\overline{\boldsymbol{\rm M}_{\overline{X}}}$. 

We check that the generalized lc pair $(X',B'+A',\boldsymbol{\rm M})/Z$, the morphisms $\overline{f}'\colon \overline{X}\to X'$ and  $X'\to Z'$, and the generalized lc pair $(\overline{X},\overline{B}+\overline{A},\boldsymbol{\rm M})$ satisfy all the conditions of Proposition \ref{prop--relmmp}. 
Note that we may regard $(X',B'+A',\boldsymbol{\rm M})$ as a generalized lc pair over $Z$ because $\boldsymbol{\rm M}$ is b-nef. 
By construction, $\overline{f}'$ is a log resolution of $(X',{\rm Supp}(B'+A'))$ and $X'\to Z'$ is a contraction. 
Since $K_{X}+B+A+\boldsymbol{\rm M}_{X}$ is pseudo-effective, $K_{\overline{X}}+\overline{B}+\overline{A}+\boldsymbol{\rm M}_{\overline{X}}$ is also pseudo-effective, which is the first condition of Proposition \ref{prop--relmmp}. 
The relation $K_{X'}+B'+A'+\boldsymbol{\rm M}_{X'}\sim_{\mathbb{R},Z'}0$ is nothing but the second condition of Proposition \ref{prop--relmmp}. 
By construction of $X\dashrightarrow X'$ and $X'\to Z$, the divisor $S'$ is ample over $Z'$ and $(X',0)$ is a klt pair. 
Since $X'$ is $\mathbb{Q}$-factorial, there is $\Delta'$ on $X'$ such that $(X',\Delta')$ is klt and $K_{X'}+\Delta'\sim_{\mathbb{R},Z'}K_{X'}+B'+A'+\boldsymbol{\rm M}_{X'}\sim_{\mathbb{R},Z'}0$. 
Thus, we can apply Proposition \ref{prop--relmmp} to the morphisms $(\overline{X},\overline{B}+\overline{A},\boldsymbol{\rm M})\to X'$ and $X'\to Z'$, amd there is a birational contraction $\overline{X}\dashrightarrow \overline{X}'$ over $Z'$ such that if we set $\overline{B}'$ and $\overline{A}'$ as the birational transforms of $\overline{B}$ and $\overline{A}$ on $\overline{X}'$ respectively, then we have
\begin{enumerate}[(i)]
\item \label{proof-thm--abund-sub-(i)}
$K_{\overline{X}'}+\overline{B}'+\overline{A}'+\boldsymbol{\rm M}_{\overline{X}'}$ is semi-ample over $Z'$, and
\item \label{proof-thm--abund-sub-(ii)}
$a(P,\overline{X},\overline{B}+\overline{A}+\boldsymbol{\rm M}_{\overline{X}})\leq a(P,\overline{X}',\overline{B}'+\overline{A}'+\boldsymbol{\rm M}_{\overline{X}'})$ for any prime divisor $P$ over $\overline{X}$. 
\end{enumerate}
Now we have the following diagram.
$$
\xymatrix@R=15pt
{
\overline{X}\ar[d]_{\overline{f}}\ar@{-->}[r]&\overline{X}'\ar[d]\\
X\ar[d]_{\pi}\ar@{-->}[r]&X'\ar[d]\\
Z&Z'
}
$$
We take a log resolution $\tilde{f}\colon\tilde{X}\to \overline{X}$ of $(\overline{X},{\rm Supp}\overline{B})$ such that $\boldsymbol{\rm M}=\overline{\boldsymbol{\rm M}_{\tilde{X}}}$ and the induced birational map $\tilde{X}\dashrightarrow \overline{X}'$ is a morphism. 
We set $\tilde{A}=\tilde{f}^{*}\overline{A}$. 
We can write 
\begin{equation*}\tag{2}\label{proof-thm--abund-sub-(2)}
K_{\tilde{X}}+\tilde{B}+\tilde{A}=\tilde{f}^{*}(K_{\overline{X}}+\overline{B}+\overline{A})+\tilde{E}
\end{equation*}
with $\tilde{B}\geq0$ and $\tilde{E}\geq 0$ having no common components. 
Replacing $A$, we may assume $\tilde{A}=\tilde{f}_{*}^{-1}\overline{A}$ and that $(\tilde{X},\tilde{B}+\tilde{A})$ is a log smooth lc pair. 
We may apply Lemma \ref{lem--lift} to $\bigl(\overline{X},\overline{B}+\overline{A},\boldsymbol{\rm M}\bigr)$, $\overline{X}\dashrightarrow \overline{X}'$, $\tilde{f}\colon\tilde{X}\to \overline{X}$, and $\bigl(\tilde{X},\tilde{B}+\tilde{A},\boldsymbol{\rm M}\bigr)$. 
Indeed, the first condition of Lemma \ref{lem--lift} follows from the above (\ref{proof-thm--abund-sub-(i)}), the second condition of Lemma \ref{lem--lift} is nothing but  (\ref{proof-thm--abund-sub-(ii)}) stated above, and the third condition of Lemma \ref{lem--lift} follows from the relation (\ref{proof-thm--abund-sub-(2)}). 
By applying Lemma \ref{lem--lift}, we obtain a birational contraction $\tilde{X}\dashrightarrow \tilde{X}'$ over $\overline{X}'$ which is a sequence of steps of a $(K_{\tilde{X}}+\tilde{B}+\tilde{A}+\boldsymbol{\rm M}_{\tilde{X}})$-MMP and a projective $\mathbb{Q}$-factorial generalized dlt pair $\bigl(\tilde{X}',\tilde{B}'+\tilde{A}',\boldsymbol{\rm M}\bigr)$ such that $K_{\tilde{X}'}+\tilde{B}'+\tilde{A}'+\boldsymbol{\rm M}_{\tilde{X}'}$ is equal to the pullback of $K_{\overline{X}'}+\overline{B}'+\overline{A}'+\boldsymbol{\rm M}_{\overline{X}'}$. 
By the property (\ref{proof-thm--abund-sub-(i)}), the divisor $K_{\tilde{X}'}+\tilde{B}'+\tilde{A}'+\boldsymbol{\rm M}_{\tilde{X}'}$ is semi-ample over $Z'$. 
Let $\tilde{X}'\to \tilde{Z}$ be a contraction over $Z'$ induced by $K_{\tilde{X}'}+\tilde{B}'+\tilde{A}'+\boldsymbol{\rm M}_{\tilde{X}'}$. 
It follows that the induced morphism $\tilde{Z}\to Z'$ is birational because the birational transform of $K_{\tilde{X}'}+\tilde{B}'+\tilde{A}'+\boldsymbol{\rm M}_{\tilde{X}'}$ on $X'$ is $K_{X'}+B'+A'+\boldsymbol{\rm M}_{X'}\sim_{\mathbb{R},Z}0$ which shows that the restriction of $K_{\tilde{X}'}+\tilde{B}'+\tilde{A}'+\boldsymbol{\rm M}_{\tilde{X}'}$ to a general fiber of $\tilde{X}'\to Z'$ is numerically trivial.
\end{step4}

\begin{step4}\label{step4abund}
We put $f=\overline{f}\circ\tilde{f}\colon \tilde{X}\to X$. 
We have constructed the following diagram
$$
\xymatrix@R=15pt{
\tilde{X}\ar[d]_{f}\ar@{-->}[r]&\tilde{X}'\ar[d]\\
X\ar[d]_{\pi}&\tilde{Z}\\
Z
}
$$
and projective $\mathbb{Q}$-factorial generalized dlt pairs $\bigl(\tilde{X},\tilde{B}+\tilde{A},\boldsymbol{\rm M}\bigr)$ and $\bigl(\tilde{X}',\tilde{B}'+\tilde{A}',\boldsymbol{\rm M}\bigr)$ 
such that $(\tilde{X},\tilde{B}+\tilde{A})$ is a log smooth lc pair and the relation $K_{\tilde{X}'}+\tilde{B}'+\tilde{A}'+\boldsymbol{\rm M}_{\tilde{X}'}\sim_{\mathbb{R},\tilde{Z}}0$ holds. 
We have also proved that the property of being abundant for $K_{X}+B+A+\boldsymbol{\rm M}_{X}$ is reduced to that for $K_{\tilde{X}'}+\tilde{B}'+\tilde{A}'+\boldsymbol{\rm M}_{\tilde{X}'}$. 
In this step, using divisorial adjunction for generalized pairs, we will construct new generalized lc pairs whose dimensions are ${\rm dim}X-1$.  

By the relations (\ref{proof-thm--abund-sub-(1)}) and (\ref{proof-thm--abund-sub-(2)}) in Step \ref{step3abund} and Lemma \ref{lem--logsmooth}, we can write 
\begin{equation*}\tag{3}\label{proof-thm--abund-sub-(3)}
K_{\tilde{X}}+\tilde{B}+\tilde{A}+\boldsymbol{\rm M}_{\tilde{X}}=f^{*}(K_{X}+B+A+\boldsymbol{\rm M}_{X})+\tilde{f}^{*}\overline{E}+\tilde{E}
\end{equation*}
and $\tilde{B}+\tilde{A}$ and $\tilde{f}^{*}\overline{E}+\tilde{E}$ have no common components. 
By (\ref{proof-thm--abund-sub-(3)}) and that $\tilde{X}\dashrightarrow \tilde{X}'$ is a sequence of steps of a $(K_{\tilde{X}}+\tilde{B}+\tilde{A}+\boldsymbol{\rm M}_{\tilde{X}})$-MMP, to prove Theorem \ref{thm--abund-sub}, it is sufficient to prove that $K_{\tilde{X}'}+\tilde{B}'+\tilde{A}'+\boldsymbol{\rm M}_{\tilde{X}}$ is abundant. 
We recall that $S$ is a component of $\llcorner B\lrcorner$ and the birational transform $S'$ on $X'$ is ample over $Z'$. 
Let $\tilde{S}$ be the birational transform of $S$ on $\tilde{X}$. 
Then $\tilde{S}$ is not contracted by the map $\tilde{X}\dashrightarrow \tilde{X}'$ because $K_{\tilde{X}}+\tilde{B}-\epsilon\tilde{S}+\tilde{A}+\boldsymbol{\rm M}_{\tilde{X}}$ is not pseudo-effective for any $\epsilon>0$ (this follows from the second sentence in Step \ref{step2abund}). 
Furthermore, the bitarional transform $\tilde{S}'$ of $S$ on $\tilde{X'}$ dominates $\tilde{Z}$ because $S'$ is ample over $Z'$ and $\tilde{Z}\to Z'$ is birational.  
Then the natural morphism $\tilde{S}'\to \tilde{Z}$ is surjective and $K_{\tilde{X}'}+\tilde{B}'+\tilde{A}'+\boldsymbol{\rm M}_{\tilde{X}'}\sim_{\mathbb{R},\tilde{Z}}0$. 
Thus, it is sufficient to prove that $(K_{\tilde{X}'}+\tilde{B}'+\tilde{A}'+\boldsymbol{\rm M}_{\tilde{X}'})|_{\tilde{S}'}$ is abundant (\cite[Remark 2..8 (2)]{hashizumehu}). 

By taking a suitable resolution of the normalization of the graph of $\tilde{X}\dashrightarrow \tilde{X}'$, we can find a common resolution $Y \to \tilde{X}$ and $Y\to \tilde{X}'$ and a subvariety $T\subset Y$ which is birational to $\tilde{S}$ and $\tilde{S}'$ such that the induced morphisms $T\to\tilde{S}$ and $T\to \tilde{S}'$ form a common resolution of $\tilde{S} \dashrightarrow \tilde{S}'$. 
Since $T$ has codimension one in $Y$, replacing $Y$ by a higher resolution if necessary, we may assume $\boldsymbol{\rm M}=\overline{\boldsymbol{\rm M}_{Y}}$.  
Pick $M_{T}\sim_{\mathbb{R}}\boldsymbol{\rm M}_{Y}|_{T}$ which is an $\mathbb{R}_{>0}$-linear combination of nef Cartier divisors on $T$, and denote the b-divisor $\overline{M_{T}}$ by $\boldsymbol{\rm N}$. 
With this $\boldsymbol{\rm N}$, by applying divisorial adjunction to $(X,B,\boldsymbol{\rm M})$, $\bigl(\tilde{X},\tilde{B},\boldsymbol{\rm M}\bigr)$, and $\bigl(\tilde{X}',\tilde{B}',\boldsymbol{\rm M}\bigr)$, we may construct generalized pairs
\begin{equation*}
\begin{split}
(S,B_{S},\boldsymbol{\rm N}),\quad 
(\tilde{S},B_{\tilde{S}},\boldsymbol{\rm N}), \quad {\rm and}\quad  
(\tilde{S}',B_{\tilde{S}'},\boldsymbol{\rm N}).
\end{split}
\end{equation*}
Here, we regard $\boldsymbol{\rm N}=\overline{M_{T}}$ as b-divisors on $S$, $\tilde{S}$, and $\tilde{S}'$. 
We take a common resolution $\tau\colon \tilde{T}\to \tilde{S}$ and $\tau'\colon \tilde{T}\to \tilde{S}'$ of the birational map $\tilde{S}\dashrightarrow \tilde{S}'$ such that $\tau$ and $\tau'$ are log resolutions of $(\tilde{S}, {\rm Supp}B_{\tilde{S}})$ and $(\tilde{S}', {\rm Supp}B_{\tilde{S}'})$ respectively, $\tau'^{-1}_{*}B_{\tilde{S}'}\cup{\rm Ex}(\tau)\cup{\rm Ex}(\tau')$ is a simple normal crossing divisor, and the induced  map $\tilde{T}\dashrightarrow T$ is a morphism. 
We can regarded $\boldsymbol{\rm N}$ as a b-divisor on $\tilde{T}$. 
We set 
$$A_{S}=A|_{S}, \quad A_{\tilde{S}}=\tilde{A}|_{\tilde{S}},\quad {\rm and} \quad A_{\tilde{S}'}=\tilde{A}'|_{\tilde{S}'}.$$
By construction of divisorial adjunction for generalized pairs \cite[Definition 4.7]{bz}, the generalized pair $(S,B_{S}+A_{S},\boldsymbol{\rm N})$ is equal to the generalized pair constructed with the divisorial adjunction for $(X,B+A,\boldsymbol{\rm M})$. 
Therefore, $(S,B_{S}+A_{S},\boldsymbol{\rm N})$ is generalized lc. 
By the same reasons, $(\tilde{S},B_{\tilde{S}}+A_{\tilde{S}},\boldsymbol{\rm N})$ and $(\tilde{S}',B_{\tilde{S}'}+A_{\tilde{S}'},\boldsymbol{\rm N})$ are generalized lc. 
Since 
$$K_{\tilde{S}'}+B_{\tilde{S}'}+A_{\tilde{S}'}+\boldsymbol{\rm N}_{\tilde{S}'}\sim_{\mathbb{R}}(K_{\tilde{X}'}+\tilde{B}'+\tilde{A}'+\boldsymbol{\rm M}_{\tilde{X}'})|_{\tilde{S}'},$$
to prove Theorem \ref{thm--abund-sub}, it is sufficient to prove that 
$K_{\tilde{S}'}+B_{\tilde{S}'}+A_{\tilde{S}'}+\boldsymbol{\rm N}_{\tilde{S}'}$ is abundant. 
Since $A_{\tilde{S}}=f^{*}A|_{\tilde{S}}$, by \cite[Lemma 2.1]{hashizumehu} and \cite[Lemma 2.2]{hashizumehu} and replacing $A$ with a general one, we may assume $A_{\tilde{S}}\geq0$, $A_{\tilde{S}'}\geq0$, $\tau^{*}A_{\tilde{S}}\leq \tau'^{-1}_{*}A_{\tilde{S}'}$, and $\tau'$ is a log resolution of $\bigl(\tilde{S}',{\rm Supp}(B_{\tilde{S}'}+A_{\tilde{S}'})\bigr)$. 
\end{step4}

\begin{step4}\label{step5abund}
Let $\pi_{S}\colon S\to Z$ be the restriction of $\pi\colon X\to Z$ to $S$, and let $f_{\tilde{S}}\colon \tilde{S}\to S$ be the restriction of $f\colon \tilde{X}\to X$ to $\tilde{S}$. 
We have the following diagram
\begin{equation*}
\xymatrix@R=13pt{
&
\tilde{T}\ar[dl]_{\tau}\ar[dr]^{\tau'}&\\
\tilde{S}\ar[d]_{f_{\tilde{S}}}\ar@{-->}[rr]&&\tilde{S}'\ar[d]\\
S\ar[d]_{\pi_{S}}&&\tilde{Z} \\
Z&&
}
\end{equation*}
and generalized lc pairs $(S,B_{S}+A_{S},\boldsymbol{\rm N})$, $(\tilde{S},B_{\tilde{S}}+A_{\tilde{S}},\boldsymbol{\rm N})$, and $(\tilde{S}',B_{\tilde{S}'}+A_{\tilde{S}'},\boldsymbol{\rm N})$. 
By Step \ref{step4abund}, the property of being abundant for $K_{\tilde{X}'}+\tilde{B}'+\tilde{A}'+\boldsymbol{\rm M}_{\tilde{X}'}$ was reduced to that for $K_{\tilde{S}'}+B_{\tilde{S}'}+A_{\tilde{S}'}+\boldsymbol{\rm N}_{\tilde{S}'}$. 
In this step, we will study the morphism $(S,B_{S}+A_{S},\boldsymbol{\rm N}) \to Z$, and we will construct a generalized lc pair on $\tilde{T}$ by using $(\tilde{S}',B_{\tilde{S}'}+A_{\tilde{S}'},\boldsymbol{\rm N})$. 

We check that $\pi_{S}\colon S\to Z$, $(S,B_{S},\boldsymbol{\rm N})$, $C|_{S}$, and $A_{S}\sim_{\mathbb{R}}\pi_{S}^{*}A_{Z}$ satisfy conditions of Theorem \ref{thm--abund-sub}. 
By construction, $(S,B_{S},\boldsymbol{\rm N})$ is a generalized lc pair such that  $\boldsymbol{\rm N}=\overline{M_{T}}$ is a finite $\mathbb{R}_{>0}$-linear combination of b-nef $\mathbb{Q}$-b-Cartier $\mathbb{Q}$-b-divisors. 
Thus, condition (\ref{thm--abund-sub-(I)}) of Theorem \ref{thm--abund-sub} is satisfied. 
Since $(X,B+tC, \boldsymbol{\rm M})$ is a generalized lc pair for some $t>0$ and $S$ is a component of $\llcorner B \lrcorner$, the divisor $C|_{S}$ is a well-defined effective $\mathbb{R}$-Cartier divisor on $S$. 
By construction of divisorial adjunction for generalized pairs \cite[Definition 4.7]{bz}, 
the generalized pair $(S,B_{S}+tC|_{S},\boldsymbol{\rm N})$ is equal to the generalized pair constructed with the divisorial adjunction for $(X,B+tC, \boldsymbol{\rm M})$. 
Since $(X,B+tC, \boldsymbol{\rm M})$ is generalized lc, we see that $(S,B_{S}+tC|_{S},\boldsymbol{\rm N})$ is generalized lc. 
Thus, condition (\ref{thm--abund-sub-(II-a)}) of Theorem \ref{thm--abund-sub} is satisfied. 
By construction, we have 
$$K_{S}+B_{S}+C|_{S}+\boldsymbol{\rm N}_{S}\sim_{\mathbb{R}}(K_{X}+B+C+\boldsymbol{\rm M}_{X})|_{S}\sim_{\mathbb{R},Z}0.$$ 
Thus, condition (\ref{thm--abund-sub-(II-b)}) of Theorem \ref{thm--abund-sub} is satisfied. 
Finally, $(S,B_{S}+A_{S},\boldsymbol{\rm N})$ is generalized lc. 
From these discussions, we see that $\pi_{S}\colon S\to Z$, $(S,B_{S},\boldsymbol{\rm N})$, $C|_{S}$, and $A_{S}\sim_{\mathbb{R}}\pi_{S}^{*}A_{Z}$ satisfy conditions of Theorem \ref{thm--abund-sub}. 
By induction hypothesis of Theorem \ref{thm--abund-sub}, the divisor $K_{S}+B_{S}+A_{S}+\boldsymbol{\rm N}_{S}$ is abundant. 

We put $A_{\tilde{T}}=\tau^{*}A_{\tilde{S}}$, which is equal to the pullback of $A_{S}=A|_{S}$ to $\tilde{T}$. 
Thanks to the relation $\tau^{*}A_{\tilde{S}}\leq \tau'^{-1}_{*}A_{\tilde{S}'}$, we can write
\begin{equation*}\tag{4}\label{proof-thm--abund-sub-(4)}
K_{\tilde{T}}+\Psi_{\tilde{T}}+A_{\tilde{T}}+\boldsymbol{\rm N}_{\tilde{T}}=\tau'^{*}(K_{\tilde{S}'}+B_{\tilde{S}'}+A_{\tilde{S}'}+\boldsymbol{\rm N}_{\tilde{S}'})+E_{\tilde{T}}
\end{equation*} 
where $\Psi_{\tilde{T}}+A_{\tilde{T}}$ and $E_{\tilde{T}}$ have no common components. 
Note that $(\tilde{T},\Psi_{\tilde{T}}+A_{\tilde{T}})$ is a log smooth lc pair because $\tau' \colon \tilde{T} \to \tilde{S}'$ is a log resolution of $\bigl(\tilde{S}',{\rm Supp}(B_{\tilde{S}'}+A_{\tilde{S}'})\bigr)$. 
By (\ref{proof-thm--abund-sub-(4)}) above, it is sufficient to prove that $K_{\tilde{T}}+\Psi_{\tilde{T}}+A_{\tilde{T}}+\boldsymbol{\rm N}_{\tilde{T}}$ is abundant. 
\end{step4}

\begin{step4}\label{step6abund} 
In this step, we prove that $K_{\tilde{T}}+\Psi_{\tilde{T}}+A_{\tilde{T}}+\boldsymbol{\rm N}_{\tilde{T}}$ is abundant. 
We may write
$$K_{\tilde{T}}+\Psi_{\tilde{T}}+A_{\tilde{T}}+\boldsymbol{\rm N}_{\tilde{T}}=(f_{\tilde{S}}\circ \tau)^{*}(K_{S}+B_{S}+A_{S}+\boldsymbol{\rm N}_{S})+\Xi_{+}-\Xi_{-},$$
where $\Xi_{+}\geq0$ and $\Xi_{-}\geq0$ have no common components. 
Since $A_{\tilde{T}}=(f_{\tilde{S}}\circ \tau)^{*}A_{S}$, to apply Lemma \ref{lem--abund-birat}, we show that $\Xi_{+}$ is exceptional over $S$. 

Suppose by contradiction that there is a component $Q$ of $\Xi_{+}$ which is not exceptional over $S$. 
Then 
$$a(Q,\tilde{T}, \Psi_{\tilde{T}}+A_{\tilde{T}}+\boldsymbol{\rm N}_{\tilde{T}})< a(Q,S,B_{S}+A_{S}+\boldsymbol{\rm N}_{S})\leq 1.$$
By (\ref{proof-thm--abund-sub-(3)}) in Step \ref{step4abund} and Lemma \ref{lem--adjunction}, we have
\begin{equation*}
a(Q,S,B_{S}+A_{S}+\boldsymbol{\rm N}_{S})=a(Q,\tilde{S}, B_{\tilde{S}}+A_{\tilde{S}}+\boldsymbol{\rm N}_{\tilde{S}})
\end{equation*}
Since the birational map $\tilde{X}\dashrightarrow \tilde{X}'$ is a sequence of steps of a $(K_{\tilde{X}}+\tilde{B}+\tilde{A}+\boldsymbol{\rm M}_{\tilde{X}})$-MMP, by taking a suitable common resolution of $\tilde{X}\dashrightarrow \tilde{X}'$ and the negativity lemma (that is the generalized pair analogue of \cite[Lemma 4.2.10]{fujino-sp-ter}), we have 
$$a(Q,\tilde{S}, B_{\tilde{S}}+A_{\tilde{S}}+\boldsymbol{\rm N}_{\tilde{S}})\leq a(Q,\tilde{S}', B_{\tilde{S}'}+A_{\tilde{S}'}+\boldsymbol{\rm N}_{\tilde{S}'}).$$
Finally, since $\Psi_{\tilde{T}}+A_{\tilde{T}}$ and $E_{\tilde{T}}$ have no common components (see (\ref{proof-thm--abund-sub-(4)}) in Step \ref{step5abund}), we can write 
$$a(Q,\tilde{T}, \Psi_{\tilde{T}}+A_{\tilde{T}}+\boldsymbol{\rm N}_{\tilde{T}})={\rm min}\{a(Q,\tilde{S}', B_{\tilde{S}'}+A_{\tilde{S}'}+\boldsymbol{\rm N}_{\tilde{S}'}),1\}.$$ 
By combining these relations, we obtain
\begin{equation*}
\begin{split}
a(Q,S,B_{S}+A_{S}+\boldsymbol{\rm N}_{S})&={\rm min}\{a(Q,S,B_{S}+A_{S}+\boldsymbol{\rm N}_{S}),1\}\\
&={\rm min}\{a(Q,\tilde{S}, B_{\tilde{S}}+A_{\tilde{S}}+\boldsymbol{\rm N}_{\tilde{S}}),1\}\\
&\leq{\rm min}\{a(Q,\tilde{S}', B_{\tilde{S}'}+A_{\tilde{S}'}+\boldsymbol{\rm N}_{\tilde{S}'}),1\}\\
&=a(Q,\tilde{T}, \Psi_{\tilde{T}}+A_{\tilde{T}}+\boldsymbol{\rm N}_{\tilde{T}})\\
&<a(Q,S,B_{S}+A_{S}+\boldsymbol{\rm N}_{S}),
\end{split}
\end{equation*}
which is a contradiction. 
In this way, we see that $\Xi_{+}$ is exceptional over $S$. 
By applying Lemma \ref{lem--abund-birat} to $(S,B_{S},\boldsymbol{\rm N})\to Z$, $A_{S}\sim_{\mathbb{R}}\pi^{*}_{S}A_{Z}$, $\tilde{T}\to S$, and $\Psi_{\tilde{T}}$, 
we see that the divisor $K_{\tilde{T}}+\Psi_{\tilde{T}}+A_{\tilde{T}}+\boldsymbol{\rm N}_{\tilde{T}}$ is abundant. 
We finish this step. 
\end{step4}
By (\ref{proof-thm--abund-sub-(4)}) in Step \ref{step5abund}, the divisor $K_{\tilde{S}'}+B_{\tilde{S}'}+A_{\tilde{S}'}+\boldsymbol{\rm N}_{\tilde{S}'}$ is abundant. 
By the arguments in Step \ref{step4abund}, the divisor $K_{\tilde{X}'}+\tilde{B}'+\tilde{A}'+\boldsymbol{\rm M}_{\tilde{X}'}$ is abundant. 
By construction of $\tilde{X}\dashrightarrow \tilde{X}'$ in Step \ref{step3abund}, the divisor $K_{\tilde{X}}+\tilde{B}+\tilde{A}+\boldsymbol{\rm M}_{\tilde{X}}$ is abundant. 
Finally, using (\ref{proof-thm--abund-sub-(3)}) in Step \ref{step4abund}, we see that the divisor $K_{X}+B+A+\boldsymbol{\rm M}_{X}$ is abundant. 
We are done. 
\end{proof}

\section{Proof of main results}\label{sec4}

In this section, we prove the main result of this paper. 

\subsection{Non-vanishing theorem}
In this subsection, we firstly prove Theorem \ref{thm--intro-abund-main} 
, then we prove Theorem \ref{thm--abund-main-2}, and finally we prove Theorem \ref{thm--non-vanishing-lc-divisor-2} which is a non-$\mathbb{R}$-Cartier analogue of Theorem \ref{thm--abund-main-2}. 
Theorem \ref{thm--gen-non-vanishing-lp} is a special case of Theorem \ref{thm--non-vanishing-lc-divisor-2}. 


\begin{thm}\label{thm--abund-main}
Let $\pi\colon X\to Z$ be a morphism of normal projective varieties, and let $(X,B, \boldsymbol{\rm M})$ be a $\mathbb{Q}$-factorial generalized dlt pair. 
Suppose that  
\makeatletter 
\renewcommand{\p@enumii}{II-} 
\makeatother
\begin{enumerate}[(I)]
\item \label{thm--abund-main-(I)}
$\boldsymbol{\rm M}$ is a finite $\mathbb{R}_{>0}$-linear combination of b-nef $\mathbb{Q}$-b-Cartier $\mathbb{Q}$-b-divisors, and  
\item \label{thm--abund-main-(II)}
there is an effective $\mathbb{R}$-divisor $C$ on $X$ such that
\begin{enumerate}[({II-}a)]
\item \label{thm--abund-main-(II-a)}
the generalized pair $(X,B+tC, \boldsymbol{\rm M})$ is generalized lc for some $t>0$, and  
\item \label{thm--abund-main-(II-b)}
$K_{X}+B+C+\boldsymbol{\rm M}_{X}\sim_{\mathbb{R},Z}0$.
\end{enumerate}
\end{enumerate}
Let $A_{Z}$ be an ample $\mathbb{R}$-divisor on $Z$. 
Pick $A\sim_{\mathbb{R}}\pi^{*}A_{Z}$ so that $A\geq0$ and $(X,B+A, \boldsymbol{\rm M})$ is a generalized lc pair whose generalized lc centers are those of $(X,B, \boldsymbol{\rm M})$. 

We put $\Delta=B+A$. 
Then, for any sequence of steps of a $(K_{X}+\Delta+\boldsymbol{\rm M}_{X})$-MMP
$$(X,\Delta, \boldsymbol{\rm M})=:(X_{0},\Delta_{0},\boldsymbol{\rm M}) \dashrightarrow \cdots \dashrightarrow (X_{i},\Delta_{i},\boldsymbol{\rm M}) \dashrightarrow \cdots,$$
the divisor $K_{X_{i}}+\Delta_{i}+\boldsymbol{\rm M}_{X_{i}}$ is log abundant with respect to $(X_{i},\Delta_{i},\boldsymbol{\rm M})$ for every $i$.  
Furthermore, if $(X,B, \boldsymbol{\rm M})$ is generalized klt and $K_{X}+B+A+\boldsymbol{\rm M}_{X}$ is pseudo-effective, then $K_{X}+B+A+\boldsymbol{\rm M}_{X}$ birationally has the Nakayama--Zariski decomposition with semi-ample positive part. 
\end{thm}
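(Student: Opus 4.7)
The plan is to first establish log abundance at the initial model $(X,\Delta,\boldsymbol{\rm M})$ by combining Theorem \ref{thm--abund-sub} with iterated divisorial adjunction, then to propagate log abundance through the MMP by a birational-invariance argument, and finally to handle the additional statement in the generalized klt case via Proposition \ref{prop--genabklt}.

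At $i=0$, plain abundance of $K_X+\Delta+\boldsymbol{\rm M}_X$ is Theorem \ref{thm--abund-sub} applied to the given data $(\pi,C,A)$. For a generalized lc center $W$ of $(X,B+A,\boldsymbol{\rm M})$, the assumption on $A$ together with the $\mathbb{Q}$-factorial generalized dlt structure of $(X,B,\boldsymbol{\rm M})$ forces $W$ to be a stratum of $\llcorner B\lrcorner$. I would reach the normalization $W^{\nu}$ by iterated divisorial adjunction (\cite[Definition 4.7]{bz}, Lemma \ref{lem--adjunction}), restricting step by step to components of the current boundary until $W$ appears. At each step the required structure is preserved: the nef part remains a finite $\mathbb{R}_{>0}$-linear combination of b-nef $\mathbb{Q}$-b-Cartier $\mathbb{Q}$-b-divisors; the generalized lc property of the auxiliary pair with $+tC$ added persists; the relation $K+B+C+\boldsymbol{\rm M}\sim_{\mathbb{R},Z}0$ restricts coherently; and $A$ restricts to a divisor $\mathbb{R}$-linearly equivalent to the pullback of $A_Z$ under the induced morphism to $Z$. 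On the resulting pair $(W^{\nu},B_W,\boldsymbol{\rm N})$ together with the restrictions $C_W,A_W$, Theorem \ref{thm--abund-sub} applies in lower dimension and yields abundance of $(K_X+\Delta+\boldsymbol{\rm M}_X)|_{W^{\nu}}$, establishing log abundance at $i=0$.

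For $i\geq 1$, log abundance should propagate through each step $\phi_i\colon (X_i,\Delta_i,\boldsymbol{\rm M})\dashrightarrow (X_{i+1},\Delta_{i+1},\boldsymbol{\rm M})$ of the $\mathbb{Q}$-factorial generalized dlt MMP. Taking a common log resolution and applying the negativity lemma for generalized pairs (the generalized-pair analogue of \cite[Lemma 4.2.10]{fujino-sp-ter}) shows that the invariant Iitaka and numerical dimensions of $K_{X_i}+\Delta_i+\boldsymbol{\rm M}_{X_i}$ are preserved through $\phi_i$, so plain abundance propagates. For each generalized lc center of $(X_{i+1},\Delta_{i+1},\boldsymbol{\rm M})$, the generalized dlt structure tracks a corresponding stratum on $X_i$, and the same common-resolution plus negativity-lemma argument on that stratum transfers abundance from the $X_i$-level to the $X_{i+1}$-level. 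Iterating yields log abundance at every step.

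For the final assertion, when $(X,B,\boldsymbol{\rm M})$ is generalized klt and $K_X+B+A+\boldsymbol{\rm M}_X$ is pseudo-effective, there are no generalized lc centers, so log abundance collapses to plain abundance; the argument of Step \ref{step1abund} of the proof of Theorem \ref{thm--abund-sub} reduces the situation to Proposition \ref{prop--genabklt}, whose explicit conclusion already provides the desired birational Nakayama--Zariski decomposition with semi-ample positive part. The main obstacle throughout will be the MMP propagation for generalized lc centers: when a divisorial contraction contracts a component of $\llcorner\Delta_i\lrcorner$, the combinatorics of strata on $X_{i+1}$ only indirectly corresponds to that on $X_i$, and one must verify that comparison of restricted log canonical divisors via common resolutions transfers abundance faithfully between strata. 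The preserved $\mathbb{Q}$-factorial generalized dlt structure along the MMP and the negativity lemma for generalized pairs are the essential tools.
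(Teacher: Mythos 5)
Your overall architecture — Theorem \ref{thm--abund-sub} for plain abundance, adjunction to reach the strata, and Proposition \ref{prop--genabklt} for the generalized klt statement — matches the paper at the endpoints, and your treatment of the final (Nakayama--Zariski) assertion is essentially the paper's. But there is a genuine gap in the middle, precisely at the point you flag as "the main obstacle": the propagation of log abundance along the MMP. A common resolution $\tau\colon T\to S_{i}$, $\tau'\colon T\to S_{i+1}$ together with the negativity lemma only tells you that the two pulled-back restricted divisors differ by $\Xi_{+}-\Xi_{-}$ with a sign control on discrepancies; it does \emph{not} transfer abundance, because $\kappa_{\iota}$ and $\kappa_{\sigma}$ are not preserved when one adds an effective divisor that is exceptional over only one side of the correspondence. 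The paper closes exactly this gap with Lemma \ref{lem--abund-birat}: after showing (via the negativity lemma) that the effective part $\Xi_{+}$ is exceptional \emph{over $S$}, it applies Lemma \ref{lem--abund-birat} to $\tilde{T}\to S$, and that lemma is not a formal birational invariance statement — its proof runs a fresh MMP over $S$ to contract $\Xi_{+}$ and then re-invokes Theorem \ref{thm--abund-sub} in dimension $\dim X-1$ with the restricted data $(S,B_{S},\boldsymbol{\rm N})\to Z$, $C|_{S}$, $A_{S}\sim_{\mathbb{R}}\pi_{S}^{*}A_{Z}$. Your proposal never verifies that these restricted data again satisfy hypotheses (I), (II-a), (II-b), which is what makes the induction close.

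Moreover, your step-by-step propagation from $X_{i}$ to $X_{i+1}$ cannot be repaired by simply inserting Lemma \ref{lem--abund-birat} at each step: that lemma (and Theorem \ref{thm--abund-sub}) require the polarization to be, up to $\mathbb{R}$-linear equivalence, the pullback of an ample divisor from $Z$, and this structure is available only on $S\subset X$ and its resolutions, not on the intermediate strata $S_{i}$ for $i\geq 1$ (the MMP is not run over $Z$, so $A_{S_{i}}$ loses this form). This is why the paper always compares $S_{i}$ directly back to the stratum $S$ on the original $X$ (after passing to a $\mathbb{Q}$-factorial generalized dlt model so that $S$ is dominated by a component of $\llcorner\Delta'\lrcorner$, rather than by iterated adjunction as you propose — iterated adjunction to higher-codimension strata would additionally require a higher-codimensional adjunction theory for generalized pairs that the paper does not develop). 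To fix your argument, replace the $X_{i}\dashrightarrow X_{i+1}$ propagation by the single comparison $S\dashrightarrow S_{i}$, prove $\Xi_{+}$ is $\tau$-exceptional by the discrepancy computation, and invoke Lemma \ref{lem--abund-birat} with base $(S,B_{S},\boldsymbol{\rm N})\to Z$.
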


\begin{proof}
When $(X,B, \boldsymbol{\rm M})$ is generalized klt and $K_{X}+B+A+\boldsymbol{\rm M}_{X}$ is pseudo-effective, the divisor $C$ of (\ref{thm--abund-main-(II)}) is vertical over $Z$. 
Thus, we can find an open subset $U \subset Z$ such that $(K_{X}+B+\boldsymbol{\rm M}_{X})|_{\pi^{-1}(U)}\sim_{\mathbb{R},U}0$. 
Then the second assertions directly follows from Proposition \ref{prop--genabklt}. 
So it is enough to show the property of being log abundant. 

The strategy is very similar to \cite[Proof of Theorem 5.5]{hashizumehu}. 
Let $(X,B, \boldsymbol{\rm M})\to Z$ and $A$ be as in Theorem \ref{thm--abund-main}. 
Put $\Delta=B+A$, and let
$$(X,\Delta, \boldsymbol{\rm M})=:(X_{0},\Delta_{0},\boldsymbol{\rm M}) \dashrightarrow \cdots \dashrightarrow (X_{i},\Delta_{i},\boldsymbol{\rm M}) \dashrightarrow \cdots$$
be a sequence of steps of a $(K_{X}+\Delta+\boldsymbol{\rm M}_{X})$-MMP. 
Fix an index $i$. 
By Theorem \ref{thm--abund-sub}, the divisor $K_{X}+B+A+\boldsymbol{\rm M}_{X}$ is abundant, so $K_{X_{i}}+\Delta_{i}+\boldsymbol{\rm M}_{X_{i}}$ is also abundant. 
Therefore, it is sufficient to prove that $(K_{X_{i}}+\Delta_{i}+\boldsymbol{\rm M}_{X_{i}})|_{S_{i}}$ is abundant for every generalized lc center $S_{i}$ of $(X_{i},\Delta_{i},\boldsymbol{\rm M})$. 
We fix a generalized lc center $S_{i}$ of $(X_{i},\Delta_{i},\boldsymbol{\rm M})$. 
Then there is a generalized lc center $S$ of $(X,\Delta, \boldsymbol{\rm M})$ such that the map $X\dashrightarrow X_{i}$ induces a birational map $S\dashrightarrow S_{i}$. 

Let $f\colon (X',\Delta',\boldsymbol{\rm M}) \to (X,B+A, \boldsymbol{\rm M})$ be a $\mathbb{Q}$-factorial generalized dlt model such that there is a component $S'$ of $\llcorner \Delta' \lrcorner$ from which $f$ induces a surjective morphism $S' \to S$. 
Such model always exists (Remark \ref{rem--gen-dlt} (\ref{rem--gen-dlt-(2)})). 
Taking a lift of the $(K_{X}+\Delta+\boldsymbol{\rm M}_{X})$-MMP as in \cite[3.5]{hanli}, we get a sequence of steps of a $(K_{X'}+\Delta'+\boldsymbol{\rm M}_{X'})$-MMP
$$(X',\Delta', \boldsymbol{\rm M})=:(X'_{0},\Delta'_{0},\boldsymbol{\rm M}) \dashrightarrow \cdots \dashrightarrow (X'_{k_{i}},\Delta'_{k_{i}},\boldsymbol{\rm M})$$
and $\mathbb{Q}$-factorial generalized dlt models $f_{i}\colon (X'_{k_{i}},\Delta'_{k_{i}},\boldsymbol{\rm M}) \to (X_{i},\Delta_{i}, \boldsymbol{\rm M})$ such that the birational transform $S'_{k_{i}}$ of $S'$ on $X'_{k_{i}}$ is a component of $\llcorner \Delta'_{k_{i}}\lrcorner$ and $f_{i}\colon X'_{k_{i}}\to X_{i}$ induces a surjective morphism $S'_{k_{i}}\to S_{i}$. 
By construction, we have
$$K_{X'_{k_{i}}}+\Delta'_{k_{i}}+\boldsymbol{\rm M}_{X'_{k_{i}}}=f_{i}^{*}(K_{X_{i}}+\Delta_{i}+\boldsymbol{\rm M}_{X_{i}}),$$
thus $(K_{X_{i}}+\Delta_{i}+\boldsymbol{\rm M}_{X_{i}})|_{S_{i}}$ is abundant if and only if $(K_{X'_{k_{i}}}+\Delta'_{k_{i}}+\boldsymbol{\rm M}_{X'_{k_{i}}})|_{S'_{k_{i}}}$ is abundant. 
Furthermore, it is easy to check that the conditions (\ref{thm--abund-main-(I)}), (\ref{thm--abund-main-(II-a)}), and (\ref{thm--abund-main-(II-b)}) in Theorem \ref{thm--abund-main} hold after replacing $(X,B+A, \boldsymbol{\rm M})$ by $(X',\Delta',\boldsymbol{\rm M})$. 
From these arguments, by replacing the $(K_{X}+\Delta+\boldsymbol{\rm M}_{X})$-MMP and $S_{i}$, we may assume that $S_{i}$ is a component of $\llcorner \Delta_{i} \lrcorner$. 
By the assumption of Theorem \ref{thm--abund-main} that $(X,B, \boldsymbol{\rm M})$ is a $\mathbb{Q}$-factorial generalized dlt pair and $(X,B+A, \boldsymbol{\rm M})$ is a generalized lc pair whose generalized lc centers are those of $(X,B, \boldsymbol{\rm M})$, it follows that 
$S$ is a component of $\llcorner B \lrcorner$. 
Let $B_{i}$ and $A_{i}$ be the birational transforms of $B$ and $A$ on $X_{i}$, respectively. 
Then $S_{i}$ is a component of $\llcorner B_{i} \lrcorner$. 

Taking a suitable resolution of the normalization of the graph of $X \dashrightarrow X_{i}$, we can find a common resolution $Y \to X$ and $Y\to X_{i}$ of $X \dashrightarrow X_{i}$ and a subvariety $T\subset Y$ which is birational to $S$ and $S_{i}$ such that the induced morphisms $T\to S$ and $T\to S_{i}$ form a common resolution of $S\dashrightarrow S_{i}$. 
Since $T$ has codimension one in $Y$, replacing $Y$ by a higher resolution if necessary, we may assume $\boldsymbol{\rm M}=\overline{\boldsymbol{\rm M}_{Y}}$.  
We pick $M_{T}\sim_{\mathbb{R}}\boldsymbol{\rm M}_{Y}|_{T}$ which is an $\mathbb{R}_{>0}$-linear combination of nef Cartier divisors on $T$, and we denote the b-divisor $\overline{M_{T}}$ by $\boldsymbol{\rm N}$. 
With this $\boldsymbol{\rm N}$, by applying divisorial adjunction to $(X,B,\boldsymbol{\rm M})$ and $(X_{i},B_{i}, \boldsymbol{\rm M})$, we construct generalized pairs
$$(S,B_{S},\boldsymbol{\rm N}) \quad {\rm and} \quad (S_{i},B_{S_{i}}, \boldsymbol{\rm N}).$$
Here, we regard $\boldsymbol{\rm N}$ as b-divisors on $S$ and $S_{i}$. 
We take a common resolution $\tau\colon \tilde{T}\to S$ and $\tau'\colon \tilde{T}\to S_{i}$ of the birational map $S\dashrightarrow S_{i}$ such that $\tau$ and $\tau'$ are log resolutions of $(S, {\rm Supp}B_{S})$ and $(S_{i}, {\rm Supp}B_{S_{i}})$ respectively, $\tau'^{-1}_{*}B_{S_{i}}\cup{\rm Ex}(\tau)\cup{\rm Ex}(\tau')$ is a simple normal crossing divisor, and the induced birational map $\tilde{T}\dashrightarrow T$ is a morphism. 
We can regard $\boldsymbol{\rm N}$ as a b-divisor on $\tilde{T}$. 
We set 
$$A_{S}=A|_{S}\quad  {\rm and} \quad A_{S_{i}}=A_{i}|_{S_{i}}.$$ 
By construction of divisorial adjunction for generalized pairs \cite[Definition 4.7]{bz}, the generalized pair $(S,B_{S}+A_{S},\boldsymbol{\rm N})$ is equal to the generalized pair constructed with the divisorial adjunction for $(X,B+A,\boldsymbol{\rm M})$. 
Therefore, $(S,B_{S}+A_{S},\boldsymbol{\rm N})$ is generalized lc. 
By the same reason, $(S_{i},B_{S_{i}}+A_{S_{i}},\boldsymbol{\rm N})$ is also generalized lc. 
By \cite[Lemma 2.1]{hashizumehu} and \cite[Lemma 2.2]{hashizumehu} and replacing $A$ with a general one, we may assume $A_{S}\geq0$, $A_{S_{i}}\geq0$, $\tau^{*}A_{S}\leq \tau'^{-1}_{*}A_{S_{i}}$, and $\tau'\colon \tilde{T}\to S_{i}$ is a log resolution of $\bigl(S_{i},{\rm Supp}(B_{S_{i}}+A_{S_{i}})\bigr)$. 
Since 
$$K_{S_{i}}+B_{S_{i}}+A_{S_{i}}+\boldsymbol{\rm N}_{S_{i}} \sim_{\mathbb{R}}(K_{X_{i}}+\Delta_{i}+\boldsymbol{\rm M}_{X_{i}})|_{S_{i}},$$
it is sufficient to prove that $K_{S_{i}}+B_{S_{i}}+A_{S_{i}}+\boldsymbol{\rm N}_{S_{i}}$ is abundant. 

Since $\tau^{*}A_{S}\leq \tau'^{-1}_{*}A_{S_{i}}$, we can write 
\begin{equation*}\tag{$\clubsuit$}\label{proof--thm--abund-main-(clubsuit)}
\begin{split}
K_{\tilde{T}}+\Psi_{\tilde{T}}+\tau^{*}A_{S}+\boldsymbol{\rm N}_{\tilde{T}}=\tau'^{*}(K_{S_{i}}+B_{S_{i}}+A_{S_{i}}+\boldsymbol{\rm N}_{S_{i}})+E_{\tilde{T}}
\end{split}
\end{equation*}
with $\Psi_{\tilde{T}}\geq 0$ and $E_{\tilde{T}}\geq0$  such that $\Psi_{\tilde{T}}+\tau^{*}A_{S}$ and $E_{\tilde{T}}$ have no common components. 
Since $\tau'\colon \tilde{T}\to S_{i}$ is a log resolution of $\bigl(S_{i},{\rm Supp}(B_{S_{i}}+A_{S_{i}})\bigr)$, the pair $(\tilde{T}, \Psi_{\tilde{T}}+\tau^{*}A_{S})$ is a log smooth lc pair. 
Now we can write
$$K_{\tilde{T}}+\Psi_{\tilde{T}}+\tau^{*}A_{S}+\boldsymbol{\rm N}_{\tilde{T}}=\tau^{*}(K_{S}+B_{S}+A_{S}+\boldsymbol{\rm N}_{S})+\Xi_{+}-\Xi_{-}$$
such that $\Xi_{+}\geq0$ and $\Xi_{-}\geq0$ have no common components. 
Pick a component $Q$ of $\Xi_{+}$. 
Since $X\dashrightarrow X_{i}$ is a sequence of steps of a $(K_{X}+B+A+\boldsymbol{\rm M}_{X})$-MMP, by taking a suitable common resolution of $X \dashrightarrow X_{i}$ and the negativity lemma (or applying the generalized pair analogue of \cite[Lemma 4.2.10]{fujino-sp-ter}), we have 
$$a(Q,S, B_{S}+A_{S}+\boldsymbol{\rm N}_{S})\leq a(Q,S_{i}, B_{S_{i}}+A_{S_{i}}+\boldsymbol{\rm N}_{S_{i}}).$$
By construction of $\Psi_{\tilde{T}}$ and $E_{\tilde{T}}$, we can write 
$$a(Q,\tilde{T}, \Psi_{\tilde{T}}+\tau^{*}A_{S}+\boldsymbol{\rm N}_{\tilde{T}})={\rm min}\{a(Q,S_{i}, B_{S_{i}}+A_{S_{i}}+\boldsymbol{\rm N}_{S_{i}}),1\}.$$ 
Thus, we obtain
\begin{equation*}
\begin{split}
a(Q,S, B_{S}+A_{S}+\boldsymbol{\rm N}_{S})&>a(Q,\tilde{T}, \Psi_{\tilde{T}}+\tau^{*}A_{S}+\boldsymbol{\rm N}_{\tilde{T}})\\
&={\rm min}\{a(Q,S_{i}, B_{S_{i}}+A_{S_{i}}+\boldsymbol{\rm N}_{S_{i}}),1\}\\
&\geq{\rm min}\{a(Q,S, B_{S}+A_{S}+\boldsymbol{\rm N}_{S}),1\}.
\end{split}
\end{equation*}
Thus, it follows that $a(Q,S, B_{S}+A_{S}+\boldsymbol{\rm N}_{S})>1$ which shows that $Q$ is $\tau$-exceptional. 
Therefore, $\Xi_{+}$ is $\tau$-exceptional. 

Now we can apply Lemma \ref{lem--abund-birat} to $(S,B_{S}, \boldsymbol{\rm N})\to Z$, $C|_{S}$, $A_{S}$, $\tau\colon \tilde{T}\to S$, and $\Psi_{\tilde{T}}$. 
Then we see that $K_{\tilde{T}}+\Psi_{\tilde{T}}+\tau^{*}A_{S}+\boldsymbol{\rm N}_{\tilde{T}}$ is abundant. 
By (\ref{proof--thm--abund-main-(clubsuit)}), the divisor $K_{S_{i}}+B_{S_{i}}+A_{S_{i}}+\boldsymbol{\rm N}_{S_{i}}$ is abundant, therefore $(K_{X_{i}}+\Delta_{i}+\boldsymbol{\rm M}_{X_{i}})|_{S_{i}}$ is abundant. 
Since $i$ is arbitrary index and $S_{i}$ is arbitrary generalized lc center of $(X_{i},\Delta_{i},\boldsymbol{\rm M})$, we see that the divisor $K_{X_{i}}+\Delta_{i}+\boldsymbol{\rm M}_{X_{i}}$ is log abundant with respect to $(X_{i},\Delta_{i},\boldsymbol{\rm M})$ for every $i$. 
So we are done. 
\end{proof}

\begin{proof}[Proof of Theorem \ref{thm--intro-abund-main}]
The theorem is the case of Theorem \ref{thm--abund-main} where $\pi$ is the morphism of a $\mathbb{Q}$-factorial dlt model and $C=0$. 
\end{proof}

\begin{proof}[Proof of Theorem \ref{thm--abund-main-2}]
The first statement immediately follows from Theorem \ref{thm--abund-sub}. 
Thus, it is enough to prove the second statement. 
Suppose that $\boldsymbol{\rm M}_{X}$ is $\mathbb{R}$-Cartier. 
Fix a real number $\alpha \geq 1$. 
We pick a real number $\epsilon\in (0,1)$ such that $\epsilon \boldsymbol{\rm M}_{X}+A$ is ample, and we pick general $A_{\epsilon}\sim_{\mathbb{R}} \epsilon \boldsymbol{\rm M}_{X}+A$. 
Then $K_{X}+B+A_{\epsilon}+ (1-\epsilon)\boldsymbol{\rm M}_{X}$ is pseudo-effective. 
Let $f \colon \tilde{X} \to X$ be a log resolution of $(X,{\rm Supp}B)$ such that $\boldsymbol{\rm M}=\overline{\boldsymbol{\rm M}_{\tilde{X}}}$. 
We may write 
$$K_{\tilde{X}}+\tilde{B}+(1-\epsilon)\boldsymbol{\rm M}_{\tilde{X}}=f^{*}(K_{X}+B+(1-\epsilon)\boldsymbol{\rm M}_{X})+\tilde{E}$$
with effective $\mathbb{R}$-divisors $\tilde{B}$ and $\tilde{E}$ on $\tilde{X}$ having no common components. 
Furthermore, since $\boldsymbol{\rm M}_{X}$ is $\mathbb{R}$-Cartier, we may write 
$$\boldsymbol{\rm M}_{\tilde{X}}+\tilde{F}=f^{*}\boldsymbol{\rm M}_{X}$$
 for some effective $f$-exceptional $\mathbb{R}$-divisor $\tilde{F}$ on $\tilde{X}$. 
Since $(X,B, \boldsymbol{\rm M})$ is generalized lc, we can find a real number $\delta>0$ such that $(\tilde{X},\tilde{B}+\delta \tilde{F})$ is a log smooth lc pair. 
Put $\alpha'=\alpha-\epsilon$. 
Note that $\alpha'\geq 1-\epsilon$. 
Since $\alpha'=(\alpha-1)+(1-\epsilon)$, we have
$$K_{\tilde{X}}+\tilde{B}+f^{*}A_{\epsilon}+\alpha' \boldsymbol{\rm M}_{\tilde{X}}=f^{*}(K_{X}+B+A_{\epsilon}+\alpha' \boldsymbol{\rm M}_{X})+\tilde{E}- (\alpha-1) \tilde{F}.$$
Since $K_{X}+B+A_{\epsilon}+ (1-\epsilon)\boldsymbol{\rm M}_{X}$ is pseudo-effective, we see that $K_{\tilde{X}}+\tilde{B}+f^{*}A_{\epsilon}+(1-\epsilon) \boldsymbol{\rm M}_{\tilde{X}}$ is pseudo-effective. 
Since $\alpha'\geq 1-\epsilon$, we see that the divisor $K_{\tilde{X}}+\tilde{B}+f^{*}A_{\epsilon}+\alpha' \boldsymbol{\rm M}_{\tilde{X}}$ is pseudo-effective. 

We run a $(K_{\tilde{X}}+\tilde{B}+\alpha' \boldsymbol{\rm M}_{\tilde{X}})$-MMP over $X$ with scaling of an ample divisor. 
By the negativity lemma, after finitely many steps we obtain a projective birational morphism $f'\colon \tilde{X}' \to X$ and a projective $\mathbb{Q}$-factorial generalized dlt pair $(\tilde{X}',\tilde{B}', \alpha' \boldsymbol{\rm M})$ such that the effective part of $\tilde{E}- (\alpha-1) \tilde{F}$ is contracted by the birational map $\tilde{X}\dashrightarrow \tilde{X}'$. 
By replacing $\delta$ with a smaller one, we may assume that $\tilde{X}\dashrightarrow \tilde{X}'$ is a sequence of steps of a $(K_{\tilde{X}}+\tilde{B}+\delta \tilde{F}+\alpha' \boldsymbol{\rm M}_{\tilde{X}})$-MMP. 
Let $\tilde{E}'$ and $\tilde{F}'$ be the birational transforms of $\tilde{E}$ and $\tilde{F}$ on $\tilde{X}'$, respectively. 
By construction, the generalized pair $(\tilde{X}',\tilde{B}'+\delta \tilde{F}', \alpha' \boldsymbol{\rm M})$ is generalized lc, $(\alpha-1)\tilde{F}'-\tilde{E}'$ is effective, and
$$K_{\tilde{X}'}+\tilde{B}'+\alpha' \boldsymbol{\rm M}_{\tilde{X}'}+\bigl((\alpha-1)\tilde{F}'-\tilde{E}'\bigr)=f'^{*}(K_{X}+B+\alpha' \boldsymbol{\rm M}_{X}).$$
Now the $\mathbb{Q}$-factorial generalized dlt pair $(\tilde{X}',\tilde{B}', \alpha' \boldsymbol{\rm M})$ and the morphism $f'\colon \tilde{X}'\to X$ satisfy the conditions of Theorem \ref{thm--abund-sub}. 
Indeed, condition (\ref{thm--abund-sub-(I)}) of Theorem \ref{thm--abund-sub} is obvious, 
the effective $\mathbb{R}$-divisor $(\alpha-1)\tilde{F}'-\tilde{E}'$ satisfies  (\ref{thm--abund-sub-(II-b)}) of Theorem \ref{thm--abund-sub} because of the above relation, and $(\alpha-1)\tilde{F}'-\tilde{E}'$ satisfies (\ref{thm--abund-sub-(II-a)}) of Theorem \ref{thm--abund-sub} because $(\tilde{X}',\tilde{B}'+\delta \tilde{F}', \alpha' \boldsymbol{\rm M})$ is a generalized lc pair and ${\rm Supp}\bigl((\alpha-1)\tilde{F}'-\tilde{E}'\bigr)\subset {\rm Supp}\tilde{F}'$ which show the existence of a real number $t>0$ such that the generalized pair $\bigl(\tilde{X}',\tilde{B}'+t((\alpha-1)\tilde{F}'-\tilde{E}'), \alpha' \boldsymbol{\rm M}\bigr)$ is generalized lc. 
Applying Theorem \ref{thm--abund-sub} to $(\tilde{X}',\tilde{B}', \alpha' \boldsymbol{\rm M})\to X$ and $A_{\epsilon}$, we see that the divisor $K_{\tilde{X}'}+\tilde{B}'+\alpha' \boldsymbol{\rm M}_{\tilde{X}'}+f'^{*}A_{\epsilon}$ is in particular abundant. 
Since $K_{\tilde{X}}+\tilde{B}+f^{*}A_{\epsilon}+\alpha'\boldsymbol{\rm M}_{\tilde{X}}$ is pseudo-effective, $K_{\tilde{X}'}+\tilde{B}'+f'^{*}A_{\epsilon}+\alpha'\boldsymbol{\rm M}_{\tilde{X}'}$ is also pseudo-effective. 
Thus, we can find an effective $\mathbb{R}$-divisor $\tilde{D}'_{\alpha}$ on $\tilde{X}'$ such that 
$$\tilde{D}'_{\alpha}\sim_{\mathbb{R}}K_{\tilde{X}'}+\tilde{B}'+f'^{*}A_{\epsilon}+\alpha' \boldsymbol{\rm M}_{\tilde{X}'}.$$
By putting $D_{\alpha}=f'_{*}\tilde{D}'_{\alpha}$ and using $\alpha'=\alpha-\epsilon$ and $A_{\epsilon}\sim_{\mathbb{R}} \epsilon \boldsymbol{\rm M}_{X}+A$, 
 we obtain 
$$D_{\alpha}\sim_{\mathbb{R}}K_{X}+B+A_{\epsilon}+\alpha' \boldsymbol{\rm M}_{X}\sim_{\mathbb{R}} K_{X}+B+A+\alpha \boldsymbol{\rm M}_{X}.$$ 
Thus, the second statement of Theorem \ref{thm--abund-main-2} holds. 
We are done. 
\end{proof}

From now on, we discuss a non-$\mathbb{R}$-Cartier analogue of Theorem \ref{thm--abund-main-2}.

\begin{defn}[Pseudo-effective $\mathbb{R}$-divisor]\label{defn--positivity-divisor}
Let $X$ be a normal projective variety, and let $D$ be an $\mathbb{R}$-divisor on $X$, which is not necessarily $\mathbb{R}$-Cartier. 
We say that $D$ is {\em pseudo-effective} if $D+A$ is $\mathbb{R}$-linearly equivalent to an effective $\mathbb{R}$-divisor for all ample $\mathbb{R}$-divisors $A$ on $X$. 
\end{defn}

\begin{lem}\label{lem--equiv-positivity}
Let $X$ be a normal projective variety, and let $D$ be an $\mathbb{R}$-divisor on $X$. 
Let $f \colon Y \to X$ be a small birational morphism from a normal projective variety $Y$ such that $f^{-1}_{*}D$ is $\mathbb{R}$-Cartier. 
Then $D$ is pseudo-effective if and only if $f^{-1}_{*}D$ is pseudo-effective in the usual sense. 
\end{lem}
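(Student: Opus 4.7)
The plan is to reduce both directions to the standard characterization that an $\mathbb{R}$-Cartier divisor $D'$ on a projective variety is pseudo-effective (usual sense) if and only if $D'+A'$ is $\mathbb{R}$-linearly equivalent to an effective $\mathbb{R}$-divisor for every ample $\mathbb{R}$-divisor $A'$; this follows from ``pseudo-effective $+$ ample $=$ big'' together with ``big $=$ ample $+$ effective up to $\sim_{\mathbb{R}}$''. The key geometric input will be the smallness of $f$, which I will use through two bookkeeping facts: for every $\mathbb{R}$-Cartier divisor $H$ on $X$ the strict transform $f^{-1}_{*}H$ coincides with $f^{*}H$, and $f_{*}$ and $f^{-1}_{*}$ are mutually inverse on Weil divisors and preserve $\sim_{\mathbb{R}}$ for $\mathbb{R}$-Cartier divisors (because rational functions on $X$ and $Y$ coincide via the birational identification, with no exceptional divisor to absorb any difference).

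For the forward direction, I would assume $D$ pseudo-effective in the sense of Definition~\ref{defn--positivity-divisor} and fix an ample $\mathbb{R}$-divisor $A'$ on $Y$. First I will choose an ample $\mathbb{R}$-divisor $H$ on $X$ and a small $\varepsilon>0$ such that $A'-\varepsilon f^{*}H$ remains ample on $Y$, which is possible because $f^{*}H$ is nef and ampleness is an open condition. The hypothesis gives $D+\varepsilon H \sim_{\mathbb{R}} E$ with $E\geq 0$ on $X$; taking strict transforms yields $f^{-1}_{*}D+\varepsilon f^{*}H \sim_{\mathbb{R}} f^{-1}_{*}E$ on $Y$. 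Then
$$f^{-1}_{*}D + A' \sim_{\mathbb{R}} f^{-1}_{*}E + (A' - \varepsilon f^{*}H)$$
exhibits the left side as effective plus ample, hence as $\sim_{\mathbb{R}}$ to an effective $\mathbb{R}$-divisor, which gives pseudo-effectivity of $f^{-1}_{*}D$.

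For the converse, I would assume $f^{-1}_{*}D$ pseudo-effective in the usual sense and let $A$ be an ample $\mathbb{R}$-divisor on $X$. Since $f^{*}A$ is nef and big on $Y$, I will write $f^{*}A \sim_{\mathbb{R}} A_{1}+E_{1}$ with $A_{1}$ ample and $E_{1}\geq 0$. Pseudo-effectivity of $f^{-1}_{*}D$ combined with ampleness of $A_{1}$ makes $f^{-1}_{*}D+A_{1}$ big, hence $\sim_{\mathbb{R}}$ to an effective divisor; adding $E_{1}$ produces an effective $F$ on $Y$ with $f^{-1}_{*}D+f^{*}A \sim_{\mathbb{R}} F$. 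Applying $f_{*}$ and invoking the identities $f_{*}f^{-1}_{*}D=D$ and $f_{*}f^{*}A=A$, this pushes down to $D+A \sim_{\mathbb{R}} f_{*}F$, where $f_{*}F$ is effective.

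The main obstacle I anticipate is purely bookkeeping: one must make precise the interaction of strict transforms and pullbacks with $\mathbb{R}$-linear equivalence when $D$ itself need not be $\mathbb{R}$-Cartier. Once these compatibilities are formulated cleanly using the smallness of $f$, both implications reduce to the short computations above with no further input.
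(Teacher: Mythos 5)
Your proposal is correct and follows essentially the same route as the paper's proof: the content in both cases is (i) the standard fact that pseudo-effectivity of the $\mathbb{R}$-Cartier divisor $f^{-1}_{*}D$ can be tested by adding the nef and big classes $f^{*}A$ (via ``pseudo-effective $+$ ample $=$ big'' and Kodaira's lemma), and (ii) the observation that, because $f$ is small, effective divisors in the class of $f^{-1}_{*}D+f^{*}A$ on $Y$ correspond exactly to effective divisors in the class of $D+A$ on $X$. Your write-up merely separates the two implications and makes the perturbation arguments explicit where the paper compresses them into one line.
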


\begin{proof}
Since $f^{-1}_{*}D$ is $\mathbb{R}$-Cartier, $f^{-1}_{*}D$ is pseudo-effective if and only if $f^{-1}_{*}D+f^{*}A$ is $\mathbb{R}$-linearly equivalent to an effective $\mathbb{R}$-divisor for all ample $\mathbb{R}$-divisors $A$ on $X$. 
Since $f$ is small, for all ample $A$, the existence of $E_{Y}\geq 0$ satisfying $E_{Y}\sim_{\mathbb{R}}f^{-1}_{*}D+f^{*}A$ is equivalent to the existence of $E\geq 0$ satisfying $E\sim_{\mathbb{R}}D+A$. 
By Definition \ref{defn--positivity-divisor}, we see that $D$ is pseudo-effective if and only if $f^{-1}_{*}D$ is pseudo-effective. 
\end{proof}

\begin{thm}\label{thm--non-vanishing-lc-divisor-2}
Let $(X,B,\boldsymbol{\rm M})$ be a projective generalized lc pair such that $\boldsymbol{\rm M}$ is a finite $\mathbb{R}_{>0}$-linear combination of b-nef $\mathbb{Q}$-b-Cartier $\mathbb{Q}$-b-divisors. 
Let $A$ be an ample $\mathbb{R}$-divisor on $X$. 
Suppose that there is $t\in[0,1)$ such that $K_{X}+B+A+t\boldsymbol{\rm M}_{X}$ is pseudo-effective in the sense of Definition \ref{defn--positivity-divisor}. 

Then, for every real number $\alpha \geq t$, there exists an effective $\mathbb{R}$-divisor $D_{\alpha}$ on $X$ such that $K_{X}+B+A+\alpha \boldsymbol{\rm M}_{X} \sim_{\mathbb{R}}D_{\alpha}$.
\end{thm}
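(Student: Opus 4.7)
The plan is to reduce Theorem~\ref{thm--non-vanishing-lc-divisor-2} to Theorem~\ref{thm--abund-main-2}~(\ref{thm--abund-main-2-(1)}) by passing to the log resolution of $(X,B,\boldsymbol{\rm M})$, where the trace $\boldsymbol{\rm M}_{\tilde X}$ is automatically $\mathbb R$-Cartier (in fact nef), and perturbing the pulled-back polarization $f^*A$ into an ample part. First I take a log resolution $f\colon\tilde X\to X$ of $(X,\operatorname{Supp} B)$ with $\boldsymbol{\rm M}=\overline{\boldsymbol{\rm M}_{\tilde X}}$ and write
\[
K_{\tilde X}+\tilde B+\boldsymbol{\rm M}_{\tilde X}=f^*(K_X+B+\boldsymbol{\rm M}_X)+\tilde E
\]
with $\tilde B,\tilde E\ge 0$ having no common components. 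Since $f^*A$ is nef and big, I write $f^*A\sim_{\mathbb R}A'+G$ with $A'$ ample on $\tilde X$ and $G\ge 0$ chosen (by first subtracting a small ample from $f^*A$ and then applying the Kodaira lemma) to have sufficiently small coefficients so that $(\tilde X,\tilde B+G,\boldsymbol{\rm M})$ is generalized lc. Since $\boldsymbol{\rm M}$ already descends to $\tilde X$, a direct inspection of generalized log discrepancies on higher log resolutions shows that $(\tilde X,\tilde B+G,\alpha\boldsymbol{\rm M})$ is also generalized lc for every $\alpha\ge 0$. Because $f$ is birational, it suffices to construct an effective $\tilde D_\alpha\ge 0$ on $\tilde X$ with $K_{\tilde X}+\tilde B+f^*A+\alpha\boldsymbol{\rm M}_{\tilde X}\sim_{\mathbb R}\tilde D_\alpha$ and then set $D_\alpha:=f_*\tilde D_\alpha$.

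The crucial input is to verify that $K_{\tilde X}+\tilde B+G+A'+\alpha\boldsymbol{\rm M}_{\tilde X}\sim_{\mathbb R}K_{\tilde X}+\tilde B+f^*A+\alpha\boldsymbol{\rm M}_{\tilde X}$ is pseudo-effective on $\tilde X$ for every $\alpha\ge t$, so that Theorem~\ref{thm--abund-main-2}~(\ref{thm--abund-main-2-(1)}) applied to the generalized lc pair $(\tilde X,\tilde B+G,\alpha\boldsymbol{\rm M})$ with polarization $A'$ provides $\tilde D_\alpha$. Splitting as $(K_{\tilde X}+\tilde B+f^*A+t\boldsymbol{\rm M}_{\tilde X})+(\alpha-t)\boldsymbol{\rm M}_{\tilde X}$ and using that $(\alpha-t)\boldsymbol{\rm M}_{\tilde X}$ is nef on $\tilde X$, the verification reduces to the pseudo-effectivity of $K_{\tilde X}+\tilde B+f^*A+t\boldsymbol{\rm M}_{\tilde X}$. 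This is the main obstacle: I obtain it by taking a small $\mathbb Q$-factorial modification $h\colon Y\to X$ of $(X,B,\boldsymbol{\rm M})$ (whose existence follows from running a suitable MMP on a $\mathbb Q$-factorial dlt model supplied by Theorem~\ref{thm--dlt-model}, invoking Han--Li's MMP for generalized lc pairs to contract the exceptional divisors), applying Lemma~\ref{lem--equiv-positivity} to transfer the Definition~\ref{defn--positivity-divisor}-style hypothesis on $X$ into the usual pseudo-effectivity of $K_Y+h^{-1}_*B+h^*A+t\boldsymbol{\rm M}_Y$ on $Y$, and then transporting this to $\tilde X$ along a common resolution $\pi\colon\tilde X\to Y$: the negativity lemma forces the $\pi$-exceptional divisor $F_\pi:=\pi^*\boldsymbol{\rm M}_Y-\boldsymbol{\rm M}_{\tilde X}$ to satisfy $F_\pi\le 0$ (since $-F_\pi$ is $\pi$-nef and $\pi_*F_\pi=0$), and a moving-curve argument, exploiting that a very general moving curve on $\tilde X$ avoids the $\pi$-exceptional locus and thus pairs with $K_{\tilde X}+\tilde B+f^*A+t\boldsymbol{\rm M}_{\tilde X}$ as with $\pi^*(K_Y+h^{-1}_*B+h^*A+t\boldsymbol{\rm M}_Y)$, delivers the pseudo-effectivity on $\tilde X$ via the BDPP characterization.

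Once this pseudo-effectivity is in place, Theorem~\ref{thm--abund-main-2}~(\ref{thm--abund-main-2-(1)}) produces $\tilde D_\alpha\ge 0$ on $\tilde X$ with $K_{\tilde X}+\tilde B+G+A'+\alpha\boldsymbol{\rm M}_{\tilde X}\sim_{\mathbb R}\tilde D_\alpha$, equivalently $K_{\tilde X}+\tilde B+f^*A+\alpha\boldsymbol{\rm M}_{\tilde X}\sim_{\mathbb R}\tilde D_\alpha$, and $D_\alpha:=f_*\tilde D_\alpha\ge 0$ satisfies $K_X+B+A+\alpha\boldsymbol{\rm M}_X\sim_{\mathbb R}D_\alpha$ as (possibly non-$\mathbb R$-Cartier) $\mathbb R$-divisors, since $f_*$ preserves $\sim_{\mathbb R}$, sends $f^*A$ to $A$ and $\boldsymbol{\rm M}_{\tilde X}$ to $\boldsymbol{\rm M}_X$, and kills all $f$-exceptional contributions of $\tilde B$. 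The hard part, as indicated, is the pseudo-effectivity transfer in the second paragraph, where the weak Definition~\ref{defn--positivity-divisor} hypothesis, the non-$\mathbb R$-Cartier nature of $\boldsymbol{\rm M}_X$, and the need to work on the log resolution where $\boldsymbol{\rm M}$ is actually nef all interact and must be handled by combining Lemma~\ref{lem--equiv-positivity} with the negativity lemma on a common resolution.
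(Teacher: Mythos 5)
Your overall reduction --- pass to a log resolution $f\colon\tilde X\to X$ where $\boldsymbol{\rm M}$ descends, verify pseudo-effectivity of $K_{\tilde X}+\tilde B+f^*A+\alpha\boldsymbol{\rm M}_{\tilde X}$, split $f^*A$ into ample plus small effective, and invoke Theorem \ref{thm--abund-main-2} (\ref{thm--abund-main-2-(1)}) --- is a genuinely different route from the paper, which instead passes to the small model $h\colon Y\to X$ of \cite[Theorem 1.2]{has-class} on which $K_Y+B_Y$ is $h$-ample (hence $\boldsymbol{\rm M}_Y$ is $\mathbb{R}$-Cartier), absorbs $\epsilon(K_Y+B_Y)$ into the polarization so as to rescale $t$ to $t/(1-\epsilon)<1$, and then applies Theorem \ref{thm--abund-main-2} (\ref{thm--abund-main-2-(2)}). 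However, your version has a genuine gap at exactly the point where the paper invokes \cite{has-class}: to use Lemma \ref{lem--equiv-positivity} you need a small birational model of $X$ on which the relevant divisor becomes $\mathbb{R}$-Cartier, and you propose a small $\mathbb{Q}$-factorialization obtained by contracting the exceptional divisors of a dlt model via Han--Li's MMP. This does not work: the dlt model of Theorem \ref{thm--dlt-model} extracts precisely the prime divisors of generalized log discrepancy $0$, so for any admissible boundary on it the crepant difference with $g^{*}(K_X+B+\boldsymbol{\rm M}_X)$ is $\leq 0$ along those divisors, and \cite[Proposition 3.8]{hanli} --- which contracts the \emph{effective} exceptional part --- gives you nothing. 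Small $\mathbb{Q}$-factorializations of varieties underlying generalized lc pairs are not available from the tools in this paper (recall that such an $X$ need not underlie any lc pair), and supplying a suitable small model is the whole content of the external input \cite[Theorem 1.2]{has-class}. Without it, your transfer of the Definition \ref{defn--positivity-divisor} hypothesis to honest pseudo-effectivity on $\tilde X$ is unestablished.

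Second, the sign in your negativity-lemma step is reversed: with $-F_\pi=\boldsymbol{\rm M}_{\tilde X}-\pi^{*}\boldsymbol{\rm M}_Y$ being $\pi$-nef and $\pi_{*}F_\pi=0$, the negativity lemma gives $F_\pi\geq 0$, not $F_\pi\leq 0$. This matters: with your sign, $K_{\tilde X}+\tilde B+f^{*}A+t\boldsymbol{\rm M}_{\tilde X}=\pi^{*}(K_Y+B_Y+h^{*}A+t\boldsymbol{\rm M}_Y)+\tilde E+(1-t)F_\pi$ would be the pullback of a pseudo-effective divisor \emph{minus} an effective exceptional divisor, which is not pseudo-effective in general, and the moving-curve/BDPP argument you sketch (testing only against curves avoiding ${\rm Ex}(\pi)$) is not valid, since movable classes arise as limits and can pair negatively with effective exceptional divisors. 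With the correct sign $F_\pi\geq 0$ the required pseudo-effectivity is immediate (pullback of pseudo-effective plus effective, using $t<1$), so no curve argument is needed; but this only repairs the second step, not the first. Were the small model granted, the remainder of your argument --- generalized lc-ness of $(\tilde X,\tilde B+G,\alpha\boldsymbol{\rm M})$ for all $\alpha\geq 0$ because $\boldsymbol{\rm M}$ descends to $\tilde X$, and the final pushforward --- is sound, modulo choosing $G$ so that its support contains no stratum of $\llcorner\tilde B\lrcorner$.
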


\begin{proof}
Fix a real number $\alpha \geq t$.
By \cite[Proposition 4.12]{has-class}, with notations as in \cite{has-class}, the pair of $X$ and $B$ is pseudo-lc defined in \cite[Definition 4.2]{has-class}. 
By \cite[Theorem 1.2]{has-class}, there exists a small birational morphism $h \colon Y\to X$ from a normal projective variety $Y$ such that if we put $B_{Y}=h^{-1}_{*}B$, then $K_{Y}+B_{Y}$ is an $h$-ample $\mathbb{R}$-Cartier divisor on $Y$. 
We can find a real number $\epsilon>0$ such that $\tfrac{t}{1-\epsilon}<1$ and  $\epsilon (K_{Y}+B_{Y})+h^{*}A_{Y}$ is an ample $\mathbb{R}$-divisor on $Y$. 
Putting $A_{Y}=\tfrac{1}{1-\epsilon}(\epsilon(K_{Y}+B_{Y})+h^{*}A_{Y})$, we have
$$K_{Y}+B_{Y}+h^{*}A+t\boldsymbol{\rm M}_{Y}= (1-\epsilon)\left(K_{Y}+B_{Y}+A_{Y}+\frac{t}{1-\epsilon}\boldsymbol{\rm M}_{Y}\right).$$
Then $\boldsymbol{\rm M}_{Y}$ is $\mathbb{R}$-Cartier and pseudo-effective. 
Since $K_{X}+B+A+t\boldsymbol{\rm M}_{X}$ is pseudo-effective by hypothesis of Theorem \ref{thm--non-vanishing-lc-divisor-2}, by Lemma \ref{lem--equiv-positivity}, we see that $K_{Y}+B_{Y}+A_{Y}+\tfrac{t}{1-\epsilon}\boldsymbol{\rm M}_{Y}$ is pseudo-effective with $\frac{t}{1-\epsilon}<1$. 
Therefore, for any $\alpha \geq t$, the relation
$$K_{Y}+B_{Y}+h^{*}A+\alpha\boldsymbol{\rm M}_{Y}= (1-\epsilon)\left(K_{Y}+B_{Y}+A_{Y}+\frac{\alpha}{1-\epsilon}\boldsymbol{\rm M}_{Y}\right)$$
holds and $K_{Y}+B_{Y}+A_{Y}+\frac{\alpha}{1-\epsilon}\boldsymbol{\rm M}_{Y}$ is pseudo-effective. 
Since $h \colon Y \to X$ is small, the generalized pair $(Y,B_{Y},\boldsymbol{\rm M})$ is generalized lc. 
If there is an effective $\mathbb{R}$-divisor $D_{Y}$ on $Y$ such that $D_{Y}\sim_{\mathbb{R}} K_{Y}+B_{Y}+A_{Y}+\frac{\alpha}{1-\epsilon}\boldsymbol{\rm M}_{Y}$, then $(1-\epsilon)h_{*}D_{Y}$ is effective and it satisfies $(1-\epsilon)h_{*}D_{Y}\sim_{\mathbb{R}}K_{X}+B+A+\alpha \boldsymbol{\rm M}_{X}$. 
Thus, it is sufficient to show the existence of an effective $\mathbb{R}$-divisor $D_{Y}$ on $Y$ such that $D_{Y}\sim_{\mathbb{R}} K_{Y}+B_{Y}+A_{Y}+\frac{\alpha}{1-\epsilon}\boldsymbol{\rm M}_{Y}$. 
In this way, by replacing $X$ (resp.~$B$, $A$, $t$, $\alpha$) with $Y$ (resp.~$B_{Y}$, $A_{Y}$, $\frac{t}{1-\epsilon}$, $\frac{\alpha}{1-\epsilon}$), we may assume that $\boldsymbol{\rm M}_{X}$ is $\mathbb{R}$-Cartier. 
Note that this condition shows that the pair $(X,B)$ is lc. 

If $\alpha=0$, then Theorem \ref{thm--non-vanishing-lc-divisor-2} follows from \cite[Theorem 1.5]{hashizumehu}. 
Therefore, we may assume $\alpha>0$. 
Put $t'={\rm min}\{1,\alpha\}$. 
Then, the generalized pair $(X,B,t'\boldsymbol{\rm M})$ is generalized lc. 
By hypothesis, $K_{X}+B+A+t'\boldsymbol{\rm M}_{X}$ is pseudo-effective. 
By Theorem \ref{thm--abund-main-2} (\ref{thm--abund-main-2-(2)}), we can find an effective $\mathbb{R}$-divisor $D_{\alpha}$ on $X$ such that $K_{X}+B+A+\alpha \boldsymbol{\rm M}_{X} \sim_{\mathbb{R}}D_{\alpha}$.
\end{proof}

\begin{proof}[Proof of Theorem \ref{thm--gen-non-vanishing-lp}]
It is the case of $t=0$ of Theorem \ref{thm--non-vanishing-lc-divisor-2}. 
\end{proof}

The following result is out of interests of this paper, but we write down for possibility of future use.

\begin{thm}\label{thm--non-vanishing-lc-divisor}
Let $(X,B,\boldsymbol{\rm M})$ be a projective generalized lc pair, and let $A$ be an ample $\mathbb{R}$-divisor on $X$. 
Then the followings hold.
\begin{enumerate}[(1)]
\item \label{thm--non-vanishing-lc-divisor-(1)}
If $K_{X}+B+A$ is pseudo-effective in the sense of Definition \ref{defn--positivity-divisor}, then there exists an effective $\mathbb{R}$-divisor $D$ on $X$ such that $K_{X}+B+A \sim_{\mathbb{R}}D$.  
\item \label{thm--non-vanishing-lc-divisor-(2)}
Let $G$ be a $\mathbb{Q}$-divisor on $X$ such that $G \sim_{\mathbb{R}} K_{X}+B+A$. 
Then the graded ring 
$$\mathcal{R}(X,G)=\underset{m \in \mathbb{Z}_{\geq 0}}{\bigoplus}H^{0}(X,\mathcal{O}_{X}(\llcorner mG \lrcorner))$$
is a finitely generated.
\end{enumerate}
\end{thm}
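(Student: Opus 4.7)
The strategy is to reduce both parts to the case of an ordinary lc pair with an ample polarization via a small birational modification supplied by \cite{has-class}, and then invoke \cite{hashizumehu}. Since $(X,B,\boldsymbol{\rm M})$ is generalized lc, \cite[Proposition 4.12]{has-class} shows that $(X,B)$ is pseudo-lc in the sense of \cite[Definition 4.2]{has-class}, so \cite[Theorem 1.2]{has-class} yields a small birational morphism $h\colon Y\to X$ such that $K_{Y}+B_{Y}$ is $\mathbb{R}$-Cartier and $h$-ample for $B_{Y}:=h^{-1}_{*}B$. Smallness of $h$ forces $(Y,B_{Y})$ to be an ordinary lc pair. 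Pick $\epsilon>0$ small enough that $\epsilon(K_{Y}+B_{Y})+h^{*}A$ is ample and set $A_{Y}:=\tfrac{1}{1-\epsilon}\bigl(\epsilon(K_{Y}+B_{Y})+h^{*}A\bigr)$; then $A_{Y}$ is an ample $\mathbb{R}$-divisor and
$$K_{Y}+B_{Y}+h^{*}A\sim_{\mathbb{R}}(1-\epsilon)(K_{Y}+B_{Y}+A_{Y}).$$

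For part (\ref{thm--non-vanishing-lc-divisor-(1)}), Lemma \ref{lem--equiv-positivity} translates the pseudo-effectivity of $K_{X}+B+A$ (in the sense of Definition \ref{defn--positivity-divisor}) into the usual pseudo-effectivity of $K_{Y}+B_{Y}+h^{*}A$, hence of $K_{Y}+B_{Y}+A_{Y}$. The non-vanishing theorem for lc pairs with a polarization \cite[Theorem 1.5]{hashizumehu}, applied to $(Y,B_{Y})$ and the ample $A_{Y}$, then produces an effective $\mathbb{R}$-divisor $E_{Y}\sim_{\mathbb{R}}K_{Y}+B_{Y}+A_{Y}$; the divisor $D:=(1-\epsilon)h_{*}E_{Y}$ is effective on $X$ and satisfies $D\sim_{\mathbb{R}}K_{X}+B+A$.

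For part (\ref{thm--non-vanishing-lc-divisor-(2)}), smallness of $h$ yields $h_{*}\mathcal{O}_{Y}(h^{-1}_{*}W)=\mathcal{O}_{X}(W)$ for every Weil divisor $W$ on $X$; applying this to $W=\llcorner mG\lrcorner$ for every $m\geq0$ gives $\mathcal{R}(X,G)\cong\mathcal{R}(Y,G_{Y})$ with $G_{Y}:=h^{-1}_{*}G$ a $\mathbb{Q}$-divisor satisfying $G_{Y}\sim_{\mathbb{R}}(1-\epsilon)(K_{Y}+B_{Y}+A_{Y})$. After a further small $\mathbb{Q}$-factorialization we may assume $Y$ is $\mathbb{Q}$-factorial, so that $G_{Y}$ is a $\mathbb{Q}$-Cartier $\mathbb{Q}$-divisor. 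Invoking the log abundance of $K_{Y}+B_{Y}+A_{Y}$ (the lc case of Theorem \ref{thm--intro-abund-main}, from \cite{hashizumehu,has-finite}), we run a $(K_{Y}+B_{Y}+A_{Y})$-MMP with scaling of an ample divisor to reach a good minimal model on which $K_{Y}+B_{Y}+A_{Y}$ is semi-ample; semi-ampleness on this model then gives the finite generation of $\mathcal{R}(Y,G_{Y})$, and hence of $\mathcal{R}(X,G)$.

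The principal obstacle is part (\ref{thm--non-vanishing-lc-divisor-(2)}): one must convert $\mathbb{R}$-linear equivalence between the $\mathbb{Q}$-divisor $G_{Y}$ and the $\mathbb{R}$-Cartier divisor $(1-\epsilon)(K_{Y}+B_{Y}+A_{Y})$ into a comparison of section rings respecting the $\mathbb{Z}_{\geq 0}$-grading. This relies on the fact that for a $\mathbb{Q}$-Cartier $\mathbb{Q}$-divisor, $\mathbb{R}$-linear equivalence to an effective $\mathbb{R}$-divisor implies $\mathbb{Q}$-linear equivalence to an effective $\mathbb{Q}$-divisor, and on the invariance of the section ring under MMP steps terminating at a semi-ample divisor.
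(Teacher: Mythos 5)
Your part (\ref{thm--non-vanishing-lc-divisor-(1)}) is essentially the paper's argument: the paper phrases it as the $\alpha=t=0$ case of Theorem \ref{thm--non-vanishing-lc-divisor-2}, whose proof is exactly your reduction via the small morphism $h\colon Y\to X$ from \cite[Theorem 1.2]{has-class}, the rescaling $K_{Y}+B_{Y}+h^{*}A\sim_{\mathbb{R}}(1-\epsilon)(K_{Y}+B_{Y}+A_{Y})$, Lemma \ref{lem--equiv-positivity}, and \cite[Theorem 1.5]{hashizumehu}.

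For part (\ref{thm--non-vanishing-lc-divisor-(2)}) you follow the paper's reduction (smallness of $h$ identifies $\mathcal{R}(X,G)$ with $\mathcal{R}(Y,h^{-1}_{*}G)$ and $h^{-1}_{*}G\sim_{\mathbb{R}}(1-\epsilon)(K_{Y}+B_{Y}+A_{Y})$), but then you diverge: the paper simply cites \cite[Theorem 1.7]{hashizumehu}, which is stated precisely for $\mathbb{Q}$-divisors $\mathbb{R}$-linearly equivalent to $K+B+A$ on a polarized lc pair and already packages the finite generation, whereas you try to re-derive that result by running a $(K_{Y}+B_{Y}+A_{Y})$-MMP to a good minimal model. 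The genuine gap is exactly the ``principal obstacle'' you name but do not close. Two points are missing. First, you must show that each step of the MMP preserves $H^{0}(\cdot,\mathcal{O}(\llcorner mG_{Y}\lrcorner))$ for every $m$; this is not automatic from the steps being $(K+B+A)$-negative, because $\llcorner mG_{Y}\lrcorner$ is not a multiple of $K_{Y}+B_{Y}+A_{Y}$ and the round-down does not commute with strict transform, so one has to argue via $\mathrm{div}(f)+mG_{Y}\geq 0$ and the negativity lemma. Second, on the minimal model the strict transform of $G_{Y}$ is only $\mathbb{R}$-linearly equivalent to a semi-ample $\mathbb{R}$-divisor; to conclude finite generation you need it to be semi-ample as a $\mathbb{Q}$-Cartier $\mathbb{Q}$-divisor. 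The fact you invoke ($\mathbb{R}$-linear equivalence to an effective divisor implies $\mathbb{Q}$-linear equivalence to an effective divisor) is strictly weaker than what is required; the correct statement is that a $\mathbb{Q}$-Cartier $\mathbb{Q}$-divisor $\mathbb{R}$-linearly equivalent to a semi-ample $\mathbb{R}$-divisor is itself semi-ample, which is true but needs the rational-polyhedrality argument (and is, in effect, part of what \cite[Theorem 1.7]{hashizumehu} is doing for you). Citing that theorem, as the paper does, closes the proof immediately; as written, your derivation of finite generation from semi-ampleness is incomplete.
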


\begin{proof}
The first assertion is the case of $\alpha=0$ of Theorem \ref{thm--non-vanishing-lc-divisor-2}. 
We will prove the second assertion. 
By \cite[Proposition 4.12]{has-class}, notations as in \cite{has-class}, the pair $\langle X,B \rangle$ of $X$ and $B$ is pseudo-lc in the sense of \cite[Definition 4.2]{has-class}. 
By \cite[Theorem 1.2]{has-class}, there exists a small birational morphism $h \colon Y\to X$ from a normal projective variety $Y$ such that if we put $B_{Y}=h^{-1}_{*}B$, then $K_{Y}+B_{Y}$ is an $h$-ample $\mathbb{R}$-Cartier divisor on $Y$. 
Then $t(K_{Y}+B_{Y})+h^{*}A_{Y}$ is an ample $\mathbb{R}$-divisor on $Y$ for some $t\in (0,1)$. 
Taking a general member $A_{Y}\sim_{\mathbb{R}}\tfrac{1}{1-t}(t(K_{Y}+B_{Y})+h^{*}A_{Y})$, we have
$$K_{Y}+B_{Y}+h^{*}A\sim_{\mathbb{R}} (1-t)(K_{Y}+B_{Y}+A_{Y}).$$
Since $h$ is small, for each $m$ we have
$$H^{0}(X,\mathcal{O}_{X}(\llcorner mG \lrcorner))\simeq H^{0}(Y,\mathcal{O}_{Y}(\llcorner m h^{-1}_{*}G \lrcorner)).$$
Therefore, the graded ring $\mathcal{R}(X,G)$ is finitely generated if and only if 
$$\mathcal{R}(Y,h^{-1}_{*}G)=\underset{m \in \mathbb{Z}_{\geq 0}}{\bigoplus}H^{0}(Y,\mathcal{O}_{Y}(\llcorner m h^{-1}_{*}G \lrcorner))$$
is finitely generated. 
Furthermore, we have $h^{-1}_{*}G\sim_{\mathbb{R}}(1-t)(K_{Y}+B_{Y}+A_{Y})$. 
Applying \cite[Theorem 1.5]{hashizumehu} and \cite[Theorem 1.7]{hashizumehu}, we see that $\mathcal{R}(Y,h^{-1}_{*}G)$ is finitely generated, thus $\mathcal{R}(X,G)$ is finitely generated. 
We finish the proof. 
\end{proof}

\subsection{On the Kodaira type vanishing theorem}
We give an example which shows that the Kodaira type vanishing theorem does not necessarily hold in the situation of Theorem \ref{thm--abund-main-2} (\ref{thm--abund-main-2-(2)}) or Theorem \ref{thm--gen-non-vanishing-lp}. 
The following theorem is a stronger statement than Theorem \ref{thm--intro-van-example}.  

\begin{thm}\label{thm--van-example}
Let $d$ be a positive integer. 
Then, there is a $\mathbb{Q}$-factorial generalized klt pair $(X,B,\boldsymbol{\rm M})$, an ample $\mathbb{Q}$-divisor $A$ on $X$, and a Cartier divisor $D$ on $X$ such that 
\begin{itemize}
\item
${\rm dim}X=3$ and $K_{X}+B+A$ is pseudo-effective, 
\item
$D\sim_{\mathbb{Q}}K_{X}+B+A+t\boldsymbol{\rm M}_{X}$ for some $t>1$, and 
\item
${\rm dim}H^{1}(X,\mathcal{O}_{X}(D))\geq d$. 
\end{itemize}
\end{thm}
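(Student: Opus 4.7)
The plan is to exhibit an explicit example, drawing the non-vanishing of $H^{1}$ from an elliptic curve factor. Let $E$ be an elliptic curve and take $X$ to be a projective $\mathbb{Q}$-factorial $3$-fold fibred over $E$, for instance $X=E\times\mathbb{P}^{2}$ or a mild birational modification thereof; let $\pi_{1}\colon X\to E$ and $\pi_{2}\colon X\to\mathbb{P}^{2}$ denote the two projections. The source of the required non-vanishing is the K\"unneth formula
\[
h^{1}\bigl(X,\pi_{2}^{*}\mathcal{O}_{\mathbb{P}^{2}}(n)\bigr) \;=\; h^{0}(\mathbb{P}^{2},\mathcal{O}(n))\cdot h^{1}(E,\mathcal{O}_{E}) \;=\; \binom{n+2}{2},
\]
which exceeds $d$ for $n$ chosen large enough. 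The goal is thus to realise $\pi_{2}^{*}\mathcal{O}_{\mathbb{P}^{2}}(n)$ as $\mathcal{O}_{X}(D)$ for some Cartier $D$ with $D\sim_{\mathbb{Q}}K_{X}+B+A+t\boldsymbol{\rm M}_{X}$, $A$ ample $\mathbb{Q}$-divisor, $t>1$, and $(X,B,\boldsymbol{\rm M})$ generalized klt.

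The technical heart is the construction of $\boldsymbol{\rm M}$. Since $K_{X}=-3\pi_{2}^{*}H$ has no component in the elliptic direction while an ample $A$ necessarily does, the divisor $t\boldsymbol{\rm M}_{X}$ must carry a strictly negative coefficient along a fibre $\pi_{1}^{-1}(\mathrm{pt})$ in order to cancel the elliptic part of $A$ in $D$. Crucially, although $X$ being $\mathbb{Q}$-factorial forces $\boldsymbol{\rm M}_{X}$ to be $\mathbb{Q}$-Cartier, the b-nef b-divisor $\boldsymbol{\rm M}$ need not descend to $X$ itself: it can be specified on a higher birational model $f\colon\tilde{X}\to X$ where $\boldsymbol{\rm M}_{\tilde{X}}$ is nef, and the trace $\boldsymbol{\rm M}_{X}=f_{*}\boldsymbol{\rm M}_{\tilde{X}}$ is then automatically $\mathbb{Q}$-Cartier but need not be nef. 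I would take $\tilde{X}$ to be a suitable blow-up of $X$ and choose $\boldsymbol{\rm M}_{\tilde{X}}$ whose non-exceptional strict-transform part carries the targeted negative coefficient on a chosen prime divisor, with the exceptional contributions absorbing this negativity so that the total class is globally nef on $\tilde{X}$. Next, pick $B$ with small enough coefficients so that, writing $K_{\tilde{X}}+\tilde{B}+\boldsymbol{\rm M}_{\tilde{X}}=f^{*}(K_{X}+B+\boldsymbol{\rm M}_{X})$, the coefficients of $\tilde{B}$ are all strictly less than $1$; this makes $(X,B,\boldsymbol{\rm M})$ generalized klt. Finally select a rational $t>1$ close to $1$ and an ample $\mathbb{Q}$-divisor $A$ arranging $K_{X}+B+A+t\boldsymbol{\rm M}_{X}\sim_{\mathbb{Q}}\pi_{2}^{*}\mathcal{O}_{\mathbb{P}^{2}}(n)$; Cartierness of $D$ is forced by requiring integrality of the resulting coefficients, which is a mild constraint on $t$ and $n$.

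The main obstacle is the simultaneous balancing of three constraints: (i) the nefness of $\boldsymbol{\rm M}_{\tilde{X}}$ together with the prescribed negative fibre coefficient of its pushforward $\boldsymbol{\rm M}_{X}$, which is delicate because nefness is a global condition sensitive to the geometry of the exceptional locus; (ii) the generalized klt condition for $(X,B,\boldsymbol{\rm M})$, which constrains the exceptional discrepancies appearing in $\tilde{B}$; and (iii) the ampleness of $A$. The requirement $t>1$ is essential: at $t=1$ the divisor $D=K_{X}+B+A+\boldsymbol{\rm M}_{X}$ pulls back under $f$ to a combination falling within the framework of the (widely expected) Kodaira-type vanishing for the generalized klt pair $(X,B,\boldsymbol{\rm M})$ on the model $\tilde{X}$ where $\boldsymbol{\rm M}$ genuinely descends to a nef divisor, so vanishing should hold; the cohomological failure manifests only once the nef part is amplified beyond the regime in which $(X,B,t\boldsymbol{\rm M})$ remains generalized lc.
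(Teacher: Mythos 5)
Your approach has a fatal numerical gap, and it occurs exactly at the point you flag as ``the technical heart.'' On $X=E\times\mathbb{P}^{2}$ (or any modification that is an isomorphism near a general elliptic fibre $e$ of $\pi_{2}$), the trace of a b-nef b-divisor can never be negative on $e$. Indeed, if $\boldsymbol{\rm M}=\overline{\boldsymbol{\rm M}_{\tilde{X}}}$ with $\boldsymbol{\rm M}_{\tilde{X}}$ nef and $f\colon\tilde{X}\to X$, the negativity lemma gives $f^{*}\boldsymbol{\rm M}_{X}=\boldsymbol{\rm M}_{\tilde{X}}+F$ with $F\geq0$ and $f$-exceptional; choosing $e$ general in its covering family so that its strict transform $\tilde{e}$ is not contained in ${\rm Supp}\,F$, the projection formula yields $\boldsymbol{\rm M}_{X}\cdot e=(\boldsymbol{\rm M}_{\tilde{X}}+F)\cdot\tilde{e}\geq0$. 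Since $K_{X}\cdot e=0$, $B\cdot e\geq0$, $A\cdot e>0$, and $D\cdot e=\pi_{2}^{*}\mathcal{O}(n)\cdot e=0$, the required relation $D\sim_{\mathbb{Q}}K_{X}+B+A+t\boldsymbol{\rm M}_{X}$ forces $t\,\boldsymbol{\rm M}_{X}\cdot e=-(B+A)\cdot e<0$, a contradiction. So no choice of blow-up, of nef $\boldsymbol{\rm M}_{\tilde{X}}$, of $B$, $A$, $t$, or $n$ can make your balancing act work: the negativity you need on $\boldsymbol{\rm M}_{X}$ cannot live on a covering family of curves, only on rigid ones.

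This is precisely what the paper's construction is engineered around, and why its $X$ is not a product. There one takes the $\mathbb{P}^{1}$-bundle $Y=\mathbb{P}_{V\times W}(\mathcal{O}\oplus\mathcal{O}(-p_{V}^{*}H_{V}-p_{W}^{*}H_{W}))$ over $\mathbb{P}^{1}\times W$ ($W$ elliptic) and \emph{contracts} the negative section $T\cong\mathbb{P}^{1}\times W$ onto the curve $W$ via $\pi\colon Y\to X$; the b-divisor descends to $Y$, and its trace $\boldsymbol{\rm M}_{X}$ is negative only on the single rigid curve $\pi(T)\cong W$, which is compatible with b-nefness. The non-vanishing of $H^{1}$ is then not a Künneth computation on a product but is extracted through the exceptional divisor: $H^{1}(X,\mathcal{O}_{X}(-lK_{X}))\cong H^{1}(Y,\mathcal{O}_{Y}(-lK_{Y}+lT))$ surjects onto $H^{1}(T,\mathcal{O}_{T}((-lK_{Y}+lT)|_{T}))\cong H^{1}(W,\mathcal{O}_{W}(-2lH_{W}))\cong H^{0}(W,\mathcal{O}_{W}(2lH_{W}))$, which grows with $l$. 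To repair your argument you would have to replace the product by a model on which the anti-canonical class itself supplies the big and semi-ample part (so that $A+B$ can be taken proportional to it and no covering family sees the negativity), which in effect reproduces the paper's construction. The closing heuristic about $t>1$ versus $t=1$ is also not a proof of anything and would need to be discarded.
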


\begin{proof}
We prove the theorem in several steps. 

\begin{step5}\label{step1-example}
We construct some varieties and divisors. 

We put $V=\mathbb{P}^{1}$, and fix a very ample Cartier divisor $H_{V}$ such that $\mathcal{O}_{V}(H_{V})=\mathcal{O}_{V}(1)$.  
Let $W$ be an elliptic curve, and fix a very ample Cartier divisor $H_{W}$ on $W$. 
We set $p_{V}\colon V\times W \to V$ and $p_{W}\colon V\times W \to W$ as  projections. 
The divisor $p_{V}^{*}H_{V}+p_{W}^{*}H_{W}$ is a very ample Cartier divisor on $V\times W$. 
We construct a $\mathbb{P}^{1}$-bundle
$$Y:=\mathbb{P}_{V \times W}(\mathcal{O}_{V \times W}\oplus \mathcal{O}_{V \times W}(-p_{V}^{*}H_{V}-p_{W}^{*}H_{W}))\overset{f}{\longrightarrow} V\times W.$$
Let $T$ be the unique section corresponding to $\mathcal{O}_{Y}(1)$. 
It is obvious that $(Y,T)$ is a log smooth lc pair.
We define a Cartier divisor $L$ on $Y$ by $L=T+f^{*}(p_{V}^{*}H_{V}+p_{W}^{*}H_{W})$. 
By construction, we have 
$$K_{V\times W}=p_{V}^{*}K_{V}+p_{W}^{*}K_{W}\sim -2p_{V}^{*}H_{V}.$$ 
From this, we see that $L$ is basepoint free and the relation
$$K_{Y}+T+L\sim -2f^{*}p_{V}^{*}H_{V}$$
holds. 
Let $Y \to Z$ be the contraction induced by $L$. 
Since $L+f^{*}p_{W}^{*}H_{W}$ is basepoint free, we get a contraction $\pi \colon Y \to X$  induced by $L+f^{*}p_{W}^{*}H_{W}$. 
By construction, these morphisms are isomorphisms outside $T$. 
By calculations of intersection numbers of curves contracted by $\pi$, we see that all curves contracted by $\pi$ are also contracted by $Y \to Z$. 
Thus, the induced birational map $X\dashrightarrow Z$ is a morphism. 
We denote $Y\to Z$ by $\phi$ and $X \to Z$ by $\psi$. Then $L=\phi^{*}\phi_{*}L=\pi^{*}\psi^{*}\phi_{*}L$. 
Therefore, $\pi_{*}L$ is a nef and big $\mathbb{Q}$-Cartier divisor on $X$ and $L=\pi^{*}\pi_{*}L$. 
Since $L+f^{*}p_{W}^{*}H_{W}=\pi^{*}\pi_{*}(L+f^{*}p_{W}^{*}H_{W})$, the divisor $\pi_{*}f^{*}p_{W}^{*}H_{W}$ is $\mathbb{Q}$-Cartier and $f^{*}p_{W}^{*}H_{W}=\pi^{*}\pi_{*}f^{*}p_{W}^{*}H_{W}$. 
Since $H_{W}$ is very ample, $\pi_{*}f^{*}p_{W}^{*}H_{W}$ is semi-ample and $\pi_{*}f^{*}p_{W}^{*}H_{W}$ induces a contraction $g \colon X\to W$ such that $g\circ \pi =p_{W}\circ f$. 
We put $H_{X}=\pi_{*}f^{*}p_{V}H_{V}$. 
Since $L=\pi^{*}\pi_{*}L$ and $f^{*}p_{W}^{*}H_{W}=\pi^{*}\pi_{*}f^{*}p_{W}^{*}H_{W}$, we see that $H_{X}$ is $\mathbb{Q}$-Cartier and $\pi^{*}H_{X}=T+f^{*}p_{V}^{*}H_{V}$. 

By these discussion, we obtain the following diagram 
 $$
\xymatrix{
&&Y\ar[dl]_{f}\ar[dr]^{\pi}\\
&V\times W \ar[dl]^{p_{V}}\ar[dr]_{p_{W}}&&X\ar[dl]^{g}\\
V&&W
}
$$
and divisors $H_{V}$, $H_{W}$, $T$, $L=T+f^{*}(p_{V}^{*}H_{V}+p_{W}^{*}H_{W})$, and $H_{X}=\pi_{*}f^{*}p_{V}^{*}H_{V}$ such that 
\begin{enumerate}[(1)]
\item \label{proof--thm--van-example-(1)}
$(Y,T)$ is a log smooth lc pair, 
\item \label{proof--thm--van-example-(2)}
$H_{V}$ and $H_{W}$ are very ample and $K_{V \times W}\sim -2p_{V}^{*}H_{V}$. 
\item \label{proof--thm--van-example-(3)}
$K_{Y}+T+L+2f^{*}p_{V}^{*}H_{V}\sim 0$,
\item \label{proof--thm--van-example-(4)}
$\pi_{*}L$ is a nef and big $\mathbb{Q}$-Cartier divisor on $X$ such that $\pi^{*}\pi_{*}L=L$, and 
\item \label{proof--thm--van-example-(5)}
$H_{X}$ is $\mathbb{Q}$-Cartier and $\pi^{*}H_{X}=T+f^{*}p_{V}^{*}H_{V}$. 
\end{enumerate}
\end{step5}

\begin{step5}\label{step2-example}
In this step, we study properties of $X$ constructed in Step \ref{step1-example}. 
More precisely, we show that $X$ is $\mathbb{Q}$-factorial and $\pi^{*}K_{X}=K_{Y}-T$. 

Pick any Weil divisor $D$ on $X$. 
Then $\pi^{-1}_{*}D$ is linearly equivalent to the sum of a multiple of $T$ and the pullback of a Cartier divisor $G$ on $V\times W$. 
Since $V=\mathbb{P}^{1}$, we can find an integer $\alpha$ and a Cartier divisor $G_{W}$ on $W$ such that $G \sim \alpha p_{V}^{*}H_{V}+p_{W}^{*}G_{W}$. 
Thus, we have
$$D=\pi_{*}\pi^{-1}_{*}D \sim \pi_{*}f^{*}G \sim \alpha \pi_{*}f^{*}p_{V}^{*}H_{V}+\pi_{*}f^{*}p_{W}^{*}G_{W}=\alpha H_{X}+g^{*}G_{W}.$$ 
Since $H_{X}$ is $\mathbb{Q}$-Cartier, $D$ is also $\mathbb{Q}$-Cartier. 
Therefore, $X$ is $\mathbb{Q}$-factorial.  

By (\ref{proof--thm--van-example-(3)}) in Step \ref{step1-example}, we have $K_{X}+\pi_{*}L+2H_{X}\sim 0$. 
The negativity lemma shows 
$$K_{Y}+T+L+2f^{*}p_{V}^{*}H_{V}=\pi^{*}(K_{X}+\pi_{*}L+2H_{X}).$$
By using (\ref{proof--thm--van-example-(4)}) and (\ref{proof--thm--van-example-(5)}) in Step \ref{step1-example}, we obtain $\pi^{*}K_{X}=K_{Y}-T$. 
\end{step5}

\begin{step5}\label{step3-example}
In this step, we prove ${\rm dim}H^{2}(Y, \mathcal{O}_{Y}(-mK_{Y}+(m-1)T))=0$ for all positive integers $m$. 

We fix a positive integer $m$.  
Put $h=p_{W}\circ f \colon Y\to W$. 
Applying Relay spectral sequence to $\mathcal{O}_{Y}(-mK_{Y}+(m-1)T)$ and $h\colon Y\to W$, 
we see that it is sufficient to prove the following facts.
\begin{itemize}
\item $H^{2}\bigl(W,h_{*}\mathcal{O}_{Y}(-mK_{Y}+(m-1)T)\bigr)=0$, 
\item $H^{1}\bigl(W,R^{1}h_{*}\mathcal{O}_{Y}(-mK_{Y}+(m-1)T)\bigr)=0,$ and
\item $H^{0}\bigl(W,R^{2}h_{*}\mathcal{O}_{Y}(-mK_{Y}+(m-1)T)\bigr)=0$. 
\end{itemize}
The first fact is clear because ${\rm dim}W=1$. 
We also see that 
\begin{equation*}
\begin{split}
-mK_{Y}+(m-1)T=&K_{Y}-(m+1)K_{Y}+(m-1)T\\
\sim&K_{Y}+(m+1)(T+L+2f^{*}p_{V}^{*}H_{V})+(m-1)T \quad (\text{(\ref{proof--thm--van-example-(3)}) in Step \ref{step2-example}})\\
=&K_{Y}+2mT+(m+1)L+(2m+2)f^{*}p_{V}^{*}H_{V}\\
=&K_{Y}+(3m+1)L+2f^{*}p^{*}_{V}H_{V}-2mf^{*}p_{W}^{*}H_{W},
\end{split}
\end{equation*}
where the final relation follows from $L=T+f^{*}(p_{V}^{*}H_{V}+p_{W}^{*}H_{W})$. 
Since $H_{V}$ is very ample and $L$ is nef and big, we can apply relative Kawamata--Viehweg vanishing theorem to the sheaf $\mathcal{O}_{Y}(-mK_{Y}+(m-1)T)$ and the morphism $Y\to W$. 
Thus, we see that 
$$R^{1}h_{*}\mathcal{O}_{Y}(-mK_{Y}+(m-1)T)=R^{2}h_{*}\mathcal{O}_{Y}(-mK_{Y}+(m-1)T)=0.$$
From this, the second and the third facts stated above follow. 
In this way, we obtain the equality ${\rm dim}H^{2}(Y, \mathcal{O}_{Y}(-mK_{Y}+(m-1)T))=0,$ which is what we wanted to prove in this step.  
\end{step5}

\begin{step5}\label{step4-example}
In this step, we prove
$${\rm dim}H^{1}(X,\mathcal{O}_{X}(-lK_{X}))\geq {\rm dim}H^{0}(W,\mathcal{O}_{W}(2lH_{W}))$$
for every positive integer $l$ such that $lK_{X}$ is Cartier. 

Since $X$ is $\mathbb{Q}$-factorial (see Step \ref{step2-example}) and $T$ is the unique $\pi$-exceptional divisor, $-T$ is ample over $X$. 
Since $\pi^{*}K_{X}=K_{Y}-T$ (Step \ref{step2-example}), it follows that $Y$ is Fano over $X$. 
Thus, we have $R^{i}\pi_{*}\mathcal{O}_{Y}=0$ for every $i>0$. 
By Leray spectral sequence, it follows that 
$${\rm dim}H^{1}(X,\mathcal{O}_{X}(-lK_{X}))={\rm dim}H^{1}(Y,\mathcal{O}_{Y}(-l\pi^{*}K_{X}))={\rm dim}H^{1}(Y,\mathcal{O}_{Y}(-lK_{Y}+lT)).$$
We consider the exact sequence
$$0\longrightarrow \mathcal{O}_{Y}(-lK_{Y}+(l-1)T)\longrightarrow \mathcal{O}_{Y}(-lK_{Y}+lT)\longrightarrow \mathcal{O}_{T}((-lK_{Y}+lT)|_{T})\longrightarrow 0.$$
Since we have $H^{2}(Y, \mathcal{O}_{Y}(-lK_{Y}+(l-1)T))=0$ by Step \ref{step3-example}, taking the cohomology long exact sequence, we obtain the exact sequence
\begin{equation*}\tag{$*$}\label{proof--thm--van-example-(***)}
\begin{split}
H^{1}(Y, \mathcal{O}_{Y}(-lK_{Y}+lT))&\longrightarrow H^{1}(T,\mathcal{O}_{T}((-lK_{Y}+lT)|_{T}))\longrightarrow 0.
\end{split}
\end{equation*}
Since $(T+f^{*}(p_{V}^{*}H_{V}+p_{W}^{*}H_{W}))|_{T}\sim0$, we have 
$$(-lK_{Y}+lT)|_{T}\sim -lK_{T}-2lf^{*}(p_{V}^{*}H_{V}+p_{W}^{*}H_{W})|_{T}.$$
By identifying $T$ with $V\times W$ and using $K_{V \times W}\sim -2p_{V}^{*}H_{V}$, we obtain
$${\rm dim}H^{1}\bigl(T,\mathcal{O}_{T}((-lK_{Y}+lT)|_{T})\bigr)={\rm dim}H^{1}\bigl(V\times W,\mathcal{O}_{V \times W}(-2lp_{W}^{*}H_{W})\bigr).$$
Since $V=\mathbb{P}^{1}$, it is easy to check that $R^{i}p_{W*}\mathcal{O}_{V \times W}=0$ for every $i>0$. 
By applying Leray spectral sequence to $p_{W}\colon V \times W \to W$, we have
$${\rm dim}H^{1}\bigl(V\times W,\mathcal{O}_{V \times W}(-2lp_{W}^{*}H_{W})\bigr)={\rm dim}H^{1}(W,\mathcal{O}_{W}(-2lH_{W})).$$
Since $W$ is an elliptic curve, by Serre duality, we have
$${\rm dim}H^{1}(W,\mathcal{O}_{W}(-2lH_{W}))={\rm dim}H^{0}(W,\mathcal{O}_{W}(2lH_{W})).$$
Combining these relations, we obtain
\begin{equation*}
\begin{split}
{\rm dim}H^{1}(X,\mathcal{O}_{X}(-lK_{X}))=&{\rm dim}H^{1}(Y,\mathcal{O}_{Y}(-lK_{Y}+lT))\\
\geq&{\rm dim}H^{1}\bigl(T,\mathcal{O}_{T}((-lK_{Y}+lT)|_{T})\bigr)\qquad\quad (\text{by (\ref{proof--thm--van-example-(***)})})\\
=&{\rm dim}H^{1}\bigl(V\times W,\mathcal{O}_{V \times W}(-2lp_{W}^{*}H_{W})\bigr)\\
=&{\rm dim}H^{1}(W,\mathcal{O}_{W}(-2lH_{W}))\\
=&{\rm dim}H^{0}(W,\mathcal{O}_{W}(2lH_{W})).
\end{split}
\end{equation*} 
We finish this step.
\end{step5}

\begin{step5}\label{step5-example}
With this step we will finish the proof of Theorem \ref{thm--van-example}. 
In other words, for any positive integer $d$, we find a structure of generalized klt pair $(X,B,\boldsymbol{\rm M})$ on $X$, ample $\mathbb{Q}$-divisor $A$, and a Cartier divisor $D$ satisfying all the conditions of Theorem \ref{thm--van-example}. 

We fix a positive integer $d$. 
Pick any positive integer $l$ such that $lK_{X}$ is Cartier, $K_{X}+(l+1)\pi_{*}L$ is big, and ${\rm dim}H^{0}(W,\mathcal{O}_{W}(2lH_{W}))\geq d$. 
We can find such $l$ because $H_{W}$ is very ample and $\pi_{*}L$ is big ((\ref{proof--thm--van-example-(4)}) in Step \ref{step1-example}). 
By Step \ref{step2-example}, the pair $(X,0)$ is a $\mathbb{Q}$-factorial klt pair. 
Since $\pi_{*}L$ is nef and big ((\ref{proof--thm--van-example-(4)}) in Step \ref{step1-example}), we can find an effective $\mathbb{Q}$-divisor $B$ and an ample $\mathbb{Q}$-divisor $A$ on $X$ such that $(X,B+A)$ is klt and
$(l+1)\pi_{*}L\sim_{\mathbb{Q}}A+B.$
We define a b-nef $\mathbb{Q}$-b-Cartier $\mathbb{Q}$-b-divisor $\boldsymbol{\rm N}$ by the closure of $2f^{*}p_{V}^{*}H_{V}$. 
Since $(X,B+A)$ is $\mathbb{Q}$-factorial klt, we can find a rational number $u\in (0,1]$ such that the generalized pair $(X,B,u\boldsymbol{\rm N})$ is generalized klt. 
We put $\boldsymbol{\rm M}=u\boldsymbol{\rm N}$ and $D=-lK_{X}$. 

We show that $(X,B,\boldsymbol{\rm M})$, $A$ and $D$ satisfy all the conditions of Theorem \ref{thm--van-example}. 
Clearly ${\rm dim}X=3$, and $X$ is $\mathbb{Q}$-factorial by Step \ref{step2-example}. 
Since $K_{X}+A+B\sim_{\mathbb{Q}}K_{X}+(l+1)\pi_{*}L$, the divisor $K_{X}+B+A$ is big. 
Therefore, the first condition of Theorem \ref{thm--van-example} holds true. 
By (\ref{proof--thm--van-example-(3)}) in Step \ref{step1-example}, we have
$K_{X}+\pi_{*}L+\boldsymbol{\rm N}_{X}\sim 0$. 
Since $\boldsymbol{\rm M}=u\boldsymbol{\rm N}$ and $D=-lK_{X}$, we obtain
\begin{equation*}
\begin{split}
D=K_{X}-(l+1)K_{X}\sim_{\mathbb{Q}}K_{X}+B+A+\frac{l+1}{u}\boldsymbol{\rm M}_{X}
\end{split}
\end{equation*} 
Since $u\in (0,1]$, we have $\frac{l+1}{u}>1$. 
Thus, the second condition of Theorem \ref{thm--van-example} holds. 
Finally, Since $D=-lK_{X}$, by Step \ref{step4-example} and our choice of $l$, we have
$${\rm dim}H^{1}(X,\mathcal{O}_{X}(D))\geq {\rm dim}H^{0}(W,\mathcal{O}_{W}(2lH_{W}))\geq d.$$
Therefore, the third condition of Theorem \ref{thm--van-example} holds. 
\end{step5}
We complete the proof of Theorem \ref{thm--van-example}. 
\end{proof}

\begin{proof}[Proof of Theorem \ref{thm--intro-van-example}]
It is clear from Theorem \ref{thm--van-example}. 
\end{proof}

\section{Appendix. On non-$\mathbb{R}$-Cartier analogue of non-vanishing theorem}\label{sec5}

In this appendix, we give a small remark on the following non-$\mathbb{R}$-Cartier analogue of Theorem \ref{thm--abund-main-2} (\ref{thm--abund-main-2-(2)}).   

\begin{ques}[{cf.~Theorem \ref{thm--abund-main-2}}]\label{ques--gen-non-vanishing-3}
Let $(X,B,\boldsymbol{\rm M})$ be a projective generalized lc pair such that $B$ is a $\mathbb{Q}$-divisor and $\boldsymbol{\rm M}$ is a $\mathbb{Q}$-b-Cartier $\mathbb{Q}$-b-divisor. 
Let $A$ be an ample $\mathbb{Q}$-divisor on $X$. 
Suppose that $K_{X}+B+A+\boldsymbol{\rm M}_{X}$ is pseudo-effective. 

Then, is there an effective $\mathbb{Q}$-divisor $D_{\alpha}$ on $X$ such that $K_{X}+B+A+\alpha \boldsymbol{\rm M}_{X} \sim_{\mathbb{Q}}D_{\alpha}$ for every rational number $\alpha > 1$?
\end{ques}

Theorem \ref{thm--non-vanishing-lc-divisor-2} shows that the statement holds in the case where $K_{X}+B+A+(1-t)\boldsymbol{\rm M}_{X}$ is pseudo-effective in the sense of Definition \ref{defn--positivity-divisor} for a real number $t>0$. 
In particular, the statement holds when $\boldsymbol{\rm M}$ descends to a numerically trivial $\mathbb{Q}$-Cartier divisor. 
Indeed, if $\boldsymbol{\rm M}$ descends to a numerically trivial $\mathbb{Q}$-Cartier divisor, then there is a resolution $f\colon \tilde{X} \to X$ such that the divisor
$f^{*}(K_{X}+B+A+\boldsymbol{\rm M}_{X})-\boldsymbol{\rm M}_{\tilde{X}}$ is pseudo-effective, so $K_{X}+B+A$ is pseudo-effective in the sense of Definition \ref{defn--positivity-divisor}. 

If we can solve Question \ref{ques--gen-non-vanishing-3} affirmatively, there is an application to anti-nef canonical divisors of klt varieties.
The author learned the following question in a discussion with Gongyo.

\begin{ques}[Non-vanishing for nef anti-canonical divisors]\label{conj--non-van-antinef-klt}
Let $(X,0)$ be a projective klt pair.
Suppose that $-K_{X}$ is nef. 
In this situation, is there an effective $\mathbb{Q}$-divisor $D$ on $X$ such that $-K_{X}\sim_{\mathbb{Q}}D$?    
\end{ques}

\begin{thm}\label{thm--non-van-antinef-klt}
Assume that the statement of Question \ref{ques--gen-non-vanishing-3} holds for all generalized lc pairs and ample $\mathbb{Q}$-divisors. 
Then, Question \ref{conj--non-van-antinef-klt} can be solved affirmatively. 
\end{thm}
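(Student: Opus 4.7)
The plan is to apply the hypothesized affirmative answer to Question \ref{ques--gen-non-vanishing-3} to a generalized klt pair built directly out of $-K_X$. Given $(X,0)$ projective klt with $-K_X$ nef, I would first set $\boldsymbol{\rm M}:=\overline{-K_X}$. Since $K_X$ is $\mathbb{Q}$-Cartier (because $(X,0)$ is klt) and $-K_X$ is nef by hypothesis, $\boldsymbol{\rm M}$ is a b-nef $\mathbb{Q}$-b-Cartier $\mathbb{Q}$-b-divisor. A short log discrepancy computation on a log resolution $f\colon \tilde{X}\to X$ shows that the generalized log discrepancy of any prime divisor $E$ over $X$ equals $1+a(E,X,0)>1$, so $(X,0,\boldsymbol{\rm M})$ is a generalized klt pair.

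Next, I would fix an ample $\mathbb{Q}$-divisor $A$ on $X$. The divisor $K_X+0+A+\boldsymbol{\rm M}_X=A$ is ample, hence pseudo-effective, so the hypothesis of Question \ref{ques--gen-non-vanishing-3} is trivially satisfied. Its assumed conclusion produces, for each rational $\alpha>1$, an effective $\mathbb{Q}$-divisor $D_{\alpha}$ with
\begin{equation*}
K_X+A+\alpha\boldsymbol{\rm M}_X\sim_{\mathbb{Q}}D_{\alpha},
\end{equation*}
which rewrites as $(\alpha-1)(-K_X)+A\sim_{\mathbb{Q}}D_{\alpha}$. Rescaling by $1/(\alpha-1)$, this yields that for every rational $\epsilon>0$ the divisor $-K_X+\epsilon A$ is $\mathbb{Q}$-linearly equivalent to an effective $\mathbb{Q}$-divisor.

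The main obstacle is the last step: upgrading this family of perturbed effectivities to a single effective $\mathbb{Q}$-divisor $\mathbb{Q}$-linearly equivalent to $-K_X$ itself. The conclusion extracted above only asserts pseudo-effectivity of $-K_X$ in the sense of Definition \ref{defn--positivity-divisor}, which by Lemma \ref{lem--equiv-positivity} coincides with usual pseudo-effectivity (as $-K_X$ is $\mathbb{Q}$-Cartier); this was already known from nefness and does not produce effectivity. To close the gap I would instead apply Question \ref{ques--gen-non-vanishing-3} to a generalized pair whose trace $\boldsymbol{\rm M}_X$ is \emph{not} $\mathbb{Q}$-Cartier, chosen so that, for some specific $\alpha>1$, the $\mathbb{Q}$-linear equivalence class of $K_X+B+A+\alpha\boldsymbol{\rm M}_X$ coincides with the class of $-K_X$ directly, thereby yielding an effective representative in that class. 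The extra room afforded by non-$\mathbb{Q}$-Cartier $\boldsymbol{\rm M}_X$—precisely the feature that distinguishes Question \ref{ques--gen-non-vanishing-3} from the already-proven Theorem \ref{thm--abund-main-2} (\ref{thm--abund-main-2-(2)})—is the key ingredient, and constructing a suitable non-$\mathbb{Q}$-Cartier setup, plausibly via a non-$\mathbb{Q}$-factorial birational model of $X$ on which nef $\mathbb{Q}$-b-Cartier b-divisors with non-$\mathbb{Q}$-Cartier trace on $X$ are available, is the step I expect to require the most care.
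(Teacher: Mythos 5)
Your diagnosis of the first half is exactly right and matches the paper's starting point: applying Question \ref{ques--gen-non-vanishing-3} to $(X,0,\overline{-K_{X}})$ only yields effectivity of $(\alpha-1)(-K_{X})+A$, i.e.\ pseudo-effectivity of $-K_{X}$, which is already known from nefness. But the proposal stops precisely where the proof begins: the construction of a generalized pair for which the ample term is absorbed is not carried out, and it is the entire content of the theorem. Moreover, the route you suggest — a non-$\mathbb{Q}$-factorial \emph{birational} model of $X$ — cannot work. On any birational model the same bookkeeping forces you to cancel $K+B+A$ against $-\alpha\boldsymbol{\rm M}$, and since $A$ is ample and $B\geq 0$ you can never arrange $K+B+A\sim_{\mathbb{Q}}-\boldsymbol{\rm M}$ (which is what you would need to make $K+B+A+\alpha\boldsymbol{\rm M}$ land in the class of a positive multiple of $-K_{X}$). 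The paper instead \emph{increases the dimension}: it forms the $\mathbb{P}^{1}$-bundle $Y=\mathbb{P}_{X}(\mathcal{O}_{X}\oplus\mathcal{O}_{X}(-H))\to X$, contracts the distinguished section $T$ by the semi-ample divisor $T+f^{*}H$ to obtain a cone $\pi\colon Y\to Z$ with an ample $A_{Z}$ satisfying $T+f^{*}H\sim\pi^{*}A_{Z}$, and takes $\boldsymbol{\rm M}=\overline{-f^{*}K_{X}}$. The cone adjunction gives $K_{Z}+A_{Z}+\boldsymbol{\rm M}_{Z}\sim_{\mathbb{Q}}0$ exactly, so the ample divisor is absorbed for free, the trace $\boldsymbol{\rm M}_{Z}=-\pi_{*}f^{*}K_{X}$ is genuinely non-$\mathbb{Q}$-Cartier at the vertex, and applying the question with $\alpha=2$ yields an effective $D_{Z}\sim_{\mathbb{Q}}\boldsymbol{\rm M}_{Z}$.

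Even granting that construction, a second substantive step is missing from your sketch: effectivity of $\boldsymbol{\rm M}_{Z}$ on $Z$ does not immediately give effectivity of $-K_{X}$ on $X$, because a section of $-m\pi_{*}f^{*}K_{X}$ only lifts to a section of $-mf^{*}K_{X}+nT$ on $Y$ for some $n>0$. The paper removes the $T$-part by a descending induction on $n$, using the restriction exact sequences to $T\simeq X$ together with $H^{0}(X,\mathcal{O}_{X}(-mK_{X}-n'H))=0$ for $n'>0$ — which is where the reduction to the case that $-K_{X}$ is not big is used. Both the cone construction and this descent argument are essential and absent, so the proposal as it stands has a genuine gap rather than an alternative proof.
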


\begin{proof}
Assume that the statement of Question \ref{ques--gen-non-vanishing-3} holds for all generalized lc pairs and ample $\mathbb{Q}$-divisors. 
Let $(X,0)$ be a projective klt pair such that $-K_{X}$ is nef. 
We may assume that $-K_{X}$ is not big because otherwise the non-vanishing for $-K_{X}$ is obvious. 

Fix a very ample Cartier divisor $H$ on $X$, and consider the $\mathbb{P}^{1}$-bundle 
$$Y:=\mathbb{P}_{X}(\mathcal{O}_{X}\otimes \mathcal{O}_{X}(-H))\overset{f}{\longrightarrow} X.$$
We put $T$ as the unique section of $\mathcal{O}_{Y}(1)$. 
By construction, $(Y,T)$ is plt, $T+f^{*}H$ is basepoint free, and we have $K_{Y}+T+(T+f^{*}H)\sim K_{X}$. 
Let $\pi\colon Y \to Z$ be the contraction induced by $T+f^{*}H$. 
Then, $\pi$ is birational and $T$ is the unique $\pi$-exceptional divisor.  
We may find an ample Cartier divisor $A_{Z}$ on $Z$ such that $T+f^{*}H\sim \pi^{*}A_{Z}$. 
We define a $\mathbb{Q}$-b-divisor $\boldsymbol{\rm M}$ on $X$ by the closure of $-f^{*}K_{X}$. 
Since $K_{Y}+T+\pi^{*}A_{Z}+\boldsymbol{\rm M}_{Y}\sim_{\mathbb{Q}}0$ and $-K_{X}$ is nef, we see that 
$(Z,0,\boldsymbol{\rm M})$ is a generalized lc pair such that $K_{Z}+\boldsymbol{\rm M}_{Z}+A_{Z}\sim_{\mathbb{Q}}0$. 

By applying the statement of Question \ref{ques--gen-non-vanishing-3} to $(Z,0,\boldsymbol{\rm M})$ and $A_{Z}$, we can find an effective $\mathbb{Q}$-divisor $D_{Z}$ on $Z$ such that 
$$-\pi_{*}f^{*}K_{X}=\boldsymbol{\rm M}_{Z}\sim_{\mathbb{Q}}K_{Z}+A_{Z}+2\boldsymbol{\rm M}_{Z}\sim_{\mathbb{Q}}D_{Z}.$$
Pick an integer $m>0$ so that $-m\pi_{*}f^{*}K_{X} \sim mD_{Z}$. 
Since $T$ is the unique $\pi$-exceptional divisor, we can find a positive integer $n$ such that $H^{0}(Y, \mathcal{O}_{Y}(-mf^{*}K_{X}+nT))\neq 0$. 
For each integer $n'\leq n$, we consider the exact sequence
\begin{equation*}
\begin{split}
0\longrightarrow H^{0}(Y, \mathcal{O}_{Y}(-mf^{*}K_{X}+(n'-1)T)) &\longrightarrow  H^{0}(Y, \mathcal{O}_{Y}(-mf^{*}K_{X}+n'T))\\
& \longrightarrow H^{0}(T, \mathcal{O}_{T}((-mf^{*}K_{X}+n'T)|_{T}))
\end{split}
\end{equation*}
which is induced by the exact sequence
\begin{equation*}
\begin{split}
0\longrightarrow \mathcal{O}_{Y}(-mf^{*}K_{X}+(n'-1)T) &\longrightarrow \mathcal{O}_{Y}(-mf^{*}K_{X}+n'T)\\
& \longrightarrow \mathcal{O}_{T}((-mf^{*}K_{X}+n'T)|_{T}) \longrightarrow 0.
\end{split}
\end{equation*}
By using $(T+f^{*}H)|_{T}\sim 0$ and $T\simeq X$, we have
$$H^{0}(T, \mathcal{O}_{T}((-mf^{*}K_{X}+n'T)|_{T}))\simeq H^{0}(X, \mathcal{O}_{X}(-m K_{X}-n'H))$$
If $n'>0$, then $H^{0}(X, \mathcal{O}_{X}(-m K_{X}-n'H))=0$ because otherwise $-K_{X}$ is big which contradicts our assumption. 
Therefore, for every $0<n'\leq n$ we have
$$H^{0}(Y, \mathcal{O}_{Y}(-mf^{*}K_{X}+(n'-1)T)) \simeq  H^{0}(Y, \mathcal{O}_{Y}(-mf^{*}K_{X}+n'T)).$$
Since $H^{0}(Y, \mathcal{O}_{Y}(-mf^{*}K_{X}+nT))\neq 0$, by an descending induction on $n'$, we obtain
$$H^{0}(Y, -mf^{*}K_{X})\neq 0.$$ 
From this, we can find an effective $\mathbb{Q}$-divisor $D$ on $X$ such that $D\sim_{\mathbb{Q}}-K_{X}$. 
We finish the proof of Theorem \ref{thm--non-van-antinef-klt}. 
\end{proof}


\end{document}